\newtheorem{thm}{Theorem}[section]
\newcommand{\newaliasthm}[2]{%
  \newaliascnt{#1}{thm}%
  \newtheorem{#1}[#1]{#2}%
  \aliascntresetthe{#1}%
  \crefname{#1}{#2}{#2s}%
}
\crefname{thm}{Theorem}{Theorems}
\newcommand{\eps}{\varepsilon}
\renewcommand{\phi}{\varphi}
\newcommand{\RR}{\mathbb R}
\newcommand{\FF}{\mathbb F}
\newcommand{\CC}{\mathbb C}
\newcommand{\NN}{\mathbb N}
\newcommand{\vspan}[1]{\left \langle #1 \right \rangle}
\newcommand{\vspann}[2]{\left \langle #1 \, \| \, #2 \right \rangle}
\newcommand{\set}[1]{ \left \{ #1 \right \} }
\newcommand{\sett}[2]{ \left\{ #1 \, \, || \, \, #2 \right \} }
\newcommand{\one}{\mathbf 1}
\newcommand{\zero}{\mathbf 0}
\newcommand{\floor}[1]{\left \lfloor #1 \right \rfloor}
\newcommand{\ceil}[1]{\left \lceil #1 \right \rceil}
\newcommand{\ma}{\mathcal A}
\newcommand{\mb}{\mathcal B}
\newcommand{\me}{\mathcal E}
\newcommand{\mf}{\mathcal F}
\newcommand{\mi}{\textnormal I}
\newcommand{\mm}{\mathcal M}
\newcommand{\mn}{\mathcal N}
\newcommand{\mo}{\mathcal O}
\renewcommand{\mp}{\mathcal P}
\newcommand{\mq}{\mathcal Q}
\newcommand{\ms}{\mathcal S}
\newcommand{\fs}{\mathfrak S}
\newcommand{\ft}{\mathfrak T}
\newcommand{\BP}{\boldsymbol P}
\newcommand{\BQ}{\boldsymbol Q}
\newcommand{\fq}{\FF_q}
\renewcommand{\th}{\textsuperscript{th} }
 \DeclareMathOperator{\supp}{supp}
 \DeclareMathOperator{\wt}{wt}
 \DeclareMathOperator{\PG}{PG}
    \newcommand{\pg}{\PG}
 \DeclareMathOperator{\PGL}{PGL}
    \newcommand{\pgl}{\PGL}
 \DeclareMathOperator{\PGamL}{P\Gamma L}
 \DeclareMathOperator{\GL}{GL}
 \DeclareMathOperator{\proj}{proj}
 \DeclareMathOperator{\Cay}{Cay}
 \renewcommand{\hom}[1]{\FF_q[X,Y]_{#1}}
 \renewcommand{\leq}{\leqslant}
 \renewcommand{\geq}{\geqslant}
 \newcommand{\fqi}{\fq \cup \set \infty}
\title{Intersection problems for linear codes and polynomials over finite fields}
\author{Sam Adriaensen\thanks{Department of Mathematics and Data Science, Vrije Universiteit Brussel, Pleinlaan 2, 1050 Elsene, Belgium. \href{mailto:sam.adriaensen@vub.be}{sam.adriaensen@vub.be}.
The author is supported by a postdoctoral fellowship 12A3Y25N from the Research Foundation Flanders (FWO).}}
\date{}
\begin{document}

\maketitle

\begin{abstract}
 This paper proves a stability result for a variation of the Erd\H os-Ko-Rado theorem in the context of polynomials over finite fields.
 Let $\mathcal F$ be a family of polynomials of degree at most $k \geq 3$ in $\mathbb F_q[X]$.
 Call $\mathcal F$ \emph{intersecting} if for any two polynomials $f, g$ in $\mathcal F$, there exists a point $x \in \mathbb F_q$ for which $f(x) = g(x)$.
 An intersecting family is called a \emph{star} if it consists of all polynomials $f$ with $\deg f \leq k$ such that $f(x) = y$ for some fixed points $x, y \in \mathbb F_q$.
 In this paper we prove that if $\mathcal F$ is an intersecting family with $|\mathcal F| \geq \frac 1{\sqrt 2} q^k + \mathcal O(q^{k-1})$, then $\mathcal F$ is contained in a star.
 In fact, we prove that this is still true if we also evaluate the polynomials ``at infinity'', which is equivalent to studying the problem for homogeneous bivariate polynomials.

The proof technique extends to a general framework for intersection problems of linear codes $C$.
One has to investigate the geometry of the projective system $\mathcal S$ associated to $C$.
If the hyperplanes that don't intersect $\mathcal S$ are well spread out with respect to the points not on $\mathcal S$, then one obtains  stability results, showing that any intersecting family of reasonably large size is contained in a star.
\end{abstract}

\paragraph{Keywords:} Intersection problems, Erd\H os-Ko-Rado theorem, Finite geometry, Linear codes, Projective systems, Normal rational curves, Polynomials, Spectral graph theory.

\paragraph{MSC:}
05D05, 
11T06, 
51E22, 
05C50, 
05E30. 

\section{Introduction}

The main contribution of this paper is \Cref{Thm:Main}, which proves a stability theorem for intersection problems concerning polynomials over finite fields.
In the first part of the Introduction, we introduce a framework for intersection problems.
In the second part, we summarise the known results concerning intersection problems for polynomials over finite fields, and present our main result.
The range of the proof strategy extends beyond polynomials over finite fields.
We can represent the problem as dealing with linear codes.
In the third part of the Introduction, we explain the framework of intersection problems for linear codes, and present helpful tools to characterise large intersecting families.
\Cref{Thm:Main} will eventually be proved by an application of these linear code-related tools.
In the fourth part, we give an overview of the proof strategy and the paper.

\subsection{General intersection problems}

In their seminal paper, Erd\H os, Ko and Rado \cite{EKR} proved that if $\mf$ is a family of subsets of size $k$ of an $n$-element set $\mp$, with $n > 2k$, then $|\mf| \leq \binom{n-1}{k-1}$, with equality if and only $\mf$ consists of all $k$-element subsets through a fixed point of $\mp$.
This consequently inspired its own branch in combinatorics, which we could denominate \emph{intersection problems}.
Much research is still being done.
See e.g.\ \cite{Ellis, Kupavskii:Zakharov, Tanaka:Tokushige, Trees, Bulavka:Woodroofe, Salia:Toth} for some recent progress.

In this paper, we will study intersection problems in the following framework.
We start with an \emph{incidence structure} $(\mp,\mb,\mi)$.
This means that $\mp$ and $\mb$ are sets (in our case always finite), and $\mi \subseteq \mp \times \mb$ is a relation.
We call the elements of $\mp$ \emph{points}, the elements of $\mb$ \emph{blocks}, and we call $\mi$ the \emph{incidence relation}.
We write $P \mi B$ as a shorthand for $(P,B) \in \mi$, and we say that $P$ is on $B$, or $B$ goes through $P$, or $P$ is incident with $B$.

We call two blocks $B_1$ and $B_2$ \emph{$t$-intersecting} if there are at least $t$ points incident with both $B_1$ and $B_2$.
A family $\mf \subseteq \mb$ of blocks is said to be \emph{$t$-intersecting} if any two blocks contained in $\mf$ are $t$-intersecting.
If $t=1$, we simply say \emph{intersecting}.
A \emph{$t$-star} (also called a  \emph{canonical} or \emph{trivially $t$-intersecting family} by other authors) is a non-empty $t$-intersecting family of the form $\sett{B \in \mb}{P_1, \dots, P_t \mi B}$,
with $P_1, \dots, P_t$ distinct points.
A $1$-star is simply called a \emph{star}.
The question arises whether the largest $t$-intersecting families are $t$-stars, and if so, how large can a $t$-intersecting family be that is not contained in a $t$-star?

\subsection{Intersection problems for polynomials}

We will investigate intersecting families in the context of polynomials over finite fields of bounded degree.
Let $q$ be a prime power, $\fq$ the finite field of order $q$, let $k < q$ be an integer, and let $\fq[X]_{\leq k}$ denote the set of polynomials in $\fq[X]$ of degree at most $k$.

One possible definition for two polynomials $f$ and $g$ to be $t$-intersecting is that $\deg(\gcd(f,g)) \geq t$.
This approach was taken in \cite{Salia:Toth}, but we will not expand on it here.

Instead, we say that polynomials $f$ and $g$ are $t$-intersecting if $f(X)=g(X)$ has at least $t$ solutions in $\fq$.
Note that this implies that if $f \neq g$, then $\deg(f-g) \geq t$.
We easily obtain the following result.

\begin{res}[{\cite[Lemma 6.1]{A-3-EKR}}]
 \label{Res:Weak EKR}
 Suppose that $t \leq k < q$.
 A $t$-intersecting family in $\fq[X]_{\leq k}$ contains at most $q^{k+1-t}$ polynomials.
\end{res}

Note that the above bound is attained by the $t$-stars.
Moreover, some stability results are known for large intersecting families in $\fq[X]_{\leq k}$.

\begin{res}
 \label{Res:Uni Pol}
 Suppose that $\mf$ is an intersecting family in $\fq[X]_{\leq k}$ with $2 \leq k < q$.
 \begin{enumerate}
  \item \label{Res:Uni Pol:1} \textnormal{\cite{A-3-EKR}} Then $|\mf| \leq q^k$, with equality if and only if $\mf$ is a star.
  \item \label{Res:Uni Pol:2} \textnormal{\cite{A3-2022-Stability}} If $k=2$, and $|\mf| \geq \frac 1 {\sqrt 2} q^2 + 2 \sqrt 2 q + 8$, then $\mf$ is contained in a star.
 \item \label{Res:Uni Pol:3} \textnormal{\cite{Aguglia:Csajbok:Weiner}} Suppose that $q \geq 8$ if $q$ is even, and $q \geq 53$ if $q$ is odd. If $|\mf| \geq q^k - q^{k-1}$, then $\mf$ is contained in a star.
 \end{enumerate}
\end{res}

The proofs of \Cref{Res:Uni Pol} (\ref{Res:Uni Pol:1}, \ref{Res:Uni Pol:3}) rely on proving the result for the case $k=2$, and then applying induction.
However, the problem becomes more intricate if we also evaluate the polynomials ``at infinity''.

\begin{df}
 \label{Df:infty}
We define for a polynomial $f(X) = \sum_{i=0}^k a_i X^i$ of degree at most $k$ that $f(\infty_k) = a_k$.
If $k$ is clear from context, we denote $\infty_k$ as $\infty$.
\end{df}

One can think of evaluating $f$ on $\fqi$ as evaluating $f$ on the projective line $\pg(1,q)$.
Indeed, if we define $F(X,Y) = Y^k f(X/Y) = \sum_{i=0}^k a_i X^i Y^{k-i}$, then $F(X,1) = f(X)$ and $F(1,0) = f(\infty)$.
Hence, two polynomials $f,g \in \fq[X]_{\leq k}$ are $t$-intersecting on $\fqi$ if and only if there are $t$ points $\vspan{(x_1,y_1)}, \dots, \vspan{(x_t,y_t)}$ of $\pg(1,q)$ where $Y^k f(X/Y)$ and $Y^k g(X/Y)$ agree.

We denote by $\hom k$ the set of homogeneous polynomials in $\fq[X,Y]$ of degree $k$ (including the zero polynomial), and make the following convention.

\begin{df}
 \label{Df:Pol int}
\begin{enumerate}
 \item When we say that $f,g \in \fq[X]_{\leq k}$ are \emph{$t$-intersecting} if $f(X) = g(X)$ has at least $t$ solutions in $\fq$.
 This corresponds to the incidence structure $(\fq^2, \fq[X]_{\leq k}, \mi)$, with $\mi$ the natural incidence relation:
 \[
  \mi = \sett{((x,y),f) \in \fq^2 \times \fq[X]_{\leq k}}{f(x) = y}.
 \]
 \item We say that $F, G \in \hom k$ are \emph{$t$-intersecting} if $F(X,Y) = G(X,Y)$ has at least $t$ pairwise linearly independent solutions on $\fq^2 \setminus \set \zero$.
 Equivalently, if $f(X) = F(X,1)$ and $g(X) = G(X,1)$, then $f(X) = g(X)$ has at least $t$ solutions on $\fqi$.
 Hence, this corresponds to the incidence structure $((\fq \cup \{\infty_k\})\times \fq, \fq[X]_{\leq k}, \mi)$, with natural incidence.
\end{enumerate}
\end{df}
A $t$-intersecting family in $\fq[X]_{\leq k}$ can be interpreted as a $t$-intersecting family in $\hom k$, but not vice versa.
We can thus carry characterisation results in $\hom k$ over to $\fq[X]_{\leq k}$.

The crucial difference between the incidence structures is that for any $k' < k$, $(\fq^2, \fq[X]_{\leq k},\mi)$ has $(\fq^2, \fq[X]_{\leq k'},\mi)$ as a substructure.
This no longer holds for $\hom k$, since altering $k$ means that we alter $f(\infty)$.
We can no longer apply induction.

\bigskip

Not much is known in the homogeneous setting.
It is easy to extend \Cref{Res:Weak EKR} to this setting if $t < k$.
However, if $t = k$, we find a quadratic upper bound on $k$-intersecting families in $\hom k$, while we would expect a linear upper bound.

\begin{prop}
 \label{Prop:t int}
 Suppose that $t \leq k < q$, and $k \geq 2$.  Let $\mf$ be a $t$-intersecting family in $\hom k$. \begin{enumerate}  \item If $t < k$, then $|\mf| \leq q^{k+1-t}$.  \item If $t = k$, then $|\mf| < \frac{q^2-1}k + 1$. \end{enumerate}
\end{prop}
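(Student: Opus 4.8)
The plan is to recast the whole statement in terms of the linear code attached to $\hom k$. Fix for each point $P\in\pg(1,q)$ a representative $v_P\in\fq^2$ and a linear form $\ell_P\in\hom 1$ vanishing at $v_P$. Evaluating $F\in\hom k$ at the $v_P$ gives a word of $\fq^{q+1}$ whose image is the $[q+1,k+1,q+1-k]$ MDS code $C$ of the normal rational curve; a nonzero codeword has the minimum weight $q+1-k$ exactly when the corresponding polynomial is $c\prod_{P\in B}\ell_P$ for some $c\in\fq\setminus\set 0$ and some $k$-subset $B\subseteq\pg(1,q)$, and then $B$ is its projective zero set. Since a nonzero element of $\hom k$ has at most $k$ roots, two distinct $F,G\in\hom k$ are $k$-intersecting precisely when $F-G$ is a minimum-weight codeword of $C$. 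For part~(1) I would rerun the proof of \Cref{Res:Weak EKR}: that argument uses only that the polynomials have degree at most $k$ and that the evaluation set has more than $k$ elements, so it applies verbatim to the evaluation set $\pg(1,q)$, and the bound $q^{k+1-t}$ does not see the size of the evaluation set; the hypothesis $t<k$ is essential, since at $t=k$ the leading coefficient becomes one of the coordinates and one only gets the quadratic bound of part~(2). So I concentrate on part~(2).

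We may assume $|\mf|\geq 2$, as $\tfrac{q^2-1}{k}+1\geq q+2>1$ handles $|\mf|\leq 1$. Fix $F_0\in\mf$ and replace $\mf$ by $\mf-F_0$; this preserves $k$-intersection and puts $0\in\mf$. Then every $F\in\mf\setminus\set 0$, being the difference of the distinct elements $F$ and $0$ of a $k$-intersecting family, is a minimum-weight codeword, say $F=c_F\prod_{P\in B_F}\ell_P$ with $|B_F|=k$. For $P\in B_F$ write $F=\ell_P\cdot F_{(P)}$ with $F_{(P)}\in\hom{k-1}$ and set $\mu_F(P):=F_{(P)}(v_P)$, which lies in $\fq\setminus\set 0$ because $P$ is a \emph{simple} root of $F$. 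Put $\Phi(F):=\sett{(P,\mu_F(P))}{P\in B_F}$, a $k$-element subset of $\pg(1,q)\times(\fq\setminus\set 0)$. The crucial step is that these sets are pairwise disjoint: if $(P,\mu)\in\Phi(F)\cap\Phi(G)$ with $F\neq G$, then $F(v_P)=G(v_P)=0$, so $P$ is a root of $F-G$; moreover $(F-G)/\ell_P=F_{(P)}-G_{(P)}$ vanishes at $v_P$ because $\mu_F(P)=\mu_G(P)$, whence $\ell_P^2\mid F-G$ and $P$ is a root of $F-G$ of multiplicity at least $2$ — impossible, since the minimum-weight codeword $F-G$ has $k$ distinct (simple) roots. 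Counting then gives $k(|\mf|-1)=\sum_{F\in\mf\setminus\set 0}|\Phi(F)|\leq(q+1)(q-1)=q^2-1$, hence $|\mf|\leq\tfrac{q^2-1}{k}+1$.

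To upgrade this to a strict inequality, note it is automatic when $k\nmid q^2-1$, so suppose $k(|\mf|-1)=q^2-1$; then $k(|\mf|-1)\equiv -1\pmod q$, and since $2\leq k<q$ gives $k\not\equiv 1\pmod q$, we get $q\nmid|\mf|$. On the other hand the bound $k(|\mf|-1)\leq q^2-1$ was derived for an arbitrary base point, so under our assumption it is an equality for \emph{every} $F_0\in\mf$, i.e.\ the sets $\Phi(F)$, $F\in\mf\setminus\set{F_0}$ (formed via $\mf-F_0$), partition $\pg(1,q)\times(\fq\setminus\set 0)$. Restricting this partition to $\set P\times(\fq\setminus\set 0)$ shows that, for each point $P$, exactly $q-1$ elements $F\in\mf\setminus\set{F_0}$ satisfy $F(v_P)=F_0(v_P)$; hence every nonempty fibre of $F\mapsto F(v_P)$ on $\mf$ has size exactly $q$, so $q\mid|\mf|$, a contradiction, and therefore $|\mf|<\tfrac{q^2-1}{k}+1$. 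I expect the main obstacle to be spotting the right gadget — the local coefficient $\mu_F(P)$ — and checking that the $\Phi(F)$ are disjoint; after that the counting bound is immediate and the strictness is squeezed out by the divisibility remark (or, for the finitely many $(q,k)$ with $k\mid q^2-1$ in a given range, by excluding the rigid equality configuration directly). A secondary point is to verify that $\mu_F(P)$ is well defined — it depends only on the fixed choices of $v_P$ and $\ell_P$ — and that translating by $F_0$ leaves the $k$-intersecting property intact.
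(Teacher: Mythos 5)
Your part (2) is correct and, at bottom, follows the same route as the paper: both arguments rest on the per-point bound that, for a fixed $F_0\in\mf$ and a fixed point of $\pg(1,q)$, at most $q$ members of $\mf$ agree with $F_0$ there, followed by the count $k(|\mf|-1)\leq(q+1)(q-1)$ and a divisibility contradiction in the equality case. The paper gets the per-point bound by noting that $\sett{g\in\mf}{g(\infty)=f(\infty)}$ becomes, after dehomogenising, a $(k-1)$-intersecting family in $\fq[X]_{\leq k-1}$ and quoting \Cref{Res:Weak EKR}; you prove it directly with the gadget $\mu_F(P)=F_{(P)}(v_P)$, observing that two translates sharing both the root $P$ and the value $\mu$ would have a difference divisible by $\ell_P^2$, contradicting that this difference splits into $k$ distinct linear factors. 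That is a valid, self-contained substitute, and your strictness analysis ($q\mid|\mf|$ from the forced partition for every base point, against $q\nmid|\mf|$ because $k\not\equiv 1\pmod q$) is the same arithmetic as in the paper.

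Part (1), however, has a genuine gap. You assert that the proof of \Cref{Res:Weak EKR} ``applies verbatim'' to the evaluation set $\pg(1,q)$, but the standard argument behind that lemma --- partition $\fq[X]_{\leq k}$ according to the coefficients of $X^t,\dots,X^k$, so that two polynomials in the same class differ by a polynomial of degree $<t$ and hence cannot be $t$-intersecting --- breaks in the homogeneous setting: a difference of the form $\sum_{i<t}c_iX^iY^{k-i}=Y^{k-t+1}h(X,Y)$ vanishes at $(1:0)$ as well as at up to $t-1$ affine points, so two members of the same class can perfectly well be $t$-intersecting (for $k=3$, $t=2$, the difference $Y^2(c_1X+c_0Y)$ with $c_1\neq 0$ already has two projective zeros). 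What is needed, and what the paper supplies, is a $t$-dimensional subspace of $\hom k$ all of whose nonzero elements have at most $t-1$ projective zeros: take $\pi\cdot\hom{t-1}$ with $\pi$ irreducible of degree $k+1-t\geq 2$ (this is exactly where $t<k$ enters), which corresponds to a $[q+1,t]_q$ MDS subcode of $ERS(q,k)$, and then a $t$-intersecting family meets each of its $q^{k+1-t}$ cosets at most once (\Cref{Prop:Weak EKR:t}). Your parenthetical remark about the leading coefficient gestures at the difficulty but does not provide this construction, so part (1) as written is not proved.
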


We note that \Cref{Prop:t int} (2) extends \cite[Proposition 4.6.2]{thesis}, which in turn uses an argument by Blokhuis and Bruen \cite{Blokhuis:Bruen}.
Moreover, there is a good reason why the case $t=k$ does not give the upper bound $|\mf| \leq q$.
Namely, if $q$ is odd, then there are $2$-intersecting families in $\hom 2$ of size $\frac{3q-1}2$ \cite[Proposition 4.6.1]{thesis}.
This construction is based on \cite{Blokhuis:Bruen}.

\bigskip

Let us now focus on the case $t=1$.
A similar stability result to \Cref{Res:Uni Pol} (\ref{Res:Uni Pol:2}) holds.

\begin{res}[{\cite[Corollary 6.7]{A3-2022-Stability}}]
 \label{Res:Hom Pol}
 If $\mf$ is an intersecting family in $\hom 2$ with $|\mf| \geq \frac 1 {\sqrt 2} q^2 + 2 \sqrt 2 q + 8$, then either $\mf$ is contained in a star, or $q$ is even, and $\mf$ is contained in an intersecting family of the form $\sett{a X^2 + b XY + c Y^2}{a,c \in \FF_q}$ for some fixed $b \in \FF_q$.
\end{res}

In this paper, we extend the result to polynomials of higher degree.
We will prove the following theorem, which is the main theorem of this paper.

\begin{thm}
 \label{Thm:Main}
 For every integer $k \geq 3$, there exists a constant $\sigma_k$,  such that for all prime powers $q$, every intersecting family $\mf \subseteq \hom k$ with $|\mf| \geq \frac1{\sqrt 2} q^k + \sigma_k q^{k-1}$ is contained in a star.
\end{thm}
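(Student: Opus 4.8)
The plan is to transfer the problem into the language of linear codes and finite geometry set up above, and then combine a spectral Hoffman-type bound with the incidence geometry of normal rational curves and a combinatorial endgame.

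\textbf{Translation.} Evaluating $F\in\hom k$ at fixed representatives of the $q+1$ points of $\pg(1,q)$ (using \Cref{Df:infty} at infinity) is a linear isomorphism of $\hom k$ onto a nondegenerate $[q+1,k+1]$ MDS code $C$ over $\fq$ whose projective system $\ms\subseteq\pg(k,q)$ is a normal rational curve with $q+1$ points. Two polynomials are intersecting precisely when their difference is not a codeword of weight $q+1$, so $\mf$ is intersecting iff it is a coclique of the Cayley graph $\Gamma=\Cay(C,S)$, where $S$ is the set of weight-$(q+1)$ codewords. Up to scalars, $S$ is the cone over the set $\me$ of hyperplanes of $\pg(k,q)$ disjoint from $\ms$, and a star $S_{p,z}$ is exactly a coset $\{x:\langle x,p\rangle=z\}$ of a coordinate subcode, i.e.\ an affine hyperplane on a point $p\in\ms$. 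So the claim becomes: every coclique of $\Gamma$ of size at least $\frac1{\sqrt2}q^k+\sigma_kq^{k-1}$ lies in such an affine hyperplane.

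\textbf{Spectrum and geometry.} As a Cayley graph on $C\cong\fq^{k+1}$, the eigenvalues of $\Gamma$ are the character sums $\lambda_v=\sum_{s\in S}\psi(\langle v,s\rangle)$ with $\psi$ a fixed nontrivial additive character; evaluating this gives $\lambda_v=q\,a_v-|\me|$ for $v\ne\zero$, where $a_v$ is the number of members of $\me$ through $\langle v\rangle$, while $\lambda_\zero=(q-1)|\me|$ is the valency. The points of $\ms$ are exactly the points lying on no member of $\me$, hence give $a_v=0$ and the least eigenvalue $-|\me|$, whose eigenspace is spanned (up to scalars) by the characters indexed by $\ms$; the Hoffman bound then reproduces the value $q^k$. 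Stability needs the second smallest eigenvalue bounded away from $-|\me|$, i.e.\ a uniform estimate $a_v\ge|\me|/q-O(q^{k-2})$ over all points off $\ms$: the hyperplanes missing $\ms$ must be ``well spread'' over the complement of the curve. Establishing this, by controlling $a_v$ through the position of $\langle v\rangle$ relative to the secant, tangent and osculating spaces of $\ms$, uniformly in $q$ and $k$, is the technical core.

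\textbf{Stability, concentration, cleanup.} Feeding the eigenvalue gap into a weighted Hoffman-type inequality shows that, for a coclique $\mf$, the component of $\one_\mf$ orthogonal to $\one$ and to the least-eigenvalue eigenspace has squared $\ell^2$-norm at most $\tfrac{1-|\mf|/q^k}{1-\beta}\,|\mf|$, where $-\beta|\me|$ is the second smallest eigenvalue. When $|\mf|\ge\frac1{\sqrt2}q^k+\sigma_kq^{k-1}$ this deficit is small enough to push through the remaining steps: $\one_\mf$ is forced close, in $\ell^2$, to $\one$ plus a combination of characters indexed by points of $\ms$, so $\mf$ is close to a union of partial stars on points of $\ms$. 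A convexity/Cauchy--Schwarz argument then balances two opposing constraints — the spectral bound forbids $\mf$ from being too evenly spread over many stars, while the intersecting property, combined with the elementary fact that a polynomial outside a star $S_{p,z}$ is disjoint from at least $\tfrac13q^k-O(q^{k-1})$ members of that star, forbids $\mf$ from being large while spread over few stars — and the crossover occurs at $\frac1{\sqrt2}q^k$, forcing essentially all of $\mf$ into a single star $S_{p,z}$. Finally, writing $\mf=(\mf\cap S_{p,z})\sqcup\mb$ with $|\mb|$ small, each $g\in\mb$ must agree off $p$ with every member of $\mf\cap S_{p,z}$; counting these agreements against $|S_{p,z}\cap S_{p',w}|=q^{k-1}$ (for $p'\ne p$) and using $|\mf|>\frac1{\sqrt2}q^k+\sigma_kq^{k-1}$ together with \Cref{Res:Weak EKR} forces $\mb=\varnothing$. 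Taking $\sigma_k$ large enough to absorb all error terms completes the proof.

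\textbf{Main obstacle.} The decisive difficulty is the finite-geometric estimate of the second step: a clean lower bound, valid for every $k\ge3$ and every prime power $q$, on the number of hyperplanes through an arbitrary point off the normal rational curve that miss the curve entirely. The incidence pattern of a normal rational curve with subspaces of $\pg(k,q)$ — osculating flats, higher secant varieties, small-characteristic anomalies — is intricate, and the bound must be sharp enough that the induced eigenvalue gap delivers precisely the constant $\tfrac1{\sqrt2}$ rather than something weaker. A secondary issue is making the concentration and cleanup quantitatively tight, which is exactly what the slack $\sigma_kq^{k-1}$ is for.
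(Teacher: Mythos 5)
Your translation to the extended Reed--Solomon code, the identification of intersecting families with cocliques of the Cayley graph on weight-$(q+1)$ codewords, and the formula $\lambda_v = q\,a_v - |\mm|$ for its eigenvalues all match the paper's setup, but the two load-bearing steps of your outline are not actually supplied. First, the geometric estimate you defer as the ``technical core'' --- that every point off the normal rational curve lies on $|\mm|/q + O(q^{k-2})$ of the hyperplanes missing the curve --- cannot be left as an acknowledged obstacle: it is precisely \Cref{Prop:Bounded deviation}, which the paper proves by induction on $k$, projecting the curve from one of its own points so that hyperplane counts through $P$ and a curve point $Q$ become hyperplane counts for a punctured normal rational curve one dimension down (with \Cref{Lm:One special line} controlling the at most one bad line through $P$), the base case $k=3$ being the known plane counts for the twisted cubic. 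Without some such argument your proposal proves nothing for general $k$.

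Second, and more seriously, the concentration step as you describe it cannot produce the constant $\frac1{\sqrt2}$, and you misattribute where it comes from. The spectral/Hoffman-type projection argument (the paper's \Cref{Thm:more than few}) only yields a star $\mf_{i,\alpha}$ with $|\mf\cap\mf_{i,\alpha}|\gtrsim 2|\mf|/q$; the factor $2=1+a^{-1}$ comes from the code having only $n\approx q$ coordinates, not from the size of the eigenvalue gap, which is needed only to the right order of magnitude. Your second constraint --- a codeword outside a star misses at least $\mu_{k,0}q^k\geq\frac13 q^k$ of its members --- only says that a star not containing all of $\mf$ meets it in at most $(1-\mu_{k,0})q^k\approx\frac23 q^k$ codewords. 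These two facts are perfectly compatible (since $\sqrt2\,q^{k-1}\ll\frac23 q^k$), so no ``convexity/Cauchy--Schwarz balancing'' of them alone forces $\mf$ into a star at the threshold $\frac1{\sqrt2}q^k$ or any other threshold below $\frac23 q^k$. The missing ingredient is the exclusion of the middle range: the paper's \Cref{Thm:Few or many}, obtained from the bipartite expander mixing lemma applied to the bipartite graph between a star and its complement, gives $x(|\mf|-x)\lesssim q^{2k-1}$ (in your normalisation), where the second eigenvalue of that bipartite graph (\Cref{Prop:Eigval B}) again requires the well-spreadness estimate together with the arc property of the curve. Combining this with $x\gtrsim 2|\mf|/q$ gives $2|\mf|^2/q\lesssim q^{2k-1}$, which is exactly where $\frac1{\sqrt2}$ appears, and it jumps $x$ above $(1-\mu_{k,0})q^k$, after which \Cref{Lm:Many is all} finishes; note that your cleanup count via $|S_{p,z}\cap S_{p',w}|=q^{k-1}$ summed over the $q$ other points only bounds the number of members of the star meeting $g$ by $q^k$, which is vacuous, so it too must be replaced by the $(1-\mu_{k,0})q^k$ bound and hence also depends on the missing dichotomy.
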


Note that since an intersecting family in $\fq[X]_{\leq k}$ represents an intersecting family in $\hom k$, \Cref{Thm:Main} also holds for $\fq[X]_{\leq k}$.
We remark that we do not believe the constant $\frac 1 {\sqrt 2}$ to be optimal, but it is the optimum of the current proof techniques.

\subsection{Intersection problems for linear codes}

\Cref{Thm:Main} can be interpreted to be dealing with the intersection problem for \emph{extended Reed-Solomon codes}.
More generally, we can discuss intersection problems for linear codes.
Given a finite field $\fq$, a $k$-dimensional subspace $C$ of $\fq^n$ is called a \emph{linear code} with parameters $[n,k]_q$, or more briefly a linear $[n,k]_q$ code.
Throughout the paper, we always assume that the linear codes under consideration are \emph{projective}, which means that there do \underline{not} exist coordinate positions $i$ and $j$, and a scalar $\alpha \in \fq$, such that $c(i) = \alpha c(j)$ for all $c \in C$.

\begin{df}
 \label{Df:Code int}
 Given a linear code $C$, we can define the incidence structure $(\set{1, \dots, n} \times \fq, C, \mi)$, where $(i,\alpha)$ is incident with $c \in C$ if and only if $c(i) = \alpha$.
 Then $\mf \subseteq C$ is a \emph{$t$-intersecting family} if and only if for any two codewords $c, c' \in C$, there exist at least $t$ coordinate positions $i$ where $c(i) = c'(i)$.
 Equivalently, no two elements of $\mf$ are at Hamming distance more than $n-t$.
\end{df}

Let us introduce some general terminology, but before doing so introduce a useful convention.
Throughout the paper, we will work with real vector spaces whose coordinate positions are labelled by a finite set $X$.
It is then easier to interpret this vector space as the set $\RR^X$ of functions $X \to \RR$.
These functions will still be referred to as vectors.
Similarly, if $M$ is a matrix whose rows and columns are labelled by finite sets $X$ and $Y$ respectively, we will represent it as a function $M: X \times Y \to \RR$.
The same convention holds for complex vector spaces.

\begin{df}
 Given a set $S$, define its \emph{characteristic vector} $\chi_S$ to be the function that takes the value 1 on the elements of $S$ and the value 0 anywhere else.
\end{df}
Typically when we discuss the characteristic vector of $S$, we restrict its domain to a natural superset of $S$.
In particular, for a family $\mf \subseteq \mb$ of blocks, we let the domain of $\chi_\mf$ be $\mb$, so that $\chi_\mf \in \RR^\mb$.

Based on the terminology from \cite{Maegher}, we say that $(\mp,\mb,\mi)$ has the
\begin{itemize}
 \item \emph{weak EKR property} if the size of the largest star is the size of the largest intersecting family (typically we study settings where all stars have the same size),
 \item \emph{EKR module property} if the characteristic vector of any intersecting family of maximum size is a linear combination of characteristic vectors of stars,
 \item \emph{strict EKR property} if the only intersecting families of maximum size are stars,
 \item \emph{HM property} (named after Hilton and Milner \cite{Hilton:Milner}) if the strict EKR property holds, and the largest intersecting families that are not contained in a star are of the following form for some point $P \in \mp$, and some block $B \in \mb$ that is not incident with $P$:
 \[
  \mf = \sett{B' \in \mb}{P \mi B', \, (\exists Q \mi B')(Q \mi B)} \cup \set B.
 \]
 In other words, $\mf$ consists of the block $B$, and all blocks through $P$ that intersect $B$.
 This family is easily checked to be intersecting, and any intersecting family of this form is said to be of \emph{Hilton-Milner type}.
\end{itemize}

We will investigate for which linear codes $C$ the above properties hold.
This is intimately tied to the geometry of the \emph{projective system} of $C$ (see \Cref{Sec:Codes} for the definition).
We will prove the following theorem.

\begin{thm}
 \label{Thm:Codes}
Let $C$ be a linear $[n,k]_q$ code with projective system $\ms$ in $\pg(k-1,q)$.
Let $\mm$ denote the set of hyperplanes that don't intersect $\ms$.
\begin{enumerate}
 \item \label{Thm:Codes:1} $C$ has the weak EKR property if and only if $\mm$ is not empty.
 \item \label{Thm:Codes:2} $C$ has the EKR module property if and only if every point $P \notin \ms$ is incident with at least one hyperplane of $\mm$.
 \item \label{Thm:Codes:3} If no 3 points of $\ms$ are collinear, and for every point $P \notin \ms$,
  \[
   \left| |\sett{\Pi \in \mm}{P \in \Pi}| - \frac{|\mm|}q  \right| < \frac{|\mm|}{q \min \{ \sqrt n, q-1 \}},
  \]
  then $C$ has the strict EKR property.
\end{enumerate}
\end{thm}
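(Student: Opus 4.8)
\emph{Proof plan.} The first move is to pass to a Cayley graph. Fix a generator matrix $G$ of $C$ with columns $g_1,\dots,g_n\in\FF_q^k$, which represent the points of $\ms$; identify the codeword attached to $x\in\FF_q^k$ with $x$ itself, so that its $i$-th entry is $x\cdot g_i$. Two codewords $x,x'$ fail to intersect exactly when $(x-x')\cdot g_i\neq0$ for every $i$, i.e.\ when the hyperplane $(x-x')^{\perp}$ lies in $\mm$. Thus the ``non-intersection'' graph is $\Gamma=\Cay(\FF_q^k,T)$ with $T=\set{v\neq\zero:v^{\perp}\in\mm}$; intersecting families are independent sets of $\Gamma$, and the star $\{x:x\cdot g_i=\alpha\}$ is a coset of $g_i^{\perp}$. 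The additive characters $\psi_a(x)=\omega^{\Tr(a\cdot x)}$ diagonalise $\Gamma$, and splitting $T$ into the $q-1$ punctured lines dual to the members of $\mm$ gives $\lambda_a=q\,m_a-|\mm|$, where $m_a$ is the number of hyperplanes of $\mm$ through $\vspan a$ (so $\lambda_{\zero}=(q-1)|\mm|$ is the valency). Every point of $\ms$ lies on no hyperplane of $\mm$, so whenever $\mm\neq\emptyset$ the least eigenvalue is $\tau=-|\mm|$, the Hoffman ratio bound yields $\alpha(\Gamma)\le q^k\cdot\frac{-\tau}{\lambda_{\zero}-\tau}=q^{k-1}$ (the size of a star), and when $\mm=\emptyset$ the graph is edgeless; this is (1).

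For (2), let $\mn$ be the set of points lying on no hyperplane of $\mm$; then $\mn\supseteq\ms$ and the $\tau$-eigenspace is $U=\bigoplus_{\vspan a\in\mn}\vspan{\psi_{\mu a}:\mu\in\FF_q^{*}}$. Since $\chi_{\{x:x\cdot g_i=\alpha\}}=\tfrac1q\one+\tfrac1q\sum_{\mu\neq0}\omega^{-\Tr(\mu\alpha)}\psi_{\mu g_i}$, the span of the star characteristic vectors is $\RR\one$ plus the ``$\ms$-part'' of $U$. If $\mn=\ms$ this is all of $\RR\one\oplus U$, and by the equality case of the Hoffman bound any maximum intersecting family $\mf$ has $\chi_{\mf}-\tfrac1q\one\in U$, hence $\chi_{\mf}$ is a combination of star vectors. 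Conversely, if $\vspan a\in\mn\setminus\ms$, then $\{x:x\cdot a=0\}$ is intersecting (precisely because $a$ is on no member of $\mm$) and of maximum size, but its Fourier support meets $\vspan a$, so it is not a combination of star vectors; this gives (2).

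For (3), note first that the hypothesis with $m_P=0$ would give $|\mm|/q<|\mm|/(q\min\{\sqrt n,q-1\})$, which is impossible, so $\mn=\ms$ and we are in the case of (2). Let $\mf$ be a maximum intersecting family, identified with $A\subseteq\FF_q^k$ of size $q^{k-1}$. By the equality case of the Hoffman bound $\chi_A-\tfrac1q\one\in U=\bigoplus_{l=1}^{n}U_l$ with $U_l=\vspan{\psi_{\mu g_l}:\mu\neq0}$; writing $\phi_l(\beta):=q^{-(k-1)}|A\cap\{x:x\cdot g_l=\beta\}|$ for the fibre averages, the $U_l$-component of $\chi_A$ is $w_l(x)=\phi_l(x\cdot g_l)-\tfrac1q$. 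Now turn the pointwise identity $\chi_A^2=\chi_A$ into the convolution identity $\widehat{\chi_A}=\widehat{\chi_A}*\widehat{\chi_A}$ on Fourier coefficients. The key geometric input is that, since no three points of $\ms$ are collinear, a sum $\mu g_i+\nu g_j$ with $\mu,\nu\neq0$ can lie on $\vspan{g_l}$ only if $i=j=l$; hence the convolution does \emph{not} mix distinct lines $\vspan{g_l}$, and restricting it to $\vspan{g_l}$ produces a one-variable identity which, after an $\FF_q$-Fourier transform and the observation $\widehat{w_l}(\beta)=\phi_l(\beta)-\tfrac1q$ (up to $\beta\mapsto-\beta$), forces $\phi_l$ to take at most two values $u_l>v_l$, and — crucially — forces $u_l+v_l=1$.

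I would then close the argument as follows. Fix $l$ with $\phi_l$ non-constant, taking the value $u_l$ on $A_l$ points and $v_l=1-u_l$ on the remaining $q-A_l$ ($1\le A_l\le q-1$). From $\sum_\beta\phi_l(\beta)=q^{-(k-1)}|A|=1$ we get $A_lu_l+(q-A_l)(1-u_l)=1$, i.e.\ $u_l=\frac{A_l-q+1}{2A_l-q}$; combined with $\tfrac12<u_l\le1$ (because $0\le\phi_l\le1$ and $u_l>v_l$) this forces $A_l=1$, $u_l=1$, $v_l=0$ — this is exactly where $q\ge3$ is used. So $\phi_l$ is $\{0,1\}$-valued with a single $1$, whence $A$ is contained in, and by cardinality equal to, a single star $\{x:x\cdot g_l=\beta_l\}$. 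Since cosets of distinct hyperplanes cannot coincide, at most one $l$ can have $\phi_l$ non-constant, and not all $\phi_l$ are constant (else $\chi_A=\tfrac1q\one$); hence $\mf$ is a star, proving the strict EKR property. I expect the main obstacle to be the case $q=2$: there $\sum_\beta\phi_l(\beta)=1$ is automatic and carries no information, so excluding that the Fourier support of $\chi_A$ spreads over several lines $\vspan{g_l}$ (which can happen geometrically only through a ``quadrangle'' dependence among points of $\ms$) requires a genuinely global argument. This is precisely the place where the full quantitative hypothesis $\bigl||\{\Pi\in\mm:P\in\Pi\}|-|\mm|/q\bigr|<|\mm|/(q\min\{\sqrt n,q-1\})$ must be invoked: it forces every non-trivial, non-$\tau$ eigenvalue of $\Gamma$ to be small, so that $\Gamma$ is an expander off the $\tau$-eigenspace, and one finishes by an expander-mixing/interlacing estimate applied to the partition induced by $A$ together with the transversality property $A\cap(A+v)=\emptyset$ for all $v\in T$ (valid because a maximum independent set meeting the Hoffman bound is a perfect colouring class).
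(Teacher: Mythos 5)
Parts (1) and (2) of your proposal are correct and follow the paper's own route: the same Cayley graph $\Cay(\FF_q^k,\sett{v}{v^\perp\in\mm})$, the same character computation $\lambda_a=qm_a-|\mm|$, the Hoffman bound for (1) (the paper also gives a one-line coset argument), and for (2) the identification of the $(-|\mm|)$-eigenspace with the span of the star vectors. Your explicit witness $\set{x:x\cdot a=0}$ for the converse of (2) is a nice concrete version of what the paper leaves implicit.

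Part (3) is where you genuinely diverge. The paper proves (3) by a two-sided argument: it builds the bipartite graph $B(C,i,\alpha)$ between a star and its complement, computes its spectrum in terms of the quantities $\sum_{P\in\ell\setminus\{Q\}}\bigl(|\sett{\Pi\in\mm}{P\in\Pi}|-|\mm|/q\bigr)^2$, applies the bipartite expander mixing lemma to show $|\mf\cap\mf_{i,\alpha}|\cdot|\mf\setminus\mf_{i,\alpha}|$ is small, and separately projects $\chi_\mf$ onto the star space to find a star meeting $\mf$ in at least $q^{k-2}(1+\frac{q-1}{n})$ codewords; the quantitative hypothesis is what makes these two bounds incompatible unless $\mf$ is a star. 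Your route instead exploits the Hoffman equality case directly: $\widehat{\chi_A}$ is supported on $\{0\}\cup\bigcup_l\vspan{g_l}$, the no-three-collinear condition prevents the convolution identity $\widehat{\chi_A}=\widehat{\chi_A}*\widehat{\chi_A}$ from mixing distinct lines at frequencies on $\vspan{g_l}$, and the resulting one-variable identity forces each fibre density $\phi_l$ to satisfy $\phi_l(1-\phi_l)=\mathrm{const}$, hence to take two values summing to $1$. I checked the no-mixing claim and the quadratic argument; they are sound, and for $q\geq 3$ your counting step correctly forces $A_l=1$, $u_l=1$. Notably, your argument uses the quantitative hypothesis only to deduce that every $P\notin\ms$ lies on some hyperplane of $\mm$ (the module property), so for $q\geq3$ it would actually prove a \emph{stronger} statement than the paper's (3).

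The genuine gap is $q=2$, which the theorem as stated includes and which you acknowledge but do not close. For $q=2$ the per-line identity is literally vacuous ($F_l(1)=2F_l(0)F_l(1)=F_l(1)$), and $\sum_\beta\phi_l(\beta)=1$ carries no information, so nothing constrains the $\phi_l$ individually; one must use the cross-line interactions in $\chi_A^2=\chi_A$, which is exactly what your no-mixing reduction throws away. Your proposed repair does not work: at $q=2$ we have $\min\{\sqrt n,q-1\}=1$, so the hypothesis only asserts $|2m_P-|\mm||<|\mm|$, i.e.\ $0<m_P<|\mm|$ --- this re-derives the module property but gives \emph{no} spectral gap between the middle eigenvalues and $\pm|\mm|$, so there is no expander mixing estimate to invoke beyond the Hoffman bound you have already used. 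By contrast, the paper's proof of (3) handles $q=2$ uniformly (its final contradiction $n^2+(q-2)n<q-1$ fails already for $n\geq 2$, $q=2$). So your argument, as written, proves (3) for $q\geq 3$ (indeed under a weaker hypothesis) but leaves the binary case open.
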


In addition, in order to prove \Cref{Thm:Main}, we provide a set of sufficient conditions for a family of linear codes to ensure a stability result, putting us one step closer towards establishing the HM property.

\begin{thm}
 \label{Thm:Stability}
 Let $\mq$ be an infinite set of prime powers, and suppose that for each $q \in \mq$, there exists a linear code $C_q$ with parameters $[n_q,k]_q$ (where $n_q$ can vary, but $k$ is fixed).  Denote by $\ms_q$ the projective system associated with $C_q$, and by $\mm_q$ the set of hyperplanes that don't intersect $\ms_q$.  Suppose that there exist constants $a,b,\delta,\mu,\tau$ with $0 < \mu < 1$ such that the following properties hold for all of $q \in \mq$:  \begin{enumerate}   \item no 3 points of $\ms_q$ are collinear,   \item $n_q \leq aq+b$,   \item $\big| |\mm_q| - \mu q^{k-1}\big| \leq \tau q^{k-2}$,   \item for each point $P$ of $\pg(k-1,q)$ outside of $\ms_q$, $\left| \sett{\Pi \in \mm_q}{P \in \Pi} - \frac{|\mm_q|}q \right| \leq \delta q^{k-3}$.  \end{enumerate}  Then there exists a constant $\sigma$ such that in each of the codes $C_q$, every intersecting family $\mf$ with \[  |\mf| \geq \max\left\{ \frac1{\sqrt{1+a^{-1}}}, \, 1-\mu \right\} q^{k-1} + \sigma q^{k-2}  \] is contained in a star.
\end{thm}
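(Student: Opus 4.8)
The plan is to run the argument through spectral graph theory, in the same spirit as \Cref{Thm:Codes}. First I would fix $q \in \mq$, choose a generator matrix $M$ for $C_q$ (so $\ms_q$ is the set of columns of $M$ as points of $\pg(k-1,q)$, and every codeword is $xM$ for a unique $x \in \fq^k$), and form the Cayley graph $G_q = \Cay(C_q, D_q)$ on the additive group of $C_q$ with connection set $D_q = \sett{c \in C_q}{\wt(c) = n_q}$ the full-weight codewords; the intersecting families are then exactly the independent sets of $G_q$. Since $xM$ vanishes in position $i$ iff the point $\langle M_i\rangle$ lies on the hyperplane $x^\perp$, we have $D_q = \sett{xM}{x^\perp \in \mm_q}$, so $G_q$ is $(q-1)|\mm_q|$-regular. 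Diagonalising by additive characters, the eigenvalue attached to a point $P$ of $\pg(k-1,q)$ is $q\cdot|\sett{\Pi\in\mm_q}{P\in\Pi}| - |\mm_q|$: it equals $-|\mm_q|$ for the $n_q(q-1)$ characters coming from points of $\ms_q$ (a hyperplane in $\mm_q$ avoids all of $\ms_q$), equals $(q-1)|\mm_q|$ on $\one$, and by hypothesis (4) has absolute value at most $\delta q^{k-2}$ for points off $\ms_q$. By (3), $|\mm_q| \geq (\mu - \tau q^{-1})q^{k-1}$ dwarfs $\delta q^{k-2}$, so $\lambda_{\min}(G_q) = -|\mm_q|$, and the Hoffman ratio bound gives $\alpha(G_q) \leq q^{k-1}$, the value $q^{k-1}$ being attained by stars (cf.\ \Cref{Thm:Codes}, whose proof sets up exactly this picture).

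Next I would record two identities. Let $\mf$ be an intersecting family and $N := |\mf|$. Translating $\mf$ inside $C_q$ (which affects neither intersectingness nor containment in a star), I may assume $\zero \in \mf$ and that the largest slice $x_{i,\alpha} := |\mf \cap \mg_{i,\alpha}|$, where $\mg_{i,\alpha} := \sett{c \in C_q}{c(i) = \alpha}$ is a star of size $q^{k-1}$, is attained at $(i^*,0)$. Writing $\chi_\mf = f_0 + f_W + f_U$ for the orthogonal decomposition along $\langle\one\rangle$, the $(-|\mm_q|)$-eigenspace $W$, and the span $U$ of the remaining eigenspaces, the relation $\chi_\mf^T A_q \chi_\mf = 0$ together with the eigenvalue estimates and $\|\chi_\mf\|^2 = N$ yields $\|f_W\|^2 = \frac{q-1}{q^k}N^2 + O(q^{k-2})$ and $\|f_U\|^2 = N - \frac{N^2}{q^{k-1}} + O(q^{k-2})$, with constants depending only on $a,b,\mu,\tau,\delta,k$. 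Since $W$ is the orthogonal sum over $i$ of the spaces $\langle \chi_{\mg_{i,\alpha}} - q^{-1}\one : \alpha \in \fq\rangle$, we get $\|f_W\|^2 = q^{1-k}\sum_{i,\alpha}(x_{i,\alpha} - N/q)^2$, hence $\sum_{i,\alpha}(x_{i,\alpha} - N/q)^2 = \frac{q-1}{q}N^2 + O(q^{2k-3})$: this is the constraint forcing the slice vector $(x_{i,\alpha})$ to be almost flat once $N$ is large.

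Then I would establish the Hilton--Milner bound. The arithmetic fact needed is: for every position $i$ and every $\beta \neq 0$, $|\sett{c \in D_q}{c(i) = \beta}| = |\mm_q|$, since each $\Pi \in \mm_q$ contributes exactly one $x$ with $x^\perp = \Pi$ and $x\cdot M_i = \beta$ (the column $M_i$ is off $\Pi$). From this, if $\mf$ is intersecting but contained in no star, then for each $i$ one may pick $c \in \mf\setminus\mg_{i,0}$ and conclude that every codeword of $\mf\cap\mg_{i,0}$ agrees with $c$ somewhere other than $i$, so $x_{i,0} \leq q^{k-1} - |\mm_q|$; in particular $t := x_{i^*,0} \leq q^{k-1} - |\mm_q| \leq (1-\mu)q^{k-1} + \tau q^{k-2}$. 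Writing $N = t + m'$ with $m' := |\mf\setminus\mg_{i^*,0}| \geq 1$: if $m' \leq \tfrac12\sigma q^{k-2}$ then $N \leq (1-\mu)q^{k-1} + (\tau + \tfrac12\sigma)q^{k-2} < (1-\mu)q^{k-1} + \sigma q^{k-2}$ once $\sigma > 2\tau$, so $\mf$ must lie in a star in this regime.

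The hard part is the complementary regime $m' \gtrsim q^{k-2}$, where I expect the bulk of the work to lie. Here the $m'$ codewords of $\mf' := \mf\setminus\mg_{i^*,0}$ each lie in $\bigcup_{i\neq i^*}\mg_{i,0}$ and each must meet every codeword of $\mf$, which simultaneously forces many slices $x_{i,\alpha}$ to be large while the Hilton--Milner bound caps each slice at $q^{k-1} - |\mm_q|$. The task is to feed this tension back into the almost-flatness identity $\sum_{i,\alpha}(x_{i,\alpha} - N/q)^2 = \tfrac{q-1}q N^2 + O(q^{2k-3})$ and extract $N \leq \tfrac{1}{\sqrt{1+a^{-1}}}q^{k-1} + O(q^{k-2})$; hypothesis (1) enters to pin the triple slice-overlaps $|\mg_{i,0}\cap\mg_{j,0}\cap\mg_{l,0}|$ to exactly $q^{k-3}$, making the relevant inclusion--exclusion estimates tight to order $q^{k-2}$, and hypothesis (2) enters as $n_q = (a+O(q^{-1}))q$ in the final optimisation over the ratio $m'/N$. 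Obtaining the constant $\tfrac1{\sqrt{1+a^{-1}}}$ rather than something weaker, and controlling all the $q^{k-2}$ error terms honestly, is the delicate point. Once both regimes are in place, every intersecting $\mf$ not contained in a star satisfies $|\mf| \leq \max\{(1+a^{-1})^{-1/2},\, 1-\mu\}q^{k-1} + Kq^{k-2}$ for some $K = K(a,b,\mu,\tau,\delta,k)$, and taking $\sigma$ larger than both $K$ and $2\tau$ (the statement being vacuous for the finitely many small $q$, if any, with $|C_q| = q^k$ below the claimed threshold) proves the theorem.
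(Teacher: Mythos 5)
Your setup (the Cayley graph on $C_q$ with full-weight connection set, its eigenvalues $q|\sett{\Pi\in\mm_q}{P\in\Pi}|-|\mm_q|$, the Hoffman bound, the identification of the $(-|\mm_q|)$-eigenspace with the span of the star vectors under hypothesis (4), the projection identity $\|f_W\|^2=q^{1-k}\sum_{i,\alpha}(x_{i,\alpha}-N/q)^2$, and the cap $x_{i,\alpha}\leq q^{k-1}-|\mm_q|$ for a star not containing $\mf$) reproduces correctly the ``easy'' ingredients, which in the paper correspond to \Cref{Prop:Eigval Gamma_T}, \Cref{Thm:more than few} and \Cref{Lm:Many is all}. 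But the quantitative heart of the theorem is exactly the regime you defer to at the end, and there your proposal contains no actual argument: you state that one should ``feed the tension back into the almost-flatness identity'' and admit that extracting the constant $\frac1{\sqrt{1+a^{-1}}}$ and controlling the $q^{k-2}$ errors is the delicate, undone part. That step does not follow from the ingredients you have assembled. Indeed, combining your flatness identity with the per-slice cap $x_{i,\alpha}\leq q^{k-1}-|\mm_q|$ in the obvious way ($\sum x_{i,\alpha}^2\leq (q^{k-1}-|\mm_q|)\,n_qN$) only yields $N\lesssim\frac{a(1-\mu)}{a+1}q^{k}$, which is weaker than the trivial Hoffman bound $N\leq q^{k-1}$; so the sketched route needs substantially new input, not just bookkeeping of triple overlaps $|\mg_{i,0}\cap\mg_{j,0}\cap\mg_{l,0}|$.

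What the paper supplies at this point, and what is absent from your proposal, is a \emph{per-star} second-moment estimate: for the star $\mf_{i,\alpha}$ maximising the intersection, \Cref{Thm:Few or many} gives $|\mf\cap\mf_{i,\alpha}|\cdot|\mf\setminus\mf_{i,\alpha}|\leq q^{2k-3}+\mo(q^{2k-4})$. This comes from the bipartite expander mixing lemma applied to the graph $B(C,i,\alpha)$ of \Cref{Df:Bip graph}, whose second eigenvalue is computed in \Cref{Prop:Eigval B} from the distribution of the hyperplanes of $\mm_q$ over the points of lines through the curve point $Q$ corresponding to coordinate $i$; it is there (not in triple star overlaps) that hypothesis (1) is used, since a line through $Q$ carries at most one further point of $\ms_q$, and hypothesis (4) bounds the contribution of the remaining points. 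The paper then combines this ``few or many'' dichotomy with the lower bound on the maximal slice (your Claim-1-type estimate, \Cref{Thm:more than few}) via a quadratic-interval argument: the product bound fails both at $x_1\approx\frac{1+a^{-1}}{q}|\mf|$ and at $x_2\approx(1-\mu)q^{k-1}$, so the maximal slice exceeds $q^{k-1}-|\mm_q|$ and \Cref{Lm:Many is all} concludes. Until you either prove an analogue of \Cref{Prop:Eigval B}/\Cref{Thm:Few or many} or give a genuinely different argument closing your ``hard regime'', the proof is incomplete precisely where the constant $\frac1{\sqrt{1+a^{-1}}}$ has to be produced.
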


\subsection{Overview of the proof strategy and the paper}
 \label{Sec:Overview}

A crucial tool in the proof strategy are certain Cayley graphs, sometimes called \emph{linear graphs}, see e.g.\ \cite[\S 6.4]{Godsil}.
These are Cayley graphs $\Gamma$ over a finite vector space $\fq^k$, where the connection set $S$ is closed with respect to non-zero scalar multiplication.
Associate with $S$ the set $\mm$ of hyperplanes in $\pg(k-1,q)$ which are of the form $v^\perp$ for some $v \in S$.
Then the eigenvalues of $\Gamma$ are determined by the distribution of the hyperplanes of $\mm$ over the points of $\pg(k-1,q)$.
In the context of a linear code $C$, we take $\mm$ to be the set of hyperplanes that don't intersect the projective system associated with $C$.
The corresponding Cayley graph $\Gamma_0(C)$ is then the Cayley graph on $C$ where adjacency is given by not being intersecting.
Note that cocliques in this graph coincide with intersecting families.

Using this geometric framework for studying eigenvalues of linear graphs, we can prove  \Cref{Thm:Codes} (\ref{Thm:Codes:1}, \ref{Thm:Codes:2}).
This will be the main focus of \Cref{Sec:Mod prop}

The proofs of \Cref{Thm:Codes} (\ref{Thm:Codes:3}) and \Cref{Thm:Stability} require a more advanced strategy.
We build on the tools developed by the author in \cite{A3-2022-Stability}, and vastly expand their applicability.
The strategy is as follows.
Consider a linear code $C$, a large intersecting family $\mf$ in $C$, and let $\mf_{i,\alpha}$ denote the star $\sett{c \in C}{c(i) = \alpha}$.
\begin{enumerate}
 \item \label{Step 1} We prove that for any star $\mf_{i,\alpha}$, $|\mf \cap \mf_{i,\alpha}|$ is either small or large, i.e.\ we prove that $|\mf \cap \mf_{i,\alpha}| \notin [s,t]$ for some integers $s < t$.
 In order to prove this, we construct a graph $B(C,i,\alpha)$.
 This graph is bipartite, with one part being the star $\mf_{i, \alpha}$, the other part the remainder of $C$, and adjacency is given by not being intersecting.
 Then $\mf$ is still a coclique in this graph, and the bipartite expander mixing lemma gives a bound on $|\mf \cap \mf_{i,\alpha}| \cdot |\mf \setminus \mf_{i,\alpha}|$.
 This will be done in \Cref{Sec:Few or many}.
 \item \label{Step 2} We then prove that there is a star $\mf_{i,\alpha}$ that contains a sizeable portion of $\mf$.
 To start, we need a good lower bound on the second smallest eigenvalue of $\Gamma_0(C)$.
 We can then use this to prove that the characteristic vector $\chi_\mf$ of $\mf$, when orthogonally projected onto the space spanned by the characteristic vectors of the stars, is large.
 By an averaging argument, it follows that $\mf$ has a large intersection with some star $\mf_{i,\alpha}$.
 This is done in \Cref{Sec:More than few}.
 \item \label{Step 3} To finish the proof, we use Step (\ref{Step 2}) to prove that some star $\mf_{i,\alpha}$ intersects $\mf$ in at least $s$ points (with $s$ as in Step (\ref{Step 1}).
 By Step \ref{Step 1}, this implies that $\mf_{i,\alpha}$ intersects $\mf$ in more than $t$ points.
 If $t$ is large enough, this suffices to conclude that $\mf$ is a subset of $\mf_{i,\alpha}$.
\end{enumerate}
We use this strategy to prove \Cref{Thm:Codes} (\ref{Thm:Codes:3}) in \Cref{Sec:Strict sub} and \Cref{Thm:Stability} in \Cref{Sec:Stab}.

In \Cref{Sec:Stab pol}, we first prove \Cref{Prop:t int}, and then apply \Cref{Thm:Stability} to extended Reed-Solomon codes to prove \Cref{Thm:Main}.

We conclude the paper with some open problems in \Cref{Sec:Conclusion}.

In \Cref{Sec:Assoc Sch}, we delve into the theory of association (and specifically translation) schemes.
This allows us to determine the exact eigenvalues of some graphs defined on homogeneous polynomials over finite fields.
These eigenvalues can be useful for further investigation of intersection problems.

\section{Preliminaries}
 \label{Sec:Prel}

\subsection{Projective geometry}

We start by recalling some basic notions from finite projective geometry.
We refer the reader to \cite{Hirschfeld:79}.
The projective space $\pg(k-1,q)$ is defined to be the collection of all vector subspaces of $\FF_q^k$, equipped with the natural incidence relation, i.e.\ two elements $\pi$ and $\rho$ of $\pg(k-1,q)$ are incident if and only if one is contained in the other.
We will use terminology from projective geometry.
In particular, if $U$ is a subspace of $\FF_q^k$ and its bases have $t$ elements, then we say that it has (projective) dimension $t-1$.
Subspaces of dimension $0$, $1$, and $2$ are called points, lines, and planes respectively.
Subspaces of codimension $1$ are called hyperplanes.
Given a non-zero vector $(x_1, \dots, x_k)^\top$ of $\FF_q^k$, the projective point $\vspan{(x_1, \dots, x_k)^\top}$ is simply denoted as $(x_1 : \ldots : x_k)$.

\begin{df}
 \label{Df:Projection}
 Consider a subspace $\rho$ in $\pg(k-1,q)$ of dimension $r-1$.
 We define the \emph{quotient space} through $\rho$ as the collection of subspaces that contain $\rho$, or equivalently (since $\rho$ is a subspace of $\fq^k$), as the projective space associated to the vector space $\fq^k/\rho$.
 The quotient space is then isomorphic to $\pg(k-r-1,q)$.
 
 We define the \emph{projection} from $\rho$, denoted $\proj_\rho$, in $\pg(k-1,q)$ as the map that sends a subspace $\sigma$ of $\pg(k-1,q)$ to the subspace $\vspan{\rho,\sigma}/\rho$ in the quotient space through $\rho$.
 We can think of this geometrically as follows.
 Take a subspace $\Pi$ of $\pg(k-1,q)$ disjoint to $\rho$ of maximal dimension, i.e.\ $\dim \Pi = k-r-1$.
 Then we can think of $\proj_\rho$ as the map sending $\sigma$ to $\vspan{\sigma,\rho} \cap \Pi$.
\end{df}

\subsection{Eigenvalue bounds}

Eigenvalue bounds from spectral graph theory form a versatile framework to tackle intersection problems.
Suppose that $R \subseteq X \times X$ is a symmetric binary relation on a finite set $X$.
We can interpret $R$ as an undirected graph $\Gamma$ (possibly containing loops) on $X$.
We define the \emph{adjacency matrix} of $R$ (or of $\Gamma$) as
\[
 A_R = A_\Gamma: X \times X \to \RR: (x,y) \mapsto \begin{cases} 1 & \text{if } (x,y) \in R, \\ 0 & \text{otherwise}. \end{cases}
\]
Since $R$ is symmetric, $A_R$ is a symmetric matrix, and thus admits an orthonormal basis of eigenvectors, and has only real eigenvalues.
We let $\lambda_1 \geq \dots \geq \lambda_{|X|}$ denote the eigenvalues of $A_R$, counted by multiplicity.
If we want to emphasize the relation $R$ or the graph $\Gamma$, we might write $\lambda_i(R)$ or $\lambda_i(\Gamma)$ instead of $\lambda_i$.

The \emph{expander mixing lemma} provides a ubiquitous framework for graph eigenvalue bounds.
We state two such bounds, which play an important role in this paper.
First some definitions and conventions:
\begin{itemize}
 \item A \emph{coclique} in a graph is a set of vertices which are pairwise not adjacent.
 A \emph{clique} is a set of vertices which are pairwise adjacent.
 \item If $S$ is a subset of vertices of $\Gamma$, the domain of the characteristic vector $\chi_S$ of $S$ is understood to be the vertex set of $\Gamma$.
 \item We denote the all-one vector (or the constant function taking value 1) by $\one$, and the zero vector by $\zero$.
\end{itemize}

\begin{res}[Hoffman ratio bound {\cite[Theorem 2.4.1]{Godsil:Meagher}}]
 Let $\Gamma$ be a $k$-regular graph on $n$ vertices.
 Suppose that $S$ is a coclique in $\Gamma$.
 Then
 \[
  |S| \leq \frac{n}{\frac{k}{|\lambda_n|}+1}.
 \]
 Equality holds if and only if $\chi_S$ is in the span of $\one$ and the $\lambda_n$-eigenspace of $A_\Gamma$.
\end{res}

The Hoffman ratio bound has proved itself to be a versatile tool to establish the EKR module property in many different settings.
We refer the reader to \cite{Godsil:Meagher} for a detailed account of this approach.

\bigskip

The second eigenvalue bound that we discuss, concerns bipartite graphs.
Recall that a graph $\Gamma$ is called \emph{bipartite} if its vertices can be partitioned into two cocliques $L$ and $R$.
We call $\Gamma$ \emph{$(d_L,d_R)$-biregular} if the degree of any vertex in $L$ is $d_L$ and the degree of any vertex in $R$ is $d_R$.
Given a pair of sets $S \subseteq L$ and $T \subseteq R$ of vertices, we denote by $e(S,T)$ the number of edges in $\Gamma$ with endpoints in $S$ and $T$.

\begin{res}[Bipartite expander mixing lemma {\cite[Theorem 5.1]{Haemers}}]
 \label{Res:Bip EML}
 Let $\Gamma$ be a $(d_L,d_R)$-biregular bipartite graph on $L \cup R$.
 Take $S \subseteq L$ and $T \subseteq R$.
 Then
 \[
  \left( e(S,T) - \frac{d_R}{|L|} |S| |T| \right)^2 \leq \lambda_2^2 |S| \frac{|L|-|S|}{|L|} |T| \frac{|R|-|T|}{|T|}.
 \]
\end{res}

\subsection{Eigenvalues of abelian Cayley (multi)graphs}
 \label{Sec:Eigval Cayley}

Suppose that $G$ is an abelian group.
Let $M$ be a multiset with $G$ as ground set.
Formally, this means that $M$ is defined by a multiplicity function $\mu: G \to \NN$.
Then $M$ defines a directed multigraph $\Gamma$ with vertex set $G$, where the arc $(g,h)$ has multiplicity $\mu(h-g)$.
We call $\Gamma$ a directed \emph{Cayley multigraph}, and denote it as $\Gamma = \Cay(G,M)$ or $\Cay(G,\mu)$.
Note that $\Cay(G,M)$ is a simple, loopless, undirected graph if and only if $M$ is an ordinary set that is closed under taking inverses, and $M$ does not contain the identity element.
In this case, we call $\Cay(G,M)$ a \emph{Cayley graph}.

The adjacency matrix of a directed Cayley multigraph $\Gamma = \Cay(G,M)$ is defined as the matrix
\[
 A_\Gamma: G \times G \to \NN: (g,h) \mapsto \mu(g-h).
\]
The characters of $G$ constitute an orthogonal basis of eigenvectors of $A_\Gamma$, see e.g.\ \cite[Theorem 1]{Liu=Zhou}.
Indeed, if $\chi:G \to \CC$ is a character of $G$, then
\[
 (A_\Gamma \chi)(g) = \sum_{h \in G} A_\Gamma(g,h) \chi(h) = \sum_{h \in G} \mu(h-g) \chi(h) = \sum_{h' \in G} \mu(h') \chi(g+h') = \left( \sum_{h' \in G} \mu(h') \chi(h') \right) \chi(g).
\]
It follows that $\chi$ is an eigenvector of $A_\Gamma$ with eigenvalue $\sum_{h \in G} \mu(h) \chi(h)$, which we denote by $\chi(\Gamma)$.
It is also well known that the group of characters $\hat G$ is isomorphic to $G$, and there is a way to write $\hat G = \sett{\chi_g}{g \in G}$ such that $G \to \hat G: g \mapsto \chi_g$ is a group isomorphism.

In case $G = \FF_q^k$ with $q = p^h$ and $p$ prime, then the characters can be defined as follows.
Let $T:\FF_q\to \FF_p$ denote a non-zero $\FF_p$-linear functional, and let $\zeta \in \CC$ be a primitive $p$\th root of unity.
Then we can take $\chi_v: \FF_q^k \to \CC: w \mapsto \zeta^{T(v^\top w)}$.
Now consider the special case where $M$ is invariant under non-zero scalar multiplication, i.e.\ $\mu(\alpha v) = \mu(v)$ for all non-zero scalars $\alpha \in \FF_q^*$ and vectors $v \in \FF_q^k$.
Let $G'$ be a subset of $\FF_q^k$ that contains a unique scalar multiple of each non-zero vector of $\FF_q^k$, i.e.\ a unique vector representative for each point of $\pg(k-1,q)$.
Then
\begin{equation}
 \label{Eq:Characters}
 \chi_v(\Gamma) = \mu(\zero) + \sum_{w \in G'} \sum_{\alpha \in \FF_q^*} \mu(\alpha w) \chi_v(\alpha w) = \mu(\zero) + \sum_{w \in G'} \mu(w) \sum_{\alpha \in \FF_q^*} \zeta^{T(\alpha v^\top w)}.
\end{equation}
If we consider $\sum_{\alpha \in \FF_q} \zeta^{T(\alpha v^\top w)}$, then either $v^\top w = 0$, and every term in the sum is one, or $v^\top w \neq 0$,  and $\alpha v^\top w$ takes every value in $\FF_q$ exactly once.
In that case, the sum sees every power of $\zeta$ exactly $q/p$ times, which makes the sum equal to zero.
Considering that (\ref{Eq:Characters}) omits the term for $\alpha = 0$, we find that
\begin{equation}
 \label{Eq:Characters 2}
 \chi_v(\Gamma) =  \mu(\zero) + (q-1)\sum_{w \in G' \cap v^\perp} \mu(w) - \sum_{w \in G' \setminus v^\perp} \mu(w)
 = \mu(\zero) + q \sum_{w \in G' \cap v^\perp} \mu(w) - \sum_{w \in G'} \mu(w).
\end{equation}
If $G$ is a Cayley graph, then $M$ is an ordinary set, not containing $\zero$.
Define the set $M' = G' \cap M$ containing a unique scalar multiple of each element of $M$.
Then the above equation simplifies to
\begin{align}
 \label{Eq:Characters 3}
 \chi_v(\Gamma) = q |M' \cap v^\perp| - |M'|.
\end{align}

\subsection{Linear codes and projective systems}
 \label{Sec:Codes}

We recall some notions from coding theory.
The reader is referred to \cite{Ball} or \cite{MacWilliams:Sloane} for more background.
Let $q$ denote a prime power, and $\FF_q$ the finite field of order $q$.
A \emph{linear $[n,k]_q$-code} is a $k$-dimensional subspace $C$ of $\FF_q^n$.
Recall that $C$ is called \emph{projective} if no two coordinates of $C$ are linearly dependent, i.e.\ for every $i$ and $j$, $\sett{(c(i),c(j))}{c \in C} = \fq^2$. 
We assume all codes to be projective.
If $C' \leq C$ is a linear $[n,k']_q$ code, we call $C'$ a \emph{subcode} of $C$.
The \emph{support} of a vector $v \in \FF_q^n$ is the set
\[
 \supp(v) = \sett{i \in \{1, \dots, n\}}{ v(i) \neq 0}
\]
of coordinate positions where $v$ has a non-zero entry, and the \emph{(Hamming) weight} of $v$ is defined as $\wt(v) = |\supp(v)|$, i.e.\ the number of non-zero entries in $v$.
The \emph{minimum weight} of a linear code $C$ is defined as
\[
 d = \min \sett{\wt(c)}{c \in C \setminus \set \zero}.
\]
If we want to emphasize the minimum weight $d$ of a linear $[n,k]_q$ code $C$, we may write that $C$ is a linear $[n,k,d]_q$ code.
The \emph{Singleton bound} states that if a linear $[n,k,d]_q$ code exists, then $d \leq n-k+1$.
In case of equality, we say that $C$ is an \emph{MDS code}, which stands for maximum distance separable.
The \emph{weight distribution} of $C$ is defined as the sequence $(W_0, \dots, W_n)$, where $W_t = |\sett{c \in C}{\wt(c) = t}|$ equals the number of codewords of weight $t$.
The weight distribution of MDS codes is known.

\begin{res}[{\cite[Theorem 11.6 (page 321)]{MacWilliams:Sloane}}]
 \label{Res:Wt Dist MDS}
 Suppose that $C$ is a linear $[n,k,d = n-k+1]_q$ MDS code.
 Then its weight distribution is given by $(1,0,\dots,0,W_d, \dots, W_n)$ where for $t \geq d$,
 \[
  W_t = (q-1) \binom {n}t \sum_{j=0}^{t-d} (-1)^j \binom {t-1}j q^{t-d-j}.
 \]
\end{res}

The prototypical example of MDS codes is given by the extended Reed-Solomon codes.
Recall that if we fix a degree $k$, then for all polynomials $f(X) = \sum_{i=0}^k a_i X^i$ of degree at most $k$, we define $f(\infty) = a_k$.

\begin{df}
 \label{Df:RS}
 Consider the finite field $\FF_q$, and choose an integer $k < q$.
 Label the elements of the field so that $\FF_q = \{x_1, \dots, x_q\}$.
 Consider the map
 \[
  Ev: \fq[X]_{\leq k} \to \FF_q^{q+1}: f \mapsto (f(x_1), \dots, f(x_q,1), f(\infty)).
 \]
 Then the image of $Ev$ is a linear $[q+1,k+1]_q$ code, called the \emph{extended Reed-Solomon code}, and denoted $ERS(q,k)$.
\end{df}

We refer the reader to \cite[\S 6.2]{Ball} for more on (extended) Reed-Solomon codes.
The most important property is that they are MDS codes.

\begin{res}[see e.g.\ {\cite[Lemma 6.5]{Ball}}]
 For each $k < q$, the extended Reed-Solomon code $ERS(q,k)$ has minimum weight $q+2-k$, and hence is an MDS code.
\end{res}

We call a matrix $G \in \FF_q^{k \times n}$ a \emph{generator matrix} for the linear $[n,k]_q$ code $C$ if the rows of $G$ form a basis for $C$.
Let $P_i$ denote the 1-dimensional subspace of $\FF_q^k$ spanned by the $i$\th column of $G$.
Then $(P_1, \dots, P_n)$ can be viewed as a collection of points in the projective space $\pg(k-1,q)$.
We call $(P_1, \dots, P_n)$ a \emph{projective system} associated with $C$.
Note that all points $P_i$ are distinct because $C$ is assumed to be projective.
The set of generator matrices of $C$ equals $\sett{M G}{M \in \GL(k,q)}$.
It readily follows that the set of projective systems associated with $C$ forms an orbit of the action of $\PGL(k,q)$ on ordered $n$-tuples of points of $\pg(k-1,q)$.
We typically do not care about the order of the points of the projective system:
Two linear codes $C$ and $C'$ are said to be \emph{equivalent} if $C$ can be transformed into $C'$ by coordinate transformations and scaling in each coordinate position, i.e.\ 
\[
 C' = \sett{\big(\alpha_1 c(\sigma(1)), \ldots, \alpha_n c(\sigma(n))\big)}{c \in C}
\]
for some permutation $\sigma$ on $\set{1,\dots,n}$, and some non-zero scalars $\alpha_1, \dots, \alpha_n \in \fq^*$.
It is easy to check that two codes are equivalent if and only if they correspond to the same unordered projective system.

\bigskip

The projective systems of the extended Reed-Solomon codes will play a central role in this paper.

\begin{df}
 \label{Df:nu}
 Let $\mp_1$ and $\mp_k$ denote the sets of points of $\pg(1,q)$ and $\pg(k,q)$ respectively.
 Define the map
 \[
  \nu_k: \mp_1 \to \mp_k: (x:y) \mapsto (x^k : x^{k-1} y : \ldots : x y^{k-1} : y^k ).
 \]
 The image $\mn$ of $\nu_k$ is called the \emph{canonical normal rational curve} in $\pg(k,q)$.
 Any image of $\mn$ under the group of collineations $\PGL(k+1,q)$ is called a \emph{normal rational curve}.
\end{df}

More information on normal rational curves can be found in \cite[\S 21.1]{Hirschfeld:85} and \cite[\S 6.5]{Hirschfeld:Thas}.
Using the fact that $Ev(X^k), \dots, Ev(X), Ev(1)$ forms a basis for $ERS(q,k)$, the link between extended Reed-Solomon codes and normal rational curves follows easily.

\begin{lm}
 The projective systems associated to the extended Reed-Solomon code $ERS(q,k)$ are the normal rational curves in $\pg(k,q)$.
\end{lm}

The fact that $ERS(q,k)$ is an MDS code is equivalent to normal rational curves being so-called \emph{arcs} in $\pg(k,q)$.
This means the following.

\begin{res}[{\cite[Theorem 21.1.1 (iv)]{Hirschfeld:85}}]
 \label{Res:Arc}
 Let $\mn$ be a normal rational curve in $\pg(k,q)$.
 Any subspace of dimension $r < k$ of $\pg(k,q)$ contains at most $r+1$ points of $\mn$.
\end{res}

Lastly, we remark an important connection between a linear code $C$ and the geometry of its projective system $\ms$ arising from a generator matrix $G$ of $C$.
For a vector $v \in \FF_q^k$, let $v^\perp$ denote the subspace $\sett{w \in \FF_q^k}{v^\top w = 0}$.
We can interpret $v^\perp$ as a hyperplane of $\pg(k-1,q)$ (or the whole space if $v = \zero$).
A codeword $c \in C$ can uniquely be written as $v^\top G$ for some $v \in \FF_q^k$.
Note that $\wt(c)$ equals the number of points $P_i$ not contained in $v^\perp$.
Hence, the distribution of the points $P_1, \dots, P_n$ over the hyperplanes of $\pg(k-1,q)$ tells us the weight distribution of the code $C$.

\section{The EKR module property for linear codes}
 \label{Sec:Mod prop}

In this section we start the investigation of intersection problems for linear codes $C$.
The relation between $C$ and its projective system $\ms$ will be crucial.
As an example, the reader can keep the case where $C = ERS(q,k)$ is an extended Reed-Solomon code, and $\ms$ is a normal rational curve in mind.

We start with a simple observation.

\begin{prop}
 \label{Prop:Weak EKR:t}
 Suppose that $C$ is a linear $[n,k]_q$ code which has a linear $[n,t]_q$ MDS subcode.
 Then the largest $t$-intersecting families of $C$ have size $q^{k-t}$.
\end{prop}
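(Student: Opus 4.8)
The plan is to exhibit, on the one hand, a $t$-star of size exactly $q^{k-t}$ (so the bound is attained), and on the other hand to show no $t$-intersecting family can exceed this. For the lower bound, fix a generator matrix $G$ of $C$, let $\ms = (P_1, \dots, P_n)$ be the associated projective system, and consider the star $\mf = \sett{c \in C}{c(i_1) = \dots = c(i_t) = 0}$ for suitably chosen coordinate positions $i_1, \dots, i_t$. Writing $c = v^\top G$, membership in $\mf$ is the condition that $v$ lies in the orthogonal complement of the $t$ columns $P_{i_1}, \dots, P_{i_t}$ of $G$. If these columns are linearly independent, this is a subspace of $\FF_q^k$ of dimension $k-t$, giving $|\mf| = q^{k-t}$; and one can always choose $t$ linearly independent columns since $C$ (having a $t$-dimensional subcode, indeed being $k$-dimensional with $k \geq t$) has rank $k$, so $G$ has $k$ linearly independent columns. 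This $\mf$ is a $t$-star, hence $t$-intersecting.

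For the upper bound, the natural route is to mimic \Cref{Res:Weak EKR}: let $C' \leq C$ be the hypothesised linear $[n,t]_q$ MDS subcode, and let $\mf$ be any $t$-intersecting family in $C$. Fix $c_0 \in \mf$ and consider the coset map $c \mapsto c - c_0$. For any other $c \in \mf$, the difference $c - c_0$ agrees with $\zero$ in at least $t$ positions, i.e.\ $\wt(c - c_0) \leq n - t$. Now I want to compare cosets of $C'$: the claim is that two elements $c, c'$ of $\mf$ lying in the same coset of $C'$ must be equal. Indeed, $c - c' \in C'$, and $c - c'$ vanishes in at least $t$ coordinates (the $t$ common agreement positions of $c$ and $c'$ with each other — wait, $t$-intersecting gives $t$ positions where $c$ and $c'$ agree, so $c - c'$ has at least $t$ zeros), hence $\wt(c-c') \leq n-t$; but $C'$ has minimum weight $n - t + 1$ by the MDS/Singleton property, so $c - c' = \zero$. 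Therefore $\mf$ meets each coset of $C'$ in at most one element, and since there are $q^{k}/q^{t} = q^{k-t}$ cosets of $C'$ in $C$, we get $|\mf| \leq q^{k-t}$.

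Combining the two directions yields that the largest $t$-intersecting families have size exactly $q^{k-t}$, and the $t$-stars attain it. I expect the main (though still modest) obstacle to be the upper bound argument: one must be careful that the $t$ zero-positions of $c - c'$ really do give $\wt(c-c') \leq n-t$, and that the minimum weight of the MDS subcode $C'$ is $n - t + 1$ — this is exactly the Singleton bound met with equality, which holds by the definition of MDS code recalled in the preliminaries. A secondary point worth stating cleanly is why $\mf$ can be assumed non-empty (otherwise the statement is vacuous or one takes the convention $|\emptyset| = 0 \leq q^{k-t}$) and why the fixed $c_0$ can be translated to $\zero$ without loss of generality — translation by $-c_0$ is an automorphism of the incidence structure preserving the $t$-intersecting property, since $c$ and $c'$ agree in position $i$ iff $c - c_0$ and $c' - c_0$ agree in position $i$.
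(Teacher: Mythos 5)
Your proof is correct and follows essentially the same route as the paper: the upper bound via the observation that two codewords in the same coset of the MDS subcode $C'$ would differ by a codeword of weight at most $n-t < n-t+1$, and the lower bound via an explicit $t$-star. The only cosmetic difference is that you compute the star's size directly by choosing $t$ linearly independent columns of $G$, whereas the paper simply notes the star has co-dimension at most $t$ (so size at least $q^{k-t}$) and lets the upper bound force equality.
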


\begin{proof}
 Suppose that $C' \leq C$ is a linear $[n,t,n-t+1]_q$ MDS subcode.
 Then no two elements of any additive coset of $C'$ are $t$-intersecting.
 Therefore, a $t$-intersecting family $\mf$ in $C$ contains at most 1 element from each of the $q^{k-t}$ additive cosets of $C'$ in $C$.
 This proves that $|\mf| \leq q^{k-t}$.

 Now pick $t$ distinct coordinate positions $i_1, \dots, i_t$, and $t$ (not necessarily distinct) scalars $\alpha_1, \dots, \alpha_t \in \fq$.
 Define the $t$-star 
 \[
  \mf = \sett{c \in C}{(\forall j \in \{1, \dots, t\})(c(i_j)=\alpha_j)}.
 \]
 Then $\mf$ is a subspace of $C$ of co-dimension at most $t$, hence contains at least $q^{k-t}$ elements.
 Since it is $t$-intersecting, we must have $|\mf| = q^{k-t}$.
\end{proof}

This establishes the weak EKR property.

\begin{crl}
 \label{Prop:Weak EKR}
 If $C$ is a linear $[n,k]_q$ code, then either $C$ is an intersecting family, or the largest intersecting families have size $q^{k-1}$.
\end{crl}

\begin{proof}
 Either $C$ has no codewords of weight $n$, in which case it is an intersecting family, or it does have a codeword $c$ of weight $n$, in which case $\vspan c$ is a $[n,1]_q$ MDS subcode of $C$, and \Cref{Prop:Weak EKR:t} finishes the proof.
\end{proof}

Next, we want to investigate whether the EKR module property holds.
In order to do so, we first determine the eigenvalues of relevant graphs.

\begin{df}
 \label{Df:Gamma_T}
 \begin{enumerate}
  \item \label{Df:Gamma_T:1} Let $C$ be a linear $[n,k]_q$ code, and $T \subseteq \{0,\dots,n-1\}$. Define the graph \[
   \Gamma_T(C) = \Cay(C, \sett{c \in C}{n - \wt(c) \in T}),
  \]
  on $C$, where $c$ and $c'$ are adjacent if and only they coincide in exactly $t$ coordinates for some $t \in T$.
  \item \label{Df:Gamma_T:2} Suppose that $G$ is a generator matrix of $C$, and $H$ is a right inverse.
  For each $v \in \FF_q^k \setminus \set \zero$ and $\alpha \in \FF_q$, define the vector
  \[
   \xi_{v,\alpha}: C \to \RR: c \mapsto \begin{cases}
    1 & \text{if } c H v = \alpha, \\
    0 & \text{otherwise.}
   \end{cases}
  \]
  Note that $\xi_{v,\alpha} = \xi_{\beta v, \beta \alpha}$ for each non-zero scalar $\beta$.
 \end{enumerate}
\end{df}

Note that if $c$ is a codeword of $C$, then $c H v$ is a linear condition on $c$ in the form $\sum_{i=1}^n (Hv)(i) c(i) = \alpha$.
In particular, if $Hv$ is a standard basis vector, then the condition becomes $c(i) = \alpha$ for a certain $i$.
This means that $\xi_{v,\alpha}$ is the characteristic vector of a star.
Since $H$ is a right inverse of $G$, this happens if and only if $v$ is a column of $G$, in which case $\vspan v$ is in the projective system of $C$ arising from $G$.
Note also that since $H$ is a right inverse of $G$, $\chi_{v,\alpha}$ and $\chi_{w,\beta}$ will be linearly independent if $v$ and $w$ are linearly independent.

We will be interested in the component of $\xi_{v,\alpha}$ orthogonal to $\one$, which equals $\xi_{v,\alpha} - \frac 1q \one$.
Note that $\sett{\xi_{v, \alpha}}{\alpha \in \FF_q}$ is linearly independent, since the vectors have disjoint support.
However, their projections onto $\one^\perp$ are no longer linearly independent since
\begin{align}
 \label{Eq:xi_v alpha}
 \sum_{\alpha \in \FF_q} \left( \xi_{v,\alpha} - \frac 1q \one \right)
 = \left( \sum_{\alpha \in \FF_q} \xi_{v,\alpha} \right) - \left( \sum_{\alpha \in \FF_q} \frac 1q \one \right) = \one - \one = \zero.
\end{align}
However, since we project $q$ linearly independent vectors onto a hyperplane, their projections must still span a $(q-1)$-dimensional space, that is
\[
 \dim \vspann{\xi_{v, \alpha} - \frac 1q \one}{\alpha \in \FF_q} = q-1.
\]
Therefore, (\ref{Eq:xi_v alpha}) describes the only linear dependence between the vectors, and any subset $q-1$ of the vectors $\xi_{v,\alpha} - \frac 1q \one$ must be linearly independent.

\begin{prop}
 \label{Prop:Eigval Gamma_T}
 Let $C$ be a linear $[n,k]_q$ code with generator matrix $G$, and projective system $\ms$.
 Choose $T \subseteq \{0, \dots, n-1\}$.
 Then the set $\set \one \cup \sett{\xi_{v,\alpha} - \frac 1q \one}{v \in \FF_q^k \setminus \zero, \, \alpha \in \FF_q^*}$ is a basis of eigenvectors for $A_{\Gamma_T(C)}$.
 Let $\mm$ be the set of hyperplanes $\Pi$ with $|\Pi \cap \ms| \in T$.
 Then the eigenvalue of $\one$ equals $(q-1)|\mm|$, and the eigenvalue of $\xi_{v,\alpha} - \frac 1q \one$ equals $q |\sett{\Pi \in \mm}{\vspan v \in \Pi}| - |\mm|$.
\end{prop}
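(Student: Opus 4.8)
The plan is to recognize $\Gamma_T(C)$ as an abelian Cayley graph on the additive group $C \cong \FF_q^k$ and apply the character-theoretic computation from \Cref{Sec:Eigval Cayley}. First I would identify $C$ with $\FF_q^k$ via $v \mapsto v^\top G$ (equivalently, via the right inverse $H$, the map $c \mapsto cH$), so that a codeword $c$ corresponds to the vector $v$ with $v^\top G = c$. Under this identification the connection set $\sett{c \in C}{n - \wt(c) \in T}$ becomes a subset $M$ of $\FF_q^k$ that is invariant under multiplication by $\FF_q^*$: indeed $\wt(\beta c) = \wt(c)$, so the condition only depends on the projective point $\vspan v$. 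Recall from the last paragraph of \Cref{Sec:Codes} that $n - \wt(c)$ is exactly the number of points of $\ms$ lying on the hyperplane $v^\perp$; hence $n - \wt(c) \in T$ iff $v^\perp \in \mm$, so $M = \sett{v \in \FF_q^k \setminus \zero}{v^\perp \in \mm}$ and $M$ is indeed $\FF_q^*$-invariant (and does not contain $\zero$, since $T \subseteq \{0,\dots,n-1\}$ forces us to only count genuine hyperplanes; if $n \in T$ it would be handled separately, but the hypothesis excludes it).

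Next I would invoke Equation \eqref{Eq:Characters 3}: for the Cayley graph $\Gamma = \Gamma_T(C)$, picking a set $M'$ of projective representatives of $M$, the character $\chi_v$ is an eigenvector with eigenvalue $q|M' \cap v^\perp| - |M'|$. Now $|M'| = |\mm|$ and $|M' \cap v^\perp|$ counts the representatives $w$ with $w^\top v = 0$, i.e.\ the hyperplanes $w^\perp \in \mm$ passing through the point $\vspan v$; so the eigenvalue for $\chi_v$ is $q\,|\sett{\Pi \in \mm}{\vspan v \in \Pi}| - |\mm|$. For $v = \zero$ this gives $\chi_\zero = \one$ (the trivial character) with eigenvalue $q|\mm| - |\mm| = (q-1)|\mm|$, matching the claim. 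The remaining task is to translate the eigenbasis of characters $\set{\chi_v}$ into the stated real eigenbasis $\set\one \cup \sett{\xi_{v,\alpha} - \frac1q\one}{v \neq \zero,\ \alpha \in \FF_q^*}$. The point is that the eigenvalue depends only on the projective point $\vspan v$, so the whole $\lambda$-eigenspace is spanned by those $\chi_w$ with $\vspan w$ ranging over the points giving that eigenvalue. For a single point $\vspan v$, the $q-1$ characters $\chi_{\alpha v}$ ($\alpha \in \FF_q^*$) together with the trivial character span the same $q$-dimensional space $W_v$ as the $q$ indicator vectors $\set{\xi_{v,\alpha} : \alpha \in \FF_q}$ of the parallel affine hyperplanes $\{cHv = \alpha\}$ — this is the standard Fourier/indicator duality on the cyclic quotient $\FF_q^k / v^\perp \cong \FF_q$, pulled back to $C$. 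Since all of $\chi_{\alpha v}$ ($\alpha\neq\zero$) lie in a fixed eigenspace and $\chi_\zero = \one$ is separated off, the space $W_v \cap \one^\perp = \vspann{\xi_{v,\alpha} - \frac1q\one}{\alpha \in \FF_q}$ (which has dimension $q-1$, as established just before the proposition) is contained in that eigenspace; consequently each $\xi_{v,\alpha} - \frac1q\one$ is an eigenvector with the claimed eigenvalue. Finally I would check that these vectors, together with $\one$, actually form a basis: one selects $q-1$ of the $\alpha$'s for each projective point $\vspan v$, giving $(q-1)\cdot\frac{q^k-1}{q-1} + 1 = q^k = \dim C$ vectors, and their linear independence follows because the corresponding characters $\set{\chi_{\alpha v}}$ (one redundancy per point removed) plus $\chi_\zero$ are linearly independent, spanning the same space.

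The main obstacle is the bookkeeping in this last step: verifying that the real vectors $\xi_{v,\alpha} - \frac1q\one$ genuinely diagonalize the (real, symmetric) adjacency matrix and form a basis, rather than merely lying in the correct complex eigenspaces. Concretely one must (i) confirm $\xi_{v,\alpha}$ is constant on cosets of $v^\perp$ under the identification, so it lies in $W_v = \operatorname{span}\set{\chi_{\alpha v} : \alpha \in \FF_q}$; (ii) use that $A_{\Gamma_T(C)}$ acts as the scalar $q\,|\sett{\Pi\in\mm}{\vspan v\in\Pi}| - |\mm|$ on all of $W_v \cap \one^\perp$; and (iii) count dimensions to get a basis, invoking the remark preceding the proposition that any $q-1$ of the $\xi_{v,\alpha} - \frac1q\one$ (for fixed $v$) are independent and that $\xi_{v,\alpha}$, $\xi_{w,\beta}$ are independent for $\vspan v \neq \vspan w$ since $H$ is a right inverse of $G$. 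None of these is deep, but they need to be assembled carefully; the genuinely substantive content — the formula for the eigenvalues — is already delivered by \eqref{Eq:Characters 3}.
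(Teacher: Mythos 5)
Your proposal is correct and follows essentially the same route as the paper: identify $C$ with $(\FF_q^k,+)$ via $v \mapsto v^\top G$, observe that the connection set is $\FF_q^*$-invariant and its projective representatives correspond bijectively to the hyperplanes of $\mm$, apply the character eigenvalue formula from \Cref{Sec:Eigval Cayley}, and then convert the character eigenbasis into the vectors $\xi_{v,\alpha}-\frac1q\one$ by noting that, under the identification, both the characters $\chi_{\alpha v}$ and the indicators $\xi_{v,\alpha}\circ f$ span the $q$-dimensional space of functions depending only on $v^\top w$ (you argue one containment, the paper the other; with the dimension count these are interchangeable). The bookkeeping items (i)--(iii) you flag are exactly the steps carried out in the paper's proof, so no gap remains.
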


\begin{proof}
 The group $(C,+)$ is isomorphic to $(\FF_q^k,+)$.
 An explicit isomorphism is given by $f: \FF_q^k \to C: v \mapsto v^\top G$.
 Note that if $H$ is a right inverse for $G$, then $f^{-1}(c) = (cH)^\top$.
 Therefore, $\Gamma_T(C)$ is isomorphic to the Cayley graph $\Gamma' = \Cay(\FF_q^k,S_T)$ with
 \[
  S_T =  \sett{v \in \FF_q^k}{n - \wt(v^\top G) \in T}.
 \]
 It is easy to check that $S_T$ is closed under taking non-zero scalar multiples.
 Hence, we can use (\ref{Eq:Characters 2}) to determine the eigenvalues of $\Gamma'$.
 Construct the set $S'_T$ by choosing a unique scalar multiple of every vector in $S_T$.
 Note that there is a one-to-one correspondence between the elements of $S'_T$ and the elements of $\mm$.
 Indeed, $v \in S_T$ if and only if $v^\perp \in \mm$.
 Since $S'_T$ contains a unique scalar multiple of every vector of $S_T$, for every hyperplane $\Pi \in \mm$, there is a unique $v \in S'_T$ with $\Pi = v^\perp$.

 Take a non-trivial character $\chi_v$ of $\FF_q^k$, as defined in \Cref{Sec:Eigval Cayley}.
 Then by (\ref{Eq:Characters 2}),
 \[
  \chi_v(S_T) = q |S'_T \cap v^\perp| - |S_T'|
   = q |\sett{\Pi \in \mm}{v \in \Pi} - |\mm|.
 \]
 It follows that $\chi_v \circ f^{-1}$ must form a basis of eigenvectors of $A_{\Gamma_T(C)}$.
 To finish the proof, it suffices to show that for every non-zero vector $v \in \FF_q^k$,
 \[
  \vspann{\chi_{\alpha v} \circ f^{-1}}{\alpha \in \FF_q^*}
  = \vspann{\xi_{v,\alpha} - \frac 1q \one}{\alpha \in \FF_q^*},
 \]
 or equivalently that
 \[
  \vspann{\chi_{\alpha v}}{\alpha \in \FF_q^*}
  = \vspann{\left( \xi_{v,\alpha} - \frac 1q \one \right) \circ f}{\alpha \in \FF_q^*}
 \]
 Since all vectors listed above are orthogonal to $\one$, it suffices to show that
 \[
  \vspan{\one, \vspann{\chi_{\alpha v}}{\alpha \in \FF_q^*}}
  = \vspan{\one,\vspann{\left( \xi_{v,\alpha} - \frac 1q \one \right) \circ f}{\alpha \in \FF_q^*}}.
 \]
 This means that
 \[
  \vspann{\chi_{\alpha v}}{\alpha \in \FF_q} = \vspann{\xi_{v,\alpha} \circ f}{\alpha \in \FF_q}.
 \]
 Since both spaces have dimension $q$, it suffices to check that the left one is a subspace of the right one.
 Note that $\xi_{v,\alpha} \circ f(w)$ equals 1 if $w^\top G H v = w^\top v = \alpha$, and 0 otherwise.
 It follows that $\vspann{\xi_{v,\alpha} \circ f}{\alpha \in \FF_q}$ is the space of functions $g:\FF_q^k \to \CC$ for which $g(w)$ only depends on $v^\top w$.
 The fact that $\chi_{\alpha v}(w) = \zeta^{T(\alpha v \cdot w)}$ only depends on $v^\top w$ finishes the proof.
\end{proof}

Now we are ready to describe how the EKR module property holds, possibly after a slight alteration.

\begin{df}
 Suppose that $C$ is a linear $[n,k]_q$ code with generator matrix $G$.
 Choose an integer $n^+ \geq n$.
 Let $G^+$ be a $k \times n^+$ matrix over $\FF_q$ that has $G$ as a submatrix.
 Let $C^+$ be the rowspace of $G^+$.
 Then we call $C^+$ an \emph{extension} of $C$.
 For each codeword $c = v^\top G$ of $C$, we call $c^+ = v^\top G^+$ the \emph{extension} of $c$.
 Note that every projective system $\ms^+$ of $C^+$ has a subset $\ms$ which is a projective system of $C$.
\end{df}

\begin{prop}
 \label{Prop:Module prop}
 Let $C$ be a linear $[n,k]_q$ code with projective system $\ms$.
 Assume that $C$ is not an intersecting family.
 \begin{enumerate}
  \item \label{Prop:Module prop:1} $C$ has the EKR module property if and only if every point not on $\ms$ in $\pg(k-1,q)$ is incident with a hyperplane that doesn't intersect $\ms$.
  \item \label{Prop:Module prop:2} There exists an extension $C^+$ of $C$, unique up to code equivalence, such that two codewords of $C$ intersect if and only if their extensions in $C^+$ intersect, and $C^+$ has the EKR module property.
 \end{enumerate}
\end{prop}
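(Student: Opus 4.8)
The plan is to analyse the Cayley graph $\Gamma_0(C)$ from \Cref{Df:Gamma_T} (the case $T=\set 0$), whose vertex set is $C$ and whose cocliques are exactly the intersecting families in $C$. By \Cref{Prop:Eigval Gamma_T}, writing $\mm$ for the set of hyperplanes disjoint from $\ms$, the graph $\Gamma_0(C)$ is $(q-1)|\mm|$-regular on $q^k$ vertices, $\one$ is an eigenvector with eigenvalue $(q-1)|\mm|$, and each $\xi_{v,\alpha}-\frac1q\one$ is an eigenvector with eigenvalue $q\,|\sett{\Pi\in\mm}{\vspan v\in\Pi}|-|\mm|\geq -|\mm|$, with equality precisely when $\vspan v$ lies on no hyperplane of $\mm$. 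Since $C$ is not intersecting, $\mm\neq\emptyset$ (a hyperplane disjoint from $\ms$ corresponds to a codeword of weight $n$), so the least eigenvalue of $\Gamma_0(C)$ is exactly $-|\mm|$. The Hoffman ratio bound then yields $|\mf|\leq q^{k-1}$ for every intersecting family $\mf$ (in agreement with \Cref{Prop:Weak EKR}), and, crucially, equality holds iff $\chi_\mf$ lies in $\vspan\one$ plus the $(-|\mm|)$-eigenspace. Write $\mz$ for the set of points of $\pg(k-1,q)$ on no hyperplane of $\mm$; then $\ms\subseteq\mz$, the $(-|\mm|)$-eigenspace equals $\vspann{\xi_{v,\alpha}-\frac1q\one}{\vspan v\in\mz,\,\alpha\in\FF_q^*}$, while the span $\mathcal T$ of the characteristic vectors of all stars equals $\vspan\one$ plus $\vspann{\xi_{v,\alpha}-\frac1q\one}{\vspan v\in\ms,\,\alpha\in\FF_q^*}$ (the star $\sett{c\in C}{c(i)=\alpha}$ has characteristic vector $\xi_{v_i,\alpha}$ with $\vspan{v_i}=P_i\in\ms$, and $\sum_{\alpha\in\FF_q}\xi_{v_i,\alpha}=\one$).

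For part (1), if every point off $\ms$ lies on some hyperplane of $\mm$ then $\mz=\ms$, so $\mathcal T$ coincides with $\vspan\one$ plus the $(-|\mm|)$-eigenspace; as $C$ is not intersecting, maximum intersecting families have size $q^{k-1}$ by \Cref{Prop:Weak EKR}, so the Hoffman equality condition places $\chi_\mf\in\mathcal T$ for every such $\mf$, i.e.\ the EKR module property holds. Conversely, suppose $P\in\mz\setminus\ms$; fix a representative $v$ of $P$ and a scalar $\alpha\in\FF_q$, and set $\mf=\sett{c\in C}{cHv=\alpha}$, an affine hyperplane of $C$, so $|\mf|=q^{k-1}$. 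If $c,c'\in\mf$, then $c-c'$ corresponds to a vector $u$ with $u^\top v=0$, equivalently to a hyperplane $u^\perp$ through $P$; $c$ and $c'$ intersect iff $u^\perp$ meets $\ms$, which it does because $P\in\mz$ forces every hyperplane through $P$ to meet $\ms$. Hence $\mf$ is a maximum intersecting family. But $\chi_\mf=\xi_{v,\alpha}=\frac1q\one+\left(\xi_{v,\alpha}-\frac1q\one\right)$, and $\xi_{v,\alpha}-\frac1q\one$ is a nonzero vector in $\vspann{\xi_{v,\beta}-\frac1q\one}{\beta\in\FF_q^*}$, a subspace meeting $\mathcal T$ only in $\zero$ since $P\notin\ms$; so $\chi_\mf\notin\mathcal T$ and the EKR module property fails.

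For part (2), take $\ms^+=\mz$ and let $G^+$ be a generator matrix whose columns list those of $G$ followed by representatives of the points of $\mz\setminus\ms$; let $C^+$ be its rowspace, a projective $[\,|\mz|,k\,]_q$ extension of $C$. A hyperplane is disjoint from $\ms^+=\mz$ iff it is disjoint from $\ms$: one implication holds as $\ms\subseteq\mz$, and any hyperplane of $\mm$ avoids all of $\mz$ by definition of $\mz$. Thus $C^+$ has exactly $\mm$ as its set of non-intersecting hyperplanes; in particular $C^+$ is not intersecting (a hyperplane of $\mm$ produces a codeword of weight $|\mz|$), and for codewords of $C$ whose difference corresponds to $u$, intersecting (i.e.\ $u^\perp$ meets $\ms$) is equivalent to $u^\perp$ meeting $\mz=\ms^+$, i.e.\ to the extensions intersecting. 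Since the set of points on no hyperplane of $\mm$ is $\mz=\ms^+$, part (1) gives that $C^+$ has the EKR module property. For uniqueness, let $\widetilde C$ be any extension of $C$ with projective system $\widetilde\ms\supseteq\ms$ enjoying the two stated properties. If some $Q\in\widetilde\ms\setminus\ms$ lay on a hyperplane $\Pi=u^\perp\in\mm$, then the codewords $u^\top G$ and $\zero$ of $C$ do not intersect whereas their extensions do (as $Q\in u^\perp$), a contradiction; hence $\widetilde\ms\subseteq\mz$. Arguing as above, $\widetilde C$ then has $\mm$ as its set of non-intersecting hyperplanes and is not intersecting, so the EKR module property for $\widetilde C$ forces, via part (1), $\mz\subseteq\widetilde\ms$. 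Therefore $\widetilde\ms=\mz$, and since codes with the same unordered projective system are equivalent, $\widetilde C$ is equivalent to $C^+$.

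The one step needing genuine care is the bookkeeping tying the bottom eigenspace of $\Gamma_0(C)$ to the star-space $\mathcal T$ --- equivalently, identifying $\mz$ as precisely the set of points $P$ for which the affine-hyperplane family $\sett{c\in C}{cHv=\alpha}$ (with $\vspan v=P$) is intersecting; once $\mz$ is pinned down, the existence of $C^+$, its EKR module property, and its uniqueness all follow by bookkeeping with \Cref{Prop:Eigval Gamma_T} and part (1).
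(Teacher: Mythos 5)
Your proof is correct and follows essentially the same route as the paper: both parts rest on the eigenvalue computation of \Cref{Prop:Eigval Gamma_T} for $\Gamma_{\{0\}}(C)$, the Hoffman ratio bound with its equality condition, and the choice $\ms^+=\mz$ (the points on no hyperplane of $\mm$) for the extension. The one place you go beyond the paper's write-up is the converse direction of (1), where you explicitly exhibit the maximum intersecting family $\sett{c\in C}{cHv=\alpha}$ for a point $P=\vspan v\in\mz\setminus\ms$ and check that its characteristic vector escapes the star span; the paper asserts this equivalence directly from the eigenspace comparison, so your version actually supplies a detail the published proof leaves implicit.
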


\begin{proof}
Choose a generator matrix $G$ for $C$, and let $H$ be a right inverse of $G$.
Denote the $i$\th column of $G$ by $g_i$.

Consider the graph $\Gamma = \Gamma_{\{0\}}(C)$ on $C$, where two codewords are adjacent if and only if they are not intersecting.
Then intersecting families in $C$ are equivalent to cocliques in $\Gamma$.
Let $\mm$ be the set of hyperplanes that don't intersect $\ms$.
The assumption that $C$ is not an intersecting family is equivalent to $\mm$ not being empty.
We can determine the eigenvalues of $\Gamma$ using \Cref{Prop:Eigval Gamma_T}.
The eigenvalue of $\one$ equals $(q-1)|\mm|$, and the other eigenvalues equal $q|\sett{\Pi \in \mm}{P \in \Pi}| - |\mm|$, with $P$ the points of $\pg(k-1,q)$.
This value is minimal if $P$ is not on any hyperplanes of $\mm$, which happens for the points of $\ms$.
Hence, the Hoffman ratio bound tells us that if $\mf$ is an intersecting family, then
\[
 |\mf| \leq \frac{q^k}{\frac{(q-1)|\mm|}{|\mm|}+1} = q^{k-1},
\]
with equality if and only if the characteristic vector $\chi_\mf$ of $\mf$ is in the span of $\one$ and the $-|\mm|$-eigenspace of $A_\Gamma$.

Now consider a star $\mf_{i,\alpha} = \sett{c \in C}{c(i) = \alpha}$.
It is easy to check that $\chi_{\mf_{i,\alpha}} = \xi_{g_i,\alpha}$, with $\xi_{g_i, \alpha}$ as in \Cref{Df:Gamma_T} (\ref{Df:Gamma_T:1}).
It follows that the EKR module property holds if and only if the $-|\mm|$-eigenspace is spanned by the vectors $\xi_{g_i,\alpha}-\frac 1q \one$, with $i\in\{1,\dots,n\}$ and $\alpha \in \FF_q$.
By \Cref{Prop:Eigval Gamma_T}, this is equivalent to the points of $\ms$ being the only ones that are not incident to any hyperplane of $\mm$.
This proves (\ref{Prop:Module prop:1}).

Now suppose that $\ms^+$ is the set of all points that are not on any hyperplane of $\mm$.
Then clearly $\ms \subseteq \ms^+$.
Let $C^+$ be the extension of $C$ corresponding to projective system $\ms^+$ (note that $C^+$ is only defined up to code equivalence).
Then $c = v^\top G$ and $c' = w^\top G$ in $C$ are intersecting if and only if $c-c'$ has no zeros, or equivalently, the hyperplane $(v-w)^\perp$ doesn't contain any point of $\ms$.
Then by the definition of $\ms^+$, $(v-w)^\perp$ doesn't contain any point of $\ms^+$.
Hence, codewords in $C$ are intersecting if and only if their extensions in $C^+$ are intersecting.
Furthermore, by construction of $\ms^+$, every point of $\pg(k-1,q)$ not in $\ms^+$ is incident to some hyperplane of $\mm$.
This implies that $C^+$ has the EKR module property.

Note also that $\ms^+$ is the only projective system extending $\ms$ yielding the desired code $C^+$.
Any other projective system $\ms'$ either contains some point outside of $\ms^+$, which then implies that some codewords $c$ and $c'$ are not intersecting, but their extensions are, or $\ms'$ is a proper subset of $\ms^+$, in which case the EKR module property doesn't hold.
This proves (\ref{Prop:Module prop:2}).
\end{proof}

\begin{crl}
 \label{Crl:Small proj sys}
 Let $C$ be a linear $[n,k]_q$ code with $k \geq 4$ and $n \leq q+1$.
 Then the EKR module property holds.
\end{crl}

\begin{proof}
 Let $\ms$ be the projective system associated to $C$ in $\pg(k-1,q)$.
 Take a point $P \notin \ms$.
 Let $\ms'$ be the projection of $\ms$ from $P$ as a set of points in $\pg(k-2,q)$.
 Then $|\ms'| \leq q+1$.
 If $\ms'$ would intersect every hyperplane of $\pg(k-2,q)$, then $\ms'$ would need to consist of the points on a line by \cite{Bose:Burton}.
 However, then $\ms$ would be contained in a plane through $P$, and cannot span the whole space $\pg(k-1,q)$, a contradiction.
 We conclude that some hyperplane through $P$ does not intersect $\ms$.
\end{proof}

In particular, this implies that the EKR module property holds for the extended Reed-Solomon codes $ERS(q,k)$ with $q > k \geq 3$.
However, it does not hold for $k=3$ if $q$ is even, as shown in the following example.

\begin{ex}
 \label{Ex:Nucleus}
 We consider the extended Reed-Solomon code $C = ERS(q,2)$ with $q > 2$ an even prime power.
 The associated projective system $\ms$ is a normal rational curve, see \Cref{Df:nu}, which in our case is given by
 \[
  \ms = \sett{(1:x:x^2)}{x \in \fq} \cup \{(0:0:1)\}.
 \]
 Then $\ms$ is the quadric defined by equation $Y^2 = XZ$.
 Since $q$ is even, $\ms$ has a \emph{nucleus}, namely the point $N = (0:1:0)$, see e.g.\ \cite[Chapter 8]{Hirschfeld:79}.
 Every line through $N$ intersects $\ms$ in a unique point.
 Therefore, $N$ is not incident with any of the hyperplanes (which in this case are lines) that don't intersect $\ms$.
 The unique extension $C^+$ of $C$ described in \Cref{Prop:Module prop} (\ref{Prop:Module prop:2}) arises from extending $\ms$ to $\ms^+ = \ms \cup \set{N}$.

 Algebraically, this corresponds to the fact that if $f = aX^2 + bXY + cY^2$ and $g = a'X^2 + bXY + c'Y^2$ are polynomials in $\hom 2$ with the same $XY$-coefficient $b$, then $f-g = (a-a') X^2 + (c-c') Y^2= \left( \sqrt{a-a'} X + \sqrt{c-c'} Y \right)^2$.
 Note that square roots are well-defined in fields of even order.
 Since $\sqrt{a-a'} X + \sqrt{c-c'} Y$ has a non-zero root, $f$ and $g$ are intersecting.
 This explains why in \Cref{Res:Hom Pol}, we need to also consider the intersecting families 
 \[ \sett{aX^2 + bXY + cY^2}{a,c \in \fq}.
 \]
\end{ex}

Given a linear code $C$, we established geometric properties of its projective system $\ms$, which are equivalent to the weak EKR and EKR module property for $C$.

\begin{prob}
Is there a geometric property of $\ms$ that is equivalent to the strict EKR property for $C$?
\end{prob}

The answer is not obvious.
In the next section, we give a sufficient condition for the strict EKR property to hold, and in fact to even prove stability results.

\section{The strict EKR property and beyond for linear codes}
 \label{Sec:Strict prop}

As discussed in \Cref{Sec:Overview}, this section consists of four parts.
We consider a linear code $C$, and a large intersecting family $\mf$ in $C$.
Let $\mf_{i,\alpha}$ denote the star $\sett{c \in C}{c(i) = \alpha}$.
In \Cref{Sec:Few or many}, we prove that every star shares either few or many codewords with $\mf$, excluding a middle spectrum for the possible intersection sizes.
In \Cref{Sec:More than few}, we prove that some star $\mf_{i,\alpha}$ intersects $\mf$ in ``more than few'' codewords, which then implies that it shares many codewords with $\mf$.
If $\mf$ is large enough, this suffices to conclude that $\mf \subseteq \mf_{i,\alpha}$.
In \Cref{Sec:Strict sub}, we use these results to prove \Cref{Thm:Codes} (\ref{Thm:Codes:3}), which is a sufficient condition for the strict EKR property to hold in $C$.
In \Cref{Sec:Stab}, we prove \Cref{Thm:Stability}, which yields a stability theorem for large intersecting families in $C$.

\subsection{Intersecting every star in few or many codewords}
 \label{Sec:Few or many}

 In this section, we will prove the following theorem.

\begin{thm}
 \label{Thm:Few or many}
 Consider a linear $[n,k]_q$ code $C$ with projective system $\ms$.
 Suppose that $\mf$ is an intersecting family in $C$.
 Choose $i \in \{1,\dots,n\}$ and $\alpha \in \FF_q$, and let $\mf_{i,\alpha}$ be the star $\sett{c \in C}{c(i) = \alpha}$.
 Let $Q \in \ms$ be the point corresponding to coordinate position $i$.
 Define $\mm$ to be the set of hyperplanes of $\pg(k-1,q)$ that don't intersect $\ms$.
 For each line $\ell$ through $Q$, define
 \[
  \lambda_\ell = q \sum_{P \in \ell \setminus \set Q} \left( |\sett{\Pi \in \mm}{P \in \Pi}| - \frac{|\mm|}{q} \right)^2,
 \]
 and let $\lambda$ be the maximum value of $\lambda_\ell$ over all lines $\ell$ through $Q$.
 Then
 \[
  |\mf \cap \mf_{i,\alpha}| \cdot |\mf \setminus \mf_{i,\alpha}| \leq \lambda \left(\frac{q^{k-1}}{|\mm|}\right)^2
 \]
\end{thm}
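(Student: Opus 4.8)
The plan is to realise the condition ``$\mf$ is intersecting'' as a coclique condition in a biregular bipartite graph and then invoke \Cref{Res:Bip EML}. Fix $i$ and $\alpha$, put $L = \mf_{i,\alpha}$ and $R = C \setminus \mf_{i,\alpha}$, and let $B = B(C,i,\alpha)$ be the bipartite graph on $L \cup R$ in which $c \in L$ and $c' \in R$ are adjacent exactly when $c$ and $c'$ are not intersecting; equivalently, $B$ is obtained from $\Gamma_{\{0\}}(C)$ by deleting every edge not crossing the bipartition. Any intersecting family is a coclique in $\Gamma_{\{0\}}(C)$, hence in $B$, so with $S = \mf \cap \mf_{i,\alpha}$ and $T = \mf \setminus \mf_{i,\alpha}$ we have $e(S,T) = 0$. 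Since $\mf_{i,\alpha}$ is itself intersecting, $L$ spans no edge of $\Gamma_{\{0\}}(C)$, so each $c \in L$ has all $(q-1)|\mm|$ of its $\Gamma_{\{0\}}(C)$-non-neighbours (recall $\Gamma_{\{0\}}(C)$ is regular of this degree by \Cref{Prop:Eigval Gamma_T}) inside $R$; using in addition that the connection set of $\Gamma_{\{0\}}(C)$ is closed under non-zero scalar multiplication, one checks that $B$ is $\big((q-1)|\mm|,|\mm|\big)$-biregular, with $|L| = q^{k-1}$ and $|R| = (q-1)q^{k-1}$. (We may assume $\mm \neq \emptyset$, as otherwise $C$ is intersecting and there is nothing to prove.)

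The heart of the argument is to identify the second singular value of the biadjacency matrix $N$ of $B$ with $\sqrt{\lambda}$. I would do this by translating $B$ so that $L$ becomes the subcode $C_i = \sett{c \in C}{c(i) = 0} \cong \FF_q^{k-1}$ and observing that $NN^\top$, viewed as a matrix indexed by $C_i$, satisfies $(NN^\top)(c,c') = \mu(c-c')$ for a multiplicity function $\mu$ on $C_i$ that is again invariant under non-zero scalar multiplication; that is, $NN^\top$ is the adjacency matrix of a scalar-invariant Cayley multigraph on $C_i$. Its eigenvalues are therefore computed by the characters of $C_i$, exactly as in \Cref{Sec:Eigval Cayley}: the eigenvalue at the trivial character comes out as $(q-1)|\mm|^2$, i.e.\ the square of the top singular value $\sqrt{d_L d_R}$, as it must; and the eigenvalue at a non-trivial character depends only on the line $\ell$ of $\pg(k-1,q)$ through $Q$ that the character determines, and equals
\[
 q \sum_{P \in \ell \setminus \set Q} |\sett{\Pi \in \mm}{P \in \Pi}|^2 - |\mm|^2 .
\]
The key geometric input is that, because $Q \in \ms$, no hyperplane of $\mm$ passes through $Q$, so every $\Pi \in \mm$ meets $\ell$ in a single point different from $Q$; this both carries out the reduction of the relevant pair-count to $\sum_{P \in \ell \setminus \set Q} |\sett{\Pi \in \mm}{P \in \Pi}|^2$ and yields the identity $\sum_{P \in \ell \setminus \set Q} |\sett{\Pi \in \mm}{P \in \Pi}| = |\mm|$. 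Substituting this identity into the definition of $\lambda_\ell$ shows that the displayed quantity is precisely $\lambda_\ell$, so the non-trivial eigenvalues of $NN^\top$ are exactly the numbers $\lambda_\ell$ and $\lambda_2(B)^2 = \max_\ell \lambda_\ell = \lambda$.

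It then remains to feed $e(S,T) = 0$, $\lambda_2(B)^2 = \lambda$ and $d_R/|L| = |\mm|/q^{k-1}$ into \Cref{Res:Bip EML}:
\[
 \left( \frac{|\mm|}{q^{k-1}}\, |S|\, |T| \right)^2 \leq \lambda\, |S|\,\frac{|L|-|S|}{|L|}\, |T|\,\frac{|R|-|T|}{|R|} \leq \lambda\, |S|\, |T| ,
\]
and dividing by $|S||T|$ (the inequality being trivial if $S$ or $T$ is empty) gives $|S||T| \leq \lambda (q^{k-1}/|\mm|)^2$, which is the assertion since $|S| = |\mf \cap \mf_{i,\alpha}|$ and $|T| = |\mf \setminus \mf_{i,\alpha}|$.

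I expect the middle paragraph to be the main obstacle: $N$ is not itself a Cayley matrix and eigenvalue interlacing against $\Gamma_{\{0\}}(C)$ yields only inequalities, so the essential realisation is that $NN^\top$ \emph{is} a scalar-invariant Cayley multigraph on the hyperplane-subcode $C_i$, which makes the character machinery of \Cref{Sec:Eigval Cayley} applicable. The remaining work is bookkeeping: translating the resulting exponential sums into the hyperplane-counts $|\sett{\Pi \in \mm}{P \in \Pi}|$ over the points of a line through $Q$, in particular the reduction of the pair-count and the identity $\sum_{P \in \ell \setminus \set Q} |\sett{\Pi \in \mm}{P \in \Pi}| = |\mm|$.
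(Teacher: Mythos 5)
Your proposal is correct and follows essentially the same route as the paper: it reduces the statement to the spectrum of the bipartite graph $B(C,i,\alpha)$, computes that spectrum by recognising $NN^\top$ as a scalar-invariant Cayley multigraph on the subcode $\sett{c \in C}{c(i)=0}$ and applying the character machinery of \Cref{Sec:Eigval Cayley} together with the fact that every hyperplane of $\mm$ misses $Q$ (this is exactly the content of \Cref{Prop:Eigval B}), and then feeds the coclique condition into \Cref{Res:Bip EML}. The eigenvalue formula you state, the biregularity constants, and the final substitution all agree with the paper's proof, the remaining pair-count reduction being precisely the bookkeeping carried out in \Cref{Prop:Eigval B}.
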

We prove \Cref{Thm:Few or many} by an application of the bipartite expander mixing lemma to the following graph.

\begin{df}
 \label{Df:Bip graph}
 Given a linear $[n,k]_q$ code $C$, an integer $i \in \{1,\dots,n\}$, and a scalar $\alpha \in \FF_q$, define the bipartite graph $B(C,i,\alpha)$ as having vertex sets $L = \sett{c \in C}{c(i) = \alpha}$ and $R = C \setminus L$, where $c \in L$ is adjacent to $c' \in R$ if and only if $c$ and $c'$ don't intersect, i.e.\ $\wt(c-c') = n$.
\end{df}

As a first step, let us check that $B(C,i,\alpha)$ is biregular, which is a necessary condition for applying the bipartite expander mixing lemma. 

\begin{lm}
 \label{Lm:Biregular}
 Use the notation of \Cref{Df:Bip graph}.
 \begin{enumerate}
  \item \label{Lm:Biregular:Isomorphic} For $\alpha, \alpha' \in \FF_q$, the graphs $B(C,i,\alpha)$ and $B(C,i,\alpha')$ are isomorphic.
  \item \label{Lm:Biregular:Sharply} The automorphism group of $B(C,i,\alpha)$ has subgroups $G_L$ and $G_R$ that act sharply transitively on $L$ and $R$ respectively.
 Therefore, $B(C,i,\alpha)$ is biregular.
 \end{enumerate}
\end{lm}

\begin{proof}
 (1) It is easy to check that if $c \in C$ with $c(i) = \alpha'-\alpha$, then the map
 \[
  f: C \to C: c' \mapsto c'+c
 \]
 is an isomorphism from $B(C,i,\alpha)$ to $B(C,i,\alpha')$.

 (2) Using (1), it suffices to prove the statement for $\alpha = 0$.
 Define $G_L = \sett{c \in C}{c(i) = 0}$ and
 define the group $(G_R,*)  = (\FF_q^*,\cdot) \ltimes (G_L,+)$, with operation
 \[
  (\beta,c') * (\gamma,c'') = (\beta \gamma, c' + \beta c'').
 \]
 Then $G_R$ acts on $C$ as follows:
 \[
  (\beta,c): C \to C: c' \mapsto \beta c' + c.
 \]
 It is easy to see that this action stabilises $L$ and $R$, and preserves adjacency.
 If $c, c' \in R$, then $\left(\frac{c'(i)}{c(i)}, c' - \frac{c'(i)}{c(i)} c\right)$ is the unique element of $G_R$ sending $c$ to $c'$, so $G_R$ acts sharply transitively on $R$.
 Moreover, the subgroup $G_L$ of $G_R$ acts sharply transitively on $L$.
\end{proof}

We can use $B(C,i,\alpha)$ to define a distance 2 multigraph $B^2$, where the multiplicity of edge $\{c,c'\}$ equals the number of walks of length two from $c$ to $c'$.
The previous lemma implies that $B^2$ is the disjoint union of two Cayley multigraphs, one of which is abelian.
This allows us to determine the eigenvalues of the adjacency matrix of $B^2$ using characters, as described in \Cref{Sec:Eigval Cayley}.
This in turn reveals the eigenvalues of the adjacency matrix of $B(C,i,\alpha)$.

\begin{prop}
 \label{Prop:Eigval B}
 Suppose that $C$ is a linear $[n,k]_q$ code with projective system $\ms$.
 Suppose that $Q \in \ms$ corresponds to coordinate $i$ of $C$.
 Let $\mm$ denote the set of hyperplanes that don't intersect $\ms$.
 Then the spectrum of the adjacency matrix of $B(C,i,0)$ can be constructed as follows:
 \begin{itemize}
  \item Add the eigenvalues $\pm \sqrt{q-1} |\mm|$ with multiplicity 1,
  \item for each line $\ell$ through $Q$, add the eigenvalues
  \[
   \pm \left(q \sum_{P \in \ell \setminus \set Q} \left(|\sett{\Pi \in \mm}{P \in \mm}| - \frac{|\mm|}q \right)^2 \right)^{1/2}
  \]
  with multiplicity $q-1$,
  \item add eigenvalue $0$ with multiplicity $(q-2) q^{k-1}$.
 \end{itemize}
\end{prop}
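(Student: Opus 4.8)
The plan is to compute the spectrum of the adjacency matrix $A_B$ of $B := B(C,i,0)$ through its biadjacency matrix $N \in \RR^{L\times R}$. Since $B$ is bipartite, the non‑zero eigenvalues of $A_B$ are exactly the numbers $\pm\sqrt\theta$ as $\theta$ ranges over the positive eigenvalues of $NN^\top$, each sign inheriting the multiplicity of $\theta$; and the multiplicity of $0$ in $A_B$ equals $|R|-|L| = (q-2)q^{k-1}$ plus twice the multiplicity of $0$ as an eigenvalue of $NN^\top$. So it suffices to compute the eigenvalues of $NN^\top$, which is the adjacency matrix of the distance‑two graph $B^2$ restricted to the part $L$, as noted above.

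By \Cref{Lm:Biregular}, $L = \sett{c\in C}{c(i) = 0}$ acts on itself by translation preserving $B^2$, so $B^2|_L$ is an abelian Cayley multigraph on $L \cong \fq^{k-1}$. To make its connection multiset explicit, pick a generator matrix $G$ of $C$ whose $i$‑th column is $e_1$ (multiplying $G$ on the left by a suitable element of $\GL(k,q)$ leaves $C$ unchanged and only replaces the projective system by a $\PGL$‑equivalent one, so the quantities in the statement are unaffected). Under the isomorphism $f\colon\fq^k\to C,\ v\mapsto v^\top G$, the part $L$ corresponds to $V = \sett{v\in\fq^k}{v_1 = 0} = e_1^\perp$, and $Q = \vspan{e_1}$. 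Recalling that $f(w_1)$ and $f(w_2)$ fail to intersect precisely when $(w_1-w_2)^\perp$ contains no point of $\ms$, i.e.\ $(w_1-w_2)^\perp\in\mm$, one obtains $B^2|_L \cong \Cay(V,\mu)$ with
\[
 \mu(u) = \big|\sett{w\in\fq^k}{w_1 \neq 0,\ w^\perp\in\mm,\ (w-u)^\perp\in\mm}\big| \qquad(u\in V).
\]
A direct check shows $\mu(\alpha u) = \mu(u)$ for $\alpha\in\fq^*$, so (\ref{Eq:Characters 2}) applies to $\Cay(V,\mu)$ (with $k$ there replaced by $k-1$, using the standard form on $V$, which is non‑degenerate because $e_1$ is anisotropic).

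Evaluating (\ref{Eq:Characters 2}): the trivial character yields the eigenvalue $\sum_{u\in V}\mu(u)$, and counting, for each of the $(q-1)|\mm|$ vectors $w$ with $w^\perp\in\mm$, the $|\mm|$ vectors $u\in V$ with $(w-u)^\perp\in\mm$, this equals $(q-1)|\mm|^2$ (multiplicity $1$). For a non‑trivial character $\chi_v$, $v\in V$, (\ref{Eq:Characters 2}) expresses the eigenvalue through $\mu(\zero) = (q-1)|\mm|$ and the sums of $\mu$ over the points of $\pg(V)$ and over the points of the hyperplane $\vspan{e_1,v}^\perp$ of $\pg(V)$. Now $\ell := \vspan{e_1,v}$ is a line through $Q$, and since every hyperplane of $\mm$ avoids $Q$ it meets $\ell$ in exactly one point. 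Grouping $\mm$ by which of the $q$ points $P_0,\dots,P_{q-1}$ of $\ell\setminus\set Q$ a hyperplane passes through and putting $m_t = |\sett{\Pi\in\mm}{P_t\in\Pi}|$, a double count over the cosets of $\vspan{e_1,v}^\perp$ gives $\sum_{w\in\vspan{e_1,v}^\perp}\mu(w) = (q-1)\sum_{t}m_t^2$. Substituting into (\ref{Eq:Characters 2}) and simplifying using $\sum_t m_t = |\mm|$, the eigenvalue collapses to
\[
 q\sum_{t\in\fq}\Big(m_t - \tfrac{|\mm|}{q}\Big)^2 = q\sum_{P\in\ell\setminus\set Q}\Big(|\sett{\Pi\in\mm}{P\in\Pi}| - \tfrac{|\mm|}q\Big)^2.
\]
Finally, the non‑trivial characters of $V$ split into classes of size $q-1$ indexed by the points of $\pg(V)$, and $\vspan v \mapsto \vspan{e_1,v}$ matches these classes bijectively with the lines through $Q$, each class sharing the eigenvalue above. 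Collecting the trivial character together with these classes and feeding the result through the reduction of the first paragraph reproduces exactly the stated recipe for the spectrum of $A_B$ (lines $\ell$ with vanishing eigenvalue simply contribute extra zeros, consistently with the stated multiplicities).

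I expect the main obstacle to be the identity $\sum_{w\in\vspan{e_1,v}^\perp}\mu(w) = (q-1)\sum_t m_t^2$: it rests on the incidence fact that a hyperplane avoiding $Q$ meets every line through $Q$ in a single point, combined with a careful double count over cosets of the codimension‑two subspace $\vspan{e_1,v}^\perp$, and it must be kept in step with the character‑to‑point bookkeeping so that each line through $Q$ ends up with multiplicity exactly $q-1$. The remaining simplification and the passage from $NN^\top$ back to $A_B$ are routine.
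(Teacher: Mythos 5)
Your proposal is correct and follows essentially the same route as the paper: reduce to the spectrum of $MM^\top$ on the part $L$, recognise it as an $\FF_q^*$-invariant Cayley multigraph on $L\cong\FF_q^{k-1}$, evaluate the character sums via (\ref{Eq:Characters 2}), and interpret the key sum geometrically as counting pairs of hyperplanes of $\mm$ meeting the line $\ell=\vspan{Q,v}$ in the same point, which yields $q\sum_{P\in\ell\setminus\set Q}\bigl(|\sett{\Pi\in\mm}{P\in\Pi}|-\frac{|\mm|}q\bigr)^2$ with multiplicity $q-1$ per line. Your coset double count (normalising first coordinates) is just a slightly different bookkeeping of the paper's pair count $E_v$, and your handling of the trivial character and of the passage from $MM^\top$ back to $A_B$ matches the paper's.
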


\begin{proof}
Let $B$ denote the graph $B(C,i,0)$.
Following \Cref{Df:Bip graph}, define $L = \sett{c \in C}{c(i) = 0}$ and $R = C \setminus L$.
Let $M$ be the submatrix of $A_B$ with rows labelled by $L$ and columns by $R$.
Then
\begin{align*}
 A_B = \begin{pmatrix}
  O & M \\ M^\top & O
 \end{pmatrix}, &&
 A_B^2 = \begin{pmatrix}
  M M^\top & O \\ O& MM^\top
 \end{pmatrix}.
\end{align*}
The eigenvalues of $A_B$ are (plus or minus) the square roots of the eigenvalues of $A_B^2$.
It is well known that since $B$ is bipartite, for any eigenvalue $\lambda$ of $A_B$, $-\lambda$ is an eigenvalue with the same multiplicity.
Moreover, $M M^\top$ and $M^\top M$ have the same non-zero eigenvalues with the same multiplicity.
These facts together imply that for any non-zero number $\lambda$, $\lambda$ is an eigenvalue of $A_B$ with multiplicity $m$ if and only if $\lambda^2$ is an eigenvalue of $M M^\top$ with multiplicity $m$.
Thus, it suffices to determine the spectrum of $M M^\top$.

For any sequence of vertices $c_1, \dots, c_k$ of $B$, let $B(c_1,\dots,c_k)$ denote the set of common neighbours of $c_1, \dots, c_k$ in $B$.
Then $M M^\top(c_1,c_2) = |B(c_1,c_2)|$.
Note that if $c_3 \in R$, then $c_3 \in B(c_1,c_2)$ if and only if $\wt(c_3-c_1) = \wt(c_3-c_2) = \wt((c_3-c_1) - (c_2 - c_1)) = n$, which is equivalent to $c_3 - c_1 \in B(\zero, c_2-c_1)$.
In other words, $M(c_1,c_2) = M(\zero,c_2-c_1)$.
Therefore, we can see $M M^\top$ as the adjacency matrix of a Cayley multigraph $\Gamma$ on $(L,+)$, where the underlying multiset has multiplicity function $\mu(c) = M M^\top(\zero,c) = |B(\zero,c)|$.
Moreover, if $c_2 \in R$, then $c_2 \in B(\zero,c_1)$ if and only if $\alpha c_2 \in B(\zero,\alpha c_1)$ for any non-zero scalar $\alpha$.
Hence, $\mu(\alpha c) = \mu(c)$ for any non-zero scalar $\alpha$ and codeword $c \in L$.
This means that we can use (\ref{Eq:Characters 2}) to determine the eigenvalues of $M M^\top$.

The graph $B$ is $(d_L, d_R)$-biregular with $d_L = (q-1) |\mm|$ and $d_R = |\mm|$.
Indeed, the fact that $B$ is biregular follows from \Cref{Lm:Biregular} (\ref{Lm:Biregular:Sharply}).
The degree of $\zero$ in $B$ equals the number of codewords of $C$ of weight $n$.
Every codeword of weight $n$ corresponds to a hyperplane of $\pg(k-1,q)$ that doesn't intersect $\ms$, and vice versa, every such hyperplanes corresponds to $q-1$ codewords of weight $n$.
This proves that $d_L = (q-1)|\mm|$.
Double counting the edges of $B$ tells us that $|L| d_L = |R| d_R$.
Since $|L| = q^{k-1}$ and $|R| = q^k - q^{k-1}$, $d_R = \frac{|L|}{|R|} d_L = \frac{1}{q-1} (q-1)|\mm| = |\mm|$.
It follows that the trivial character of $(L,+)$, which yields the eigenvector $\one$, has eigenvalue $\sum_{c \in L} |B(\zero,c)|$.
This is easily seen to equal the number of walks of length 2 starting in $\zero$.
Since every neighbour of $\zero$ has degree $d_R$, this equals $d_L d_R = (q-1) |\mm|^2$.

Now we investigate the non-trivial characters of $(L,+)$.
We make some assumptions on $C$.
We may suppose that $i=1$ (if necessary, permute the coordinates of $C$, which yields an equivalent code).
Since we assumed that $C$ is projective, no generator matrix of $C$ has a zero column.
Now take a basis $c_2, \dots, c_k$ of $L$, choose $c_1 \in R$, and consider the generator matrix $G$ with rows $c_1, \dots, c_k$.
We may suppose that this is the generator matrix used to construct $\ms$ (switching between generator matrices of $C$ corresponds to applying projectivities to the corresponding projective system).
Note that this means that $Q = (1:0:\ldots:0)$.

Consider the map $f: \FF_q^k \to C: v \mapsto v^\top G$.
This is an isomorphism of vector spaces.
Define $V = \sett{v \in \FF_q^k}{v(1) = 0} = Q^\perp$.
Then the restriction of $f$ to $V$ is a vector space isomorphism (and hence a group isomorphism) from $V$ to $L$.
Make a set $V' \subset V \setminus \set \zero$ that contains a unique scalar multiple of each vector in $V$.
It follows from \Cref{Sec:Eigval Cayley} and (\ref{Eq:Characters 2}) that the characters of $L$ are given by $\chi_v$ with $v \in V$ defined by $\chi_v(w^\top G) = \zeta^{T(v^\top w)}$, with $\zeta$ and $T$ as in \Cref{Sec:Eigval Cayley}.
The eigenvalue of $\chi_v$ equals
\begin{equation}
 \label{Eq:Eigval B:1}
 \chi_v(\Gamma) = |B(\zero)| + q \sum_{w \in V' \cap v^\perp} |B(\zero, w^\top G)| - \sum_{w \in V'} |B(\zero,w^\top G)|.
\end{equation}
We know that $|B(\zero)| = d_L = (q-1)d_R$.
Moreover, since $|B(\zero,\alpha c)| = |B(\zero,c)|$ for all non-zero scalars $\alpha$,
\begin{align*}
 \sum_{w \in V'} |B(\zero,w^\top G)|
 = \sum_{w \in V'} \frac{1}{q-1}  \sum_{\alpha \in \FF_q^*} |B(\zero,\alpha w)|
 = \frac 1{q-1} \sum_{c \in L \setminus \set \zero} |B(\zero,c)|.
\end{align*}
The latter sum equals the number of walks of length 2 that start in $\zero$ but do not end in $\zero$.
Hence, the sum equals $d_L(d_R-1) = (q-1) d_R (d_R-1)$.
Therefore,
\begin{equation}
 \label{Eq:Eigval B:2}
 |B(\zero)| - \sum_{w \in V'} |B(\zero,w^\top G)| = (q-1) d_R - \frac{1}{q-1} (q-1) d_R (d_R-1) = - d_R (d_R-q).
\end{equation}

Next, we examine the sum $\sum_{w \in V' \cap v^\perp} |B(\zero, w^\top G)|$.
We can rewrite it as
\begin{align}
 \label{Eq:Eigval B:3}
 \begin{split}
 \sum_{w \in V' \cap v^\perp} &|B(\zero, w^\top G)|
 = |\sett{(w,u) \in (V' \cap v^\perp) \times \FF_q^k}{u^\top G \in B(\zero,w^\top G)}| \\
 &= |\sett{(w,u) \in (V' \cap v^\perp) \times \FF_q^k}{\wt(u^\top G) = \wt((u-w)^\top G) = n}| \\
 & = |\sett{(w,u_1,u_2) \in (V' \cap v^\perp) \times \FF_q^k \times \FF_q^k}{\wt(u_1^\top G) = \wt(u_2^\top G) = n, \, u_2 = u_1 - w}| \\
 &= |\underbrace{\sett{(u_1,u_2) \in \FF_q^k \times \FF_q^k}{\wt(u_1^\top G) = \wt(u_2^\top G) = n, \, u_1 - u_2 \in V' \cap v^\perp}}_{=E_v}|
 \end{split}
\end{align}
Note that $\wt(u_1^\top G) = n$ if and only if $u_1^\perp$ is a hyperplane in $\mm$.
Suppose now that we take two hyperplanes $\Pi_1, \Pi_2 \in \mm$.
How many pairs $(u_1,u_2) \in E_v$ are there with $\Pi_1 = u_1^\perp$ and $\Pi_2 = u_2^\perp$?
First note that $\Pi_1$ and $\Pi_2$ should be distinct.
Otherwise $u_1 - u_2$ is a multiple of $u_1$, but no non-zero multiple of $u_1$ is in $V$, so no multiple of $u_1$ is in $V'$.
Now suppose that $\Pi_1$ and $\Pi_2$ are distinct, hence $u_1$ and $u_2$ are linearly independent.
Then $\vspan{u_1,u_2} \cap V = \vspan{u_1 - \frac{u_1(1)}{u_2(1)} u_2}$.
There is a unique scalar multiple $u_3$ of the vector $u_1 - \frac{u_1(1)}{u_2(1)} u_2$ in $V'$.
Then $u_3$ is in $v^\perp$ if and only if the line $\vspan{u_1,u_2}$ intersects the hyperplanes $V$ and $v^\perp$ in the same projective point (which must be $\vspan{u_3}$), or $\vspan{u_1,u_2}$ is contained in $v^\perp$.
We claim that this is equivalent to $\Pi_1 \cap \ell = \Pi_2 \cap \ell$, with $\ell = \vspan{Q,v}$:
First consider the case where $\vspan{u_1,u_2}$ is not contained in $v^\perp$, and $\vspan{u_1,u_2} \cap V = \vspan{u_1,u_2} \cap v^\perp$.
This is equivalent to $\vspan{u_1^\perp \cap u_2^\perp, V^\perp} = \vspan{u_1^\perp \cap u_2^\perp, v}$.
Note that this means that $\vspan{\Pi_1 \cap \Pi_2, Q} = \vspan{\Pi_1 \cap \Pi_2, v}$.
Since $\Pi_1 \cap \Pi_2$ has codimension 2, and $Q$ and $\vspan v$ are points outside $\Pi_1 \cap \Pi_2$, this happens if and only if the line $\ell = \vspan{Q,v}$ intersects $\Pi_1 \cap \Pi_2$, or equivalently, $\Pi_1 \cap \ell = \Pi_2 \cap \ell$.
The other case to consider is when $u_1, u_2 \in v^\perp$.
Then $v \in \Pi_1$ and $v \in \Pi_2$.
Since $Q \notin \Pi_1$ and $Q \notin \Pi_2$, this means that $\Pi_1$ and $\Pi_2$ intersect $\ell = \vspan{Q,v}$ in the same point $\vspan v$.
We conclude that the number of pairs $(u_1,u_2) \in E_v$ with $\Pi_1 = u_1^\perp$ and $\Pi_2 = u_2^\perp$ equals 1 if $\Pi_1$ and $\Pi_2$ are distinct hyperplanes intersecting $\ell$ in the same point, and zero otherwise.
We plug this into (\ref{Eq:Eigval B:3}) to find
\begin{align}
 \label{Eq:Eigval B:4}
 \begin{split}
 \sum_{w \in V' \cap v^\perp} |B(\zero,w^\top G)|
 &= |\sett{(\Pi_1,\Pi_2) \in \mm \times \mm}{\Pi_1 \neq \Pi_2, \, \Pi_1 \cap \ell = \Pi_2 \cap \ell} \\
 &= \sum_{P \in \ell \setminus \set Q} |\sett{\Pi \in \mm}{P \in \Pi}|(|\sett{\Pi \in \mm}{P \in \Pi}|-1)\\
 & = \sum_{P \in \ell \setminus \set Q} |\sett{\Pi \in \mm}{P \in \Pi}|^2 - \underbrace{\sum_{P \in \ell \setminus \set Q} |\sett{\Pi \in \mm}{P \in \Pi}|}_{=|\mm| = d_R}.
 \end{split}
\end{align}
Now we plug (\ref{Eq:Eigval B:2}) and (\ref{Eq:Eigval B:4}) into (\ref{Eq:Eigval B:1}) to find that
\begin{align*}
 \chi_v(\Gamma) 
 &= q\left( \left(\sum_{P \in \ell \setminus \set Q} |\sett{\Pi \in \mm}{P \in \Pi}|^2 \right) - d_R \right) - d_R(d_R-q) \\
 &= q \left(\sum_{P \in \ell \setminus \set Q} |\sett{\Pi \in \mm}{P \in \Pi}|^2 \right) - d_R^2
\end{align*}
Using $|\ell \setminus \set Q| = q$ and $\sum_{P \in \ell \setminus \set Q} |\sett{\Pi \in \mm}{P \in \Pi}| = |\mm| = d_R$, we can rewrite this as
\[
 \chi_v(\Gamma) = q \sum_{P \in \ell \setminus \set Q} \left( |\sett{\Pi \in \mm}{P \in \Pi}| - \frac{|\mm|}q \right)^2.
\]
Note that for every line $\ell$ through $Q$, there are $q-1$ vectors $v \in V$ with $\ell = \vspan{P,v}$.
This shows that for every line $\ell$ through $Q$, we need to add the above eigenvalue to the spectrum of $M M^\top$ with multiplicity $q-1$.
We can then reconstruct the spectrum of $A_B$ from the spectrum of $M M^\top$ as described in the beginning of the proof, which finishes the argument.
\end{proof}

\Cref{Thm:Few or many} now follows directly from an application of the bipartite expander mixing lemma to the graph $B(C,i,\alpha)$.

\begin{proof}[Proof of \Cref{Thm:Few or many}]
Consider the graph $B = B(C,i,\alpha)$.
Define $L = \mf_{i,\alpha}$ and $R = C \setminus L$.
We know that $B$ is $(d_L,d_R)$-regular with $d_L = (q-1) d_R = (q-1)|\mm|$.
Note that $\frac {d_R}{|L|} = \frac{|\mm|}{q^{k-1}}$.
We know from \Cref{Lm:Biregular} (\ref{Lm:Biregular:Isomorphic}) that $B$ is isomorphic to $B(C,i,0)$, hence we can use \Cref{Prop:Eigval B} to deduce that the second largest eigenvalue $\lambda_2$ of $A_B$ equals $\sqrt \lambda$, with $\lambda$ as defined in the theorem statement.
Define $S = \mf \cap L$, and $T = \mf \cap R$.
Since $\mf$ is a coclique in $B$, the bipartite expander mixing lemma (\Cref{Res:Bip EML}) tells us that
\begin{equation}
 \label{Eq:Few or many:1}
 \left( 0 - \frac{|\mm|}{q^{k-1}} |S| |T| \right)^2 \leq \lambda_2^2 |S| \frac{|L|-|S|}{|L|} |T| \frac{|R|-|T|}{|R|} \leq \lambda |S| |T|.
\end{equation}
This is easily seen to imply the inequality from the theorem statement.
\end{proof}

\begin{rmk}
 \label{Rmk:HM:1}
 We can rephrase intersecting families of Hilton-Milner type to the language of the graph $B(C,i,\alpha)$.
 Such a family consists of a vertex $c \in R$, together with the vertices in $L$ that are not adjacent with $c$.
 Therefore, the size of a Hilton-Milner type family equals $|L| - d_R + 1 = q^{k-1} - |\mm| + 1$.
\end{rmk}

This simple observation tells us how large the intersection of an intersecting family $\mf$ and a star $\mf_{i,\alpha}$ needs to be to guarantee that $\mf$ is contained in $\mf_{i,\alpha}$.

\begin{lm}
 \label{Lm:Many is all}
 Suppose that $C$ is a linear $[n,k]_q$ code with projective system $\ms$.
 Let $\mm$ denote the set of hyperplanes that don't intersect $\mm$.
 If $\mf$ is an intersecting family, and $\mf_{i,\alpha}$ is a star, then either $\mf \subseteq \mf_{i,\alpha}$, or $|\mf \cap \mf_{i,\alpha}| \leq q^{k-1} - |\mm|$.
\end{lm}

\begin{proof}
 Consider the graph $B(C,i,\alpha)$.
 As reasoned above, $\mf$ is a coclique in this graph.
 Suppose that $\mf$ contains some codeword $c \notin \mf_{i,\alpha}$.
 We know from the proof of \Cref{Prop:Eigval B} that $c$ has degree $d_R = |\mm|$ in $B(C,i,\alpha)$.
 Therefore, $\mf$ contains at most $|\mf_{i,\alpha}| - d_R = q^{k-1} - |\mm|$ elements of $\mf_{i,\alpha}$.
\end{proof}

\subsection{Intersecting some star in more than few codewords}
 \label{Sec:More than few}

In this subsection, we explain, given a large intersecting family $\mf$, how to find a star $\mf_{i,\alpha}$ that has a sizeable intersection with $\mf$.

\begin{thm} 
 \label{Thm:more than few}
 Let $C$ be a linear $[n,k]_q$ code with projective system $\ms$.
 Suppose that through every point $P \notin \ms$ there are at least $t > 0$ hyperplanes that don't intersect $\ms$.
 If $\mf \subset C$ is an intersecting family, then there exists a star $\mf_{i,\alpha} = \sett{c \in C}{c(i) = \alpha}$ such that
 \[
  |\mf \cap \mf_{i,\alpha}| \geq \frac{|\mf|}q + \frac{q^{k-1}}n\left( \frac{|\mf|}{q^k} \left( \frac{|\mm|}t - 1 \right) + 1 - \frac{|\mm|}{qt} \right)
 \]
\end{thm}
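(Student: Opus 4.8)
The plan is to use the spectral approach sketched in Step (\ref{Step 2}) of \Cref{Sec:Overview}: bound the second smallest eigenvalue of $\Gamma_0(C) = \Gamma_{\{0\}}(C)$, project $\chi_\mf$ onto the span of the star-vectors, and then average. First I would invoke \Cref{Prop:Eigval Gamma_T} with $T = \{0\}$ to get an explicit orthogonal eigenbasis of $A_{\Gamma_0(C)}$: the vector $\one$ with eigenvalue $(q-1)|\mm|$, and the vectors $\xi_{v,\alpha} - \frac1q\one$ with eigenvalue $q\,|\{\Pi \in \mm : \vspan v \in \Pi\}| - |\mm|$. The smallest eigenvalue is $-|\mm|$, attained exactly on the $\vspan v \in \ms$ (the points lying on no hyperplane of $\mm$); crucially, for $P \notin \ms$ the hypothesis that at least $t > 0$ hyperplanes of $\mm$ pass through $P$ forces the corresponding eigenvalue to be at least $qt - |\mm|$. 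So the \emph{second} smallest eigenvalue of $\Gamma_0(C)$ is $\geq qt - |\mm|$, and the eigenspace of the minimum eigenvalue $-|\mm|$ is exactly $U := \vspan{\xi_{g_i,\alpha} - \frac1q\one : i \in \{1,\dots,n\},\ \alpha \in \FF_q}$, the span of the (centered) star characteristic vectors, together with $\one$ if one prefers to work in $\RR^C$ directly.

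Next I would decompose $\chi_\mf = \frac{|\mf|}{q^k}\one + u + w$, where $u \in U \cap \one^\perp$ lies in the $-|\mm|$-eigenspace restricted to $\one^\perp$, and $w$ is orthogonal to both $\one$ and $U$, hence a combination of eigenvectors with eigenvalue $\geq qt - |\mm|$. Since $\mf$ is a coclique, $\chi_\mf^\top A_{\Gamma_0(C)} \chi_\mf = 0$; expanding via the decomposition gives
\[
 0 = \frac{|\mf|^2}{q^{2k}}(q-1)|\mm|\cdot q^k \;-\; |\mm|\,\|u\|^2 \;+\; \sum (\text{eigenvalue} \geq qt-|\mm|)\,(\text{coeff})^2,
\]
and bounding the last sum below by $(qt - |\mm|)\|w\|^2$ and using $\|u\|^2 + \|w\|^2 = \|\chi_\mf\|^2 - \frac{|\mf|^2}{q^k} = |\mf| - \frac{|\mf|^2}{q^k}$ yields a lower bound on $\|u\|^2$, i.e.\ on the squared norm of the projection of $\chi_\mf$ onto the star-span. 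Concretely one solves a linear inequality in $\|u\|^2$; the arithmetic is routine once the two eigenvalue endpoints $(q-1)|\mm|$ and $qt - |\mm|$ are in place.

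Finally I would turn the norm bound into a pointwise statement by averaging over the stars. The projection of $\chi_\mf$ onto $U \cap \one^\perp$ can be written using the $\xi_{g_i,\alpha}$, and $\langle \chi_\mf, \xi_{g_i,\alpha}\rangle = |\mf \cap \mf_{i,\alpha}|$. Because each codeword of $C$ lies in exactly one star $\mf_{i,\alpha}$ for each coordinate $i$, we have the normalization $\sum_{\alpha} |\mf \cap \mf_{i,\alpha}| = |\mf|$ for every $i$, and $\sum_{\alpha}\|\xi_{g_i,\alpha} - \frac1q\one\|^2 = q^{k-1}(q-1)/q \cdot$ (a constant). Expressing $\|u\|^2$ as a sum over the $n$ coordinates of $\sum_\alpha (|\mf \cap \mf_{i,\alpha}| - |\mf|/q)^2 / q^{k-1}$ (this uses that the $\xi_{g_i,\alpha} - \frac1q\one$ for fixed $i$ are ``balanced'' with known Gram matrix $q^{k-1}(\delta_{\alpha\beta} - 1/q)$), one gets $\|u\|^2 = \frac{1}{q^{k-1}}\sum_{i,\alpha}\big(|\mf\cap\mf_{i,\alpha}| - \tfrac{|\mf|}q\big)^2$. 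Since there are $nq$ pairs $(i,\alpha)$, some pair realizes a value at least the average, and combined with $\sum_\alpha(|\mf \cap \mf_{i,\alpha}| - |\mf|/q) = 0$ one extracts a pair with $|\mf \cap \mf_{i,\alpha}| \geq \frac{|\mf|}q + \frac{\|u\|^2 q^{k-1}}{nq}\cdot(\text{something})$; substituting the lower bound for $\|u\|^2$ and simplifying with $|\mm| - qt$, $|\mf|$, $q^k$ should reproduce exactly the stated inequality.

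The main obstacle I anticipate is \textbf{the bookkeeping in the averaging step}: one must correctly compute the Gram matrix of the $\xi_{g_i,\alpha}$ within a fixed coordinate $i$ and verify that contributions from different coordinates $i$ add up without overcounting — i.e.\ that the family $\{\xi_{g_i,\alpha} - \frac1q\one\}_{i,\alpha}$, while spanning $U \cap \one^\perp$, has a predictable (block-diagonal-like) overlap structure letting $\|u\|^2$ be read off coordinate by coordinate. Getting the precise constants so that the final bound comes out as written, rather than off by a factor involving $q/(q-1)$ or $n/(n-1)$, will require care; the spectral input itself is the easy part.
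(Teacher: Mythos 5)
Your spectral setup is exactly the paper's: the eigenbasis of $\Gamma_0(C)$ from \Cref{Prop:Eigval Gamma_T}, the identification of the $-|\mm|$-eigenspace with the span of the centered star vectors (which uses the hypothesis $t>0$), the second smallest eigenvalue $qt-|\mm|$, the decomposition $\chi_\mf = \frac{|\mf|}{q^k}\one + u + w$, and the quadratic-form argument lowering-bounding $\|u\|^2$ all match. Your identity $\|u\|^2 = q^{1-k}\sum_{i,\alpha}\big(|\mf\cap\mf_{i,\alpha}| - \frac{|\mf|}{q}\big)^2$ is also correct (the paper obtains the equivalent statement by computing $\chi_\mf^\top MM^\top\chi_\mf$ for the matrix $M$ whose columns are the star characteristic vectors; the cross-coordinate blocks are orthogonal because $C$ is projective, so your worry about overlap structure is unfounded).

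The genuine gap is in the final extraction step. From $\sum_{i,\alpha} d_{i,\alpha}^2 = q^{k-1}\|u\|^2$ with $d_{i,\alpha} = |\mf\cap\mf_{i,\alpha}| - |\mf|/q$, taking the pair achieving at least the average of the \emph{squares} over the $nq$ pairs only gives $|d_{i,\alpha}| \geq \sqrt{q^{k-2}\|u\|^2/n}$, which is a square-root bound of the wrong shape and moreover could be realised by a negative deviation; the observation $\sum_\alpha d_{i,\alpha}=0$ only recovers a positive deviation at the cost of a factor like $q-1$, and neither route produces the linear-in-$\|u\|^2$ term $\frac{q^{k-1}}{n|\mf|}\|u\|^2$ that the theorem requires. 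The fix — and what the paper actually does — is to average the first power, not the square, over a different index set: write
\[
 \sum_{i,\alpha}|\mf\cap\mf_{i,\alpha}|^2 \;=\; \sum_{c\in\mf}\sum_{i=1}^n |\mf\cap\mf_{i,c(i)}|,
\]
i.e.\ count triples $(i,c_1,c_2)\in\{1,\dots,n\}\times\mf\times\mf$ with $c_1(i)=c_2(i)$. Averaging first over $c_1\in\mf$ and then over $i$ yields a star with $|\mf\cap\mf_{i,\alpha}| \geq \frac{1}{n|\mf|}\sum_{i,\alpha}|\mf\cap\mf_{i,\alpha}|^2 = \frac{|\mf|}{q} + \frac{q^{k-1}}{n|\mf|}\|u\|^2$, and substituting the lower bound $\|u\|^2 \geq |\mf|\big(\frac{|\mf|}{q^k}(\frac{|\mm|}{t}-1) + 1 - \frac{|\mm|}{qt}\big)$ cancels the $|\mf|$ and gives exactly the stated inequality, with no square roots and no sign issue. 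Without this reformulation your last step does not close.
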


\begin{proof}
 Let $\Gamma_0(C) = \Cay(C,\sett{c\in C}{\wt(c)=n})$ denote the graph on $C$ where codewords are adjacent if and only if they don't intersect.
 Let $A = A_{\Gamma_0(C)}$ denote its adjacency matrix.
 Let $\mm$ denote the set of hyperplanes that don't intersect $\ms$.
 We know from \Cref{Prop:Eigval Gamma_T} that the smallest eigenvalue of $A$ is $-|\mm|$, and the corresponding eigenspace $E_{-|\mm|}$ is spanned by the vectors $\xi_{v,\alpha} - \frac 1q \one$ with $\vspan v$ the points of $\ms$.
 Note that this means that $\xi_{v,\alpha}$ are the characteristic vectors of the stars.
 The second smallest eigenvalue (not counted with multiplicity) must be $\lambda = q t - |\mm|$.

 Consider the characteristic vector $\chi_\mf$ of $\mf$.
 We decompose it according to the eigenspaces of $A$.
 This yields that
 \[
  \chi_\mf = \frac{|\mf|}{q^k} \one + u_1 + u_2,
 \]
 with $u_1 \in E_{-|\mm|}$ and $u_2 \in \vspan{\one, E_{-|\mm|}}^\perp$.
 Note that $\one$ is an eigenvector of $A$ with as eigenvalue the degree of $\Gamma_0(C)$, which equals $(q-1)|\mm|$.
 We have
 \[
  |\mf| = \chi_\mf^\top \chi_\mf = \frac{|\mf|^2}{q^{2k}} \| \one \|^2 + \| u_1 \|^2 + \| u_2 \|^2
  = \frac{|\mf|^2}{q^{k}} + \| u_1 \|^2 + \| u_2 \|^2
 \]
 We also know that
 \begin{align*}
  0 &= \chi_\mf^\top A \chi_\mf = \left( \frac{|\mf|}{q^k} \one + u_1 + u_2 \right)^\top A \left( \frac{|\mf|}{q^k} \one + u_1 + u_2 \right) \\
  &= \frac{|\mf|^2}{q^{2k}} (q-1)|\mm| \| \one \|^2 - |\mm| \| u_1 \|^2 + u_2^\top A u_2.
 \end{align*}
 In addition,
 \[
  u_2^\top A u_2 \geq \lambda \| u_2 \|^2
  = (qt - |\mm|)\left( |\mf| - \frac{|\mf|^2}{q^k} - \| u_1 \|^2 \right).
 \]
 Hence,
 \begin{align*}
  0 &\geq \frac{|\mf|^2}{q^{k}} (q-1)|\mm| - |\mm| \| u_1 \|^2 + (qt - |\mm|)\left( |\mf| - \frac{|\mf|^2}{q^k} - \| u_1 \|^2 \right) \\
  &= |\mf| \left( \frac{|\mf|}{q^{k-1}}(|\mm|-t) + qt-|\mm| \right) - q t \| u_1 \|^2,
 \end{align*}
 which implies that
 \[
  \| u_1 \|^2 \geq |\mf| \left( \frac{|\mf|}{q^k} \left( \frac{|\mm|}t - 1 \right) + 1 - \frac{|\mm|}{qt} \right).
 \]

 Now make a matrix $M$ whose rows are labelled by $C$, whose columns are labelled by $\{1,\dots,n\} \times \FF_q$, and where $M(c,(i,\alpha))$ equals 1 if $c(i) = \alpha$ and 0 otherwise.
 In other words, the columns of $M$ are the characteristic vectors of the stars.
 We determine the eigenvalues and eigenspace of $M M^\top$.
 First, note that
 \[
  M M^\top \one = M (q^{k-1} \one) = n q^{k-1} \one.
 \]
 Next, consider the generator matrix $G$ of $C$ corresponding to $\ms$, and let $v$ be the $i$\th column of $G$, so that $\xi_{v,\alpha}$ is the characteristic vector of $\mf_{i,\alpha}$.
 Then
 \[
  M^\top \xi_{v,\alpha}(j,\beta) = \begin{cases}
  q^{k-1} & \text{if } (i,\alpha) = (j,\beta), \\
  0 & \text{if } i = j, \, \alpha \neq \beta, \\
  q^{k-2} & \text{if } i \neq j.
  \end{cases}
 \]
 Therefore,
 \[
  M M^\top \xi_{v,\alpha} (c) = \sum_{j=1}^n M^\top \xi_{v,\alpha} (j,c(j))
  = (n-1) q^{k-2} + \begin{cases}
   q^{k-1} & \text{if } c(i) = \alpha, \\
   0 & \text{otherwise}.
  \end{cases}
 \]
 In other words, $M M^\top \xi_{v, \alpha} = q^{k-1} \xi_{v,\alpha} + (n-1) q^{k-2} \one$.
 It follows that
 \[
  M M^\top \left(\xi_{v,\alpha} - \frac 1q \one \right) = q^{k-1} \xi_{v,\alpha} + (n-1) q^{k-2} \one - \frac 1q n q^{k-1} \one = q^{k-1} \left(\xi_{v,\alpha} - \frac 1q \one \right).
 \]
 Since the vectors $\xi_{v,\alpha} - \frac 1q \one$ span $E_{-|\mm|}$, every vector in $E_{-|\mm|}$ is an eigenvector of $M M^\top$ with eigenvalue $q^{k-1}$, and this applies in particular to $u_1$.
 Lastly, since the column space of $M$ spans $\vspan{\one, E_{-|\mm|}}$, every vector orthogonal to $\vspan{\one, E_{-|\mm|}}$ is in the kernel of $M M^\top$, and this applies in particular to $u_2$.
 We conclude that
 \begin{align*}
  \chi_{\mf}^\top M M^\top \chi_{\mf}
  &= \left( \frac{|\mf|}{q^k} \one + u_1 + u_2 \right)^\top M M^\top \left( \frac{|\mf|}{q^k} \one + u_1 + u_2 \right) \\
  &= \left( \frac{|\mf|}{q^k} \one + u_1 + u_2 \right)^\top \left( n q^{k-1} \frac{|\mf|}{q^k} \one + q^{k-1} u_1 \right)
  = n q^{k-1} \frac{|\mf|^2}{q^{2k}} \| \one \|^2 + q^{k-1} \| u_1 \|^2 \\
  &\geq n \frac{|\mf|^2}q + q^{k-1} |\mf| \left( \frac{|\mf|}{q^k} \left( \frac{|\mm|}t - 1 \right) + 1 - \frac{|\mm|}{qt} \right)
 \end{align*}
On the other hand, if we denote the star $\sett{c \in C}{c(i) = \alpha}$ by $\mf_{i,\alpha}$, then $\chi_\mf^\top M(i,\alpha) = |\mf \cap \mf_{i,\alpha}|$.
Thus,
\[
 \chi_{\mf}^\top M M^\top \chi_{\mf}
 = \sum_{i=1}^n \sum_{\alpha \in \FF_q} |\mf \cap \mf_{i,\alpha}|^2
 = |\sett{(i,\alpha,c_1,c_2) \in \{1,\dots,n\} \times \FF_q \times C \times C}{c_1(i) = c_2(i) = \alpha}|.
\]
By a simple averaging argument, there must exist a $c_1 \in C$ such that
\begin{align*}
 |\sett{(i,c_2) \in \{1,\dots,n\} \times C}{c_2(i) = c_1(i)}| 
 &\geq \frac{1}{|\mf|} \chi_{\mf}^\top M M^\top \chi_{\mf} \\
 &\geq n \frac{|\mf|}q + q^{k-1}\left( \frac{|\mf|}{q^k} \left( \frac{|\mm|}t - 1 \right) + 1 - \frac{|\mm|}{qt} \right)
\end{align*}
Again by a simple averaging argument, there must exist an $i \in \{1, \dots, n\}$ such that
\begin{align*}
 |\mf \cap \mf_{i,c_1(i)}|
 & \geq \frac 1n |\sett{(i,c_2) \in \{1,\dots,n\} \times C}{c_2(i) = c_1(i)}| \\
 &\geq \frac{|\mf|}q + \frac{q^{k-1}}n\left( \frac{|\mf|}{q^k} \left( \frac{|\mm|}t - 1 \right) + 1 - \frac{|\mm|}{qt} \right).
 \qedhere
\end{align*}
\end{proof}

If $|\mf| = q^{k-1}$, then we can deduce from the EKR module property that $u_2 = \zero$, and the bound of \Cref{Thm:more than few} does not depend on the value of $t$.

\begin{crl}
 \label{Crl:More than few}
 Let $C$ be a linear $[n,k]_q$ code that has the EKR module property.
 If $\mf$ is an intersecting family in $C$ of size $q^{k-1}$, then there exists a star $\mf_{i,\alpha}$ such that
 \[
  |\mf \cap \mf_{i,\alpha}| \geq q^{k-2} \left( 1 + \frac{q-1}n \right).
 \]
\end{crl}

\begin{proof}
 Let $\mm$ be the set of hyperplanes that don't intersect the projective system $\ms$ associated to $C$.
 Since the EKR module property holds, there exists an integer $t > 0$ such that every point $P \notin \ms$ is incident with at least $t$ hyperplanes of $\mm$.
 By \Cref{Thm:more than few}, there exists a star $\mf_{i,\alpha}$ such that
 \begin{align*}
  |\mf \cap \mf_{i,\alpha}| 
  &\geq \frac{|\mf|}q + \frac{q^{k-1}}n\left( \frac{|\mf|}{q^k} \left( \frac{|\mm|}t - 1 \right) + 1 - \frac{|\mm|}{qt} \right) \\
  &= q^{k-2} + \frac{q^{k-1}}n \left( \frac 1q \left( \frac{|\mm|}t - 1 \right) + 1 - \frac{|\mm|}{qt} \right)
  = q^{k-2} + \frac{q^{k-1}}n\left( 1 - \frac 1q \right),
 \end{align*}
 which is equivalent with the corollary statement.
\end{proof}

\subsection{The strict EKR property}
 \label{Sec:Strict sub}

In this section, we will prove \Cref{Thm:Codes} (\ref{Thm:Codes:3}), which is stated again below as \Cref{Thm:Strict EKR}, using the tools developed thus far in this section.
The theorem provides us with a sufficient condition for the strict EKR property to hold.
This condition is particularly intersecting when the length of the code is subquadratic in the field order.

\begin{thm}
 \label{Thm:Strict EKR}
 Let $C$ be a linear $[n,k]_q$ code with projective system $\ms$.
 Suppose that no 3 points of $\ms$ are collinear\footnote{This is equivalent to the dual code $C^\perp$ of $C$ having minimum weight at least 3.}.
 Let $\mm$ denote the set of hyperplanes that don't intersect $\ms$.
 Suppose that for every point $P \notin \ms$,
 \[
  \left| |\sett{\Pi \in \mm}{P \in \Pi}| - \frac{|\mm|}q  \right| < \frac{|\mm|}{q \min \{ \sqrt n, q-1 \}}.
 \]
 Then the only intersecting families in $C$ of size $q^{k-1}$ are the stars.
\end{thm}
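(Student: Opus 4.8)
The plan is to run the three-step strategy of \Cref{Sec:Overview}. Throughout, write $D = \min\{\sqrt n, q-1\}$ and $\mf_{i,\alpha} = \sett{c \in C}{c(i) = \alpha}$ for a star. First observe that the hypothesis is only meaningful when $\mm \neq \emptyset$: if some point lay outside $\ms$ while $\mm$ were empty, the displayed inequality would read $0 < 0$, and for $k \geq 2$ a point set with no three collinear points cannot be all of $\pg(k-1,q)$. Hence $\mm \neq \emptyset$, so $C$ is not an intersecting family and, by \Cref{Prop:Weak EKR}, every maximum intersecting family — as well as every star — has size exactly $q^{k-1}$. Moreover, since $D \geq 1$, the strict inequality forces $|\sett{\Pi \in \mm}{P \in \Pi}| > \frac{|\mm|}{q}\bigl(1 - \frac{1}{D}\bigr) \geq 0$, hence $\geq 1$, for every point $P \notin \ms$; by \Cref{Prop:Module prop}(\ref{Prop:Module prop:1}), $C$ has the EKR module property.

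Now let $\mf$ be an intersecting family with $|\mf| = q^{k-1}$. The first real step is to locate a good star: by \Cref{Crl:More than few} there is a star $\mf_{i,\alpha}$ with $|\mf \cap \mf_{i,\alpha}| \geq q^{k-2}\bigl(1 + \frac{q-1}{n}\bigr)$. Let $Q \in \ms$ be the point corresponding to coordinate $i$. It suffices to show $\mf \subseteq \mf_{i,\alpha}$, since then the equality of sizes gives $\mf = \mf_{i,\alpha}$. So assume for contradiction that $\mf \not\subseteq \mf_{i,\alpha}$, and set $x = |\mf \cap \mf_{i,\alpha}|$, $y = |\mf \setminus \mf_{i,\alpha}| = q^{k-1} - x$, so that $x, y > 0$.

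Next I estimate the quantity $\lambda$ of \Cref{Thm:Few or many}. Fix a line $\ell$ through $Q$. Since no three points of $\ms$ are collinear and $Q \in \ms$, the line $\ell$ meets $\ms$ in $Q$ and at most one further point $P_0$; such a $P_0$ lies on no hyperplane of $\mm$ and so contributes $(|\mm|/q)^2$ to the sum defining $\lambda_\ell$, while each of the remaining $\geq q-1$ points $P$ of $\ell \setminus \set Q$ contributes $\bigl(|\sett{\Pi \in \mm}{P \in \Pi}| - |\mm|/q\bigr)^2 < (|\mm|/(qD))^2$ by hypothesis. Hence $\lambda_\ell < \frac{|\mm|^2}{q}\bigl(1 + \frac{q-1}{D^2}\bigr)$ for every line through $Q$ (the case where $\ell$ misses $\ms\setminus\set Q$ only gives a smaller value), so $\lambda < \frac{|\mm|^2}{q}\bigl(1 + \frac{q-1}{D^2}\bigr) \leq |\mm|^2 \leq (q-1)|\mm|^2$. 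By \Cref{Prop:Eigval B} the largest eigenvalue of $B(C,i,\alpha)$ is $\sqrt{q-1}\,|\mm|$ and the second largest is therefore $\sqrt\lambda$. As $\mf$ is a coclique in $B(C,i,\alpha)$, the bipartite expander mixing lemma (\Cref{Res:Bip EML}), applied with $S = \mf \cap \mf_{i,\alpha}$, $T = \mf \setminus \mf_{i,\alpha}$, $|L| = q^{k-1}$, $|R| = q^{k-1}(q-1)$, $d_R = |\mm|$ — crucially \emph{keeping} the factor $\frac{|R|-|T|}{|R|}$ rather than bounding it by $1$ as in the proof of \Cref{Thm:Few or many} — yields, after dividing by $xy^2/q^{2k-2}$ and substituting $y = q^{k-1} - x$,
\[
 x\bigl(|\mm|^2(q-1) - \lambda\bigr) \;\leq\; \lambda\, q^{k-1}(q-2).
\]
Since $\lambda < |\mm|^2(q-1)$, the left-hand coefficient is positive, and the implied upper bound for $x$ is increasing in $\lambda$; inserting $\lambda < \frac{|\mm|^2}{q}\bigl(1 + \frac{q-1}{D^2}\bigr)$ and simplifying — in the case $n \leq (q-1)^2$ one has $D^2 = n$, and in the case $n > (q-1)^2$ one has $D^2 = (q-1)^2$ (with the right side being $0$ and forcing $x \leq 0$ when $q = 2$) — one obtains $x < q^{k-2}\bigl(1 + \frac{q-1}{n}\bigr)$ in every case. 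This contradicts the choice of $\mf_{i,\alpha}$, so $y = 0$ and $\mf = \mf_{i,\alpha}$ is a star.

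The main obstacle is that the clean estimate $|\mf \cap \mf_{i,\alpha}| \cdot |\mf \setminus \mf_{i,\alpha}| \leq \lambda(q^{k-1}/|\mm|)^2$ of \Cref{Thm:Few or many} is too lossy to be combined with \Cref{Crl:More than few}: one genuinely needs the full strength of the bipartite expander mixing lemma together with the identity $|\mf \setminus \mf_{i,\alpha}| = q^{k-1} - |\mf \cap \mf_{i,\alpha}|$ available at the extremal size. Checking that the two regimes both close is then a careful but elementary computation whose point is that the cut-off $\min\{\sqrt n, q-1\}$ in the hypothesis is calibrated exactly so that the output bound $x < q^{k-2}(1 + \frac{q-1}{n})$ matches — and hence contradicts — the lower bound supplied by Step~1.
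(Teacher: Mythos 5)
Your proposal is correct and follows essentially the same route as the paper's proof: locate a star via \Cref{Crl:More than few}, apply the unweakened first inequality of the bipartite expander mixing lemma to $B(C,i,\alpha)$ together with the eigenvalue bound $\lambda < \frac{|\mm|^2}{q}\bigl(1+\frac{q-1}{\min\{n,(q-1)^2\}}\bigr)$ obtained from the hypothesis and the no-three-collinear condition, and close with the same case split $n \leq (q-1)^2$ versus $n > (q-1)^2$; your rearrangement (upper-bounding $x$ instead of lower-bounding $\lambda$) is algebraically equivalent to the paper's argument. The final computation you leave as ``elementary'' does indeed close in both regimes (it reduces to $n \geq 1$ and $q \geq 1$ respectively), and your explicit check that the hypotheses force $\mm \neq \emptyset$ and the EKR module property needed for \Cref{Crl:More than few} is a detail the paper leaves implicit.
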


\begin{proof}
 Let $\mf$ be an intersecting family of size $q^{k-1}$ in $C$.
 By \Cref{Crl:More than few}, there exists a star $\mf_{i,\alpha}$ such that
 \begin{equation}
  \label{Eq:Strict EKR:1}
  |\mf \cap \mf_{i,\alpha}| \geq q^{k-2} \left( 1 + \frac{q-1}n \right).
 \end{equation}
 On the other, hand we can apply \Cref{Thm:Few or many} to say something about the possible values of $|\mf \cap \mf_{i,\alpha}|$.
 Instead of applying the statement of \Cref{Thm:Few or many}, we instead apply the first inequality of (\ref{Eq:Few or many:1}):
 Let $B = B(C,i,\alpha)$, $L = \mf_{i,\alpha}$, $R = C \setminus L$, $S = \mf \cap L$, and $T = \mf \cap R$.
 Note that $|L| = |\mf| = q^{k-1}$ and $|R| = (q-1)q^{k-1}$.
 Note also that $|S| + |T| = |\mf| = q^{k-1}$.
 Define $\lambda$ as in \Cref{Thm:Few or many}.
 Then (\ref{Eq:Few or many:1}) tells us that
 \[
  \left( \frac{|\mm|}{q^{k-1}} |S| (q^{k-1}-|S|) \right)^2 \leq \lambda |S| \frac{q^{k-1}-|S|}{q^{k-1}}(q^{k-1}-|S|)\frac{(q-2)q^{k-1} + |S|}{(q-1)q^{k-1}}.
 \]
 From now on, we will assume that $\mf \neq \mf_{i,\alpha}$, and derive a contradiction.
 In this case, $|S| < q^{k-1}$, and the above inequality simplifies to
 \[
  |\mm|^2 |S| \leq \lambda \frac{(q-2)q^{k-1} + |S|}{q-1}.
 \]
 This is equivalent to
 \[
  \left( |\mm|^2 \frac{q-1}\lambda - 1 \right) |S| \leq (q-2) q^{k-1}.
 \]
 Using (\ref{Eq:Strict EKR:1}), this implies that
 \[
  \left( |\mm|^2 \frac{q-1}\lambda - 1 \right) q^{k-2} \left( 1 + \frac{q-1}n \right) \leq (q-2) q^{k-1}.
 \]
 This inequality can be rewritten as
 \begin{equation}
  \label{Eq:Strict EKR:2}
  \lambda \geq |\mm|^2 \frac{q-1}{\frac{q(q-2)}{1 + \frac{q-1}n}+1}.
 \end{equation}
 On the other hand, define $r$ to be the maximum value of
 \[
  \left| |\sett{\Pi \in \mm}{P \in \Pi}| - \frac{|\mm|}q  \right|
 \]
 over all points $P \notin \ms$.
 Recall that
 \[
  \lambda = q \sum_{P \in \ell \setminus \{Q\}} \left( |\sett{\Pi \in \mm}{P \in \Pi}| - \frac{|\mm|}q  \right)^2 
 \]
 for some point $Q \in \ms$ and some line $\ell$ through $Q$.
 By the assumptions of the theorem, $\ell \setminus \{Q\}$ contains at most one point of $\ms$, which is of course not incident to any of the hyperplanes of $|\mm|$.
 This yields that
 \[
  \lambda \leq q (q-1)r^2 + \frac{|\mm|^2}{q}.
 \]
 Combined with (\ref{Eq:Strict EKR:2}), this implies that
 \[
  q(q-1)r^2 + \frac{|\mm|^2}q \geq |\mm|^2 \frac{q-1}{\frac{q(q-2)}{1 + \frac{q-1}n}+1}
 \]
 This is equivalent to the following inequality (we omit some tedious arithmetic)
 \[
  r^2 \geq \frac{|\mm|^2}{q(q-1)} \left( \frac{q-1}{\frac{q(q-2)}{1 + \frac{q-1}n}+1} - \frac 1q \right)
  = \frac{|\mm|^2}{q^2(q-1)} \frac{n+q^2-q-1}{n(q-1)+1}
 \]
 On the other hand, by the assumptions from the theorem,
 \[
  r^2 < \frac{|\mm|^2}{q^2} \frac 1{\min\{n,(q-1)^2\}}.
 \]
 This implies that
 \begin{align}
  \label{Eq:Strict EKR:3}
  && \frac1{q-1} \frac{n+q^2-q-1}{n(q-1)+1} 
  & \leq r^2 \frac{q^2}{|\mm|^2} 
  < \frac{1}{\min\{n,(q-1)^2\}} \notag \\ \implies &&
  \min\{n,(q-1)^2\}(n+q^2-q-1) 
  & < (q-1)(n(q-1)+1).
 \end{align}
 We will prove that this leads to a contradiction.
 First, suppose that $n < (q-1)^2$.
 Then (\ref{Eq:Strict EKR:3}) becomes
 \[
  n \left( n + (q^2 - q - 1) \right) < n(q-1)^2 + q-1,
 \]
 which simplifies to
 \[
  n^2 + (q-2)n < q-1.
 \]
 This is a contradiction for $n > 1$ and $q \geq 2$.

 Next, suppose that $n \geq (q-1)^2$.
 Then (\ref{Eq:Strict EKR:3}) becomes
 \[
  (q-1)^2(n+q^2-q-1) < (q-1)(n(q-1) + 1),
 \]
 which after dividing by $q-1$ becomes
 \[
  (q-1)(q^2-q-1) < 1.
 \]
 Using that $q \geq 2$, this also yields a contradiction.
\end{proof}

\subsection{A stability result}
 \label{Sec:Stab}

In this subsection, we go beyond the strict EKR property, and use the developed tools to prove \Cref{Thm:Stability}, which describes a stability result.

\bigskip

\noindent \textbf{\Cref{Thm:Stability}.}
\textit{Let $\mq$ be an infinite set of prime powers, and suppose that for each $q \in \mq$, there exists a linear code $C_q$ with parameters $[n_q,k]_q$ (where $n_q$ can vary, but $k$ is fixed).  Denote by $\ms_q$ the projective system associated with $C_q$, and by $\mm_q$ the set of hyperplanes that don't intersect $\ms_q$.  Suppose that there exist constants $a,b,\delta,\mu,\tau$ with $0 < \mu < 1$ such that the following properties hold for all of $q \in \mq$:  \begin{enumerate}   \item no 3 points of $\ms_q$ are collinear,   \item $n_q \leq aq+b$,   \item $\big| |\mm_q| - \mu q^{k-1}\big| \leq \tau q^{k-2}$,   \item for each point $P$ of $\pg(k-1,q)$ outside of $\ms_q$, $\left| \sett{\Pi \in \mm_q}{P \in \Pi} - \frac{|\mm_q|}q \right| \leq \delta q^{k-3}$.  \end{enumerate}  Then there exists a constant $\sigma$ such that in each of the codes $C_q$, every intersecting family $\mf$ with \[  |\mf| \geq \max\left\{ \frac1{\sqrt{1+a^{-1}}}, \, 1-\mu \right\} q^{k-1} + \sigma q^{k-2}  \] is contained in a star.}

\begin{proof}
 Suppose that the family of linear codes $(C_q)_{q \in \mq}$ satisfies the hypotheses of the theorem, and let $r$ denote $\max \Big\{ \frac1{\sqrt{1+a^{-1}}}, \, 1-\mu \Big\} $.
 We will prove that the theorem holds for sufficiently large values of $q$, which suffices, since we can take $\sigma$ large enough so that $r q^{k-1} + \sigma q^{k-2} > q^k$ for small values of $q$.
 Let $\mf_{i,\alpha}$ be the star that has the biggest intersection with $\mf$, and write $x = |\mf \cap \mf_{i,\alpha}|$.
 
\bigskip

\noindent {\it Claim 1: There exists a constant $\tau_1$ (independent of $q$) such that for $q$ sufficiently large,
 \[
  x \geq \frac{1 + a^{-1}}q |\mf| - \tau_1 q^{k-3}.
 \]}
We apply \Cref{Thm:more than few}.
Let $t$ be the minimum number of hyperplanes of $\mm_q$ incident with a point $P \notin \ms_q$.
Then $t \geq \frac{|\mm_q|}q - \delta q^{k-3} \geq \mu q^{k-2} - (\tau + \delta) q^{k-3}$.
Hence, $t > 0$ for $q$ sufficiently large.
By \Cref{Thm:more than few},
\begin{equation}
 \label{Eq:Stab 1}
 x \geq \frac{|\mf|}q + \frac{q^{k-1}}{aq+b} \left( \frac{|\mf|}{q^k} \left( \frac{|\mm_q|}t - 1\right) + 1 - \frac{|\mm_q|}{qt} \right).
\end{equation}
Since $t > 0$, the weak EKR property holds, hence $|\mf| \leq q^{k-1}$.
Thus, the right hand side is decreasing in $\frac{|\mm_q|}t$.
It holds that
\[
  \frac{|\mm_q|}t \leq \frac{|\mm_q|}{\frac{|\mm_q|}q - \delta q^{k-3}} = \frac q{1 - \frac{\delta q^{k-2}}{|\mm_q|}}
 \leq \frac q{1 - \frac{\delta}{\mu q - \tau}} 
\]
Therefore, there exists a constant $\tau_1'$ such that $\frac{|\mm_q|}t \leq q + \tau_1'$ for $q$ sufficiently large.
Plugging this into (\ref{Eq:Stab 1}) tells us that
\[
 x \geq \frac{|\mf|}q \left( 1 + \frac{q + \tau_1' - 1}{aq+b} \right) - \frac{q^{k-1}}{aq+b} \frac{\tau_1'}q.
\]
Since the right hand side is decreasing in $\tau'_1$, we may suppose that $\tau_1' \geq \frac{b-1}a$, so that $\frac{q+\tau_1' - 1}{aq+b} \geq \frac 1 a$.
It is clear that there exists a constant $\tau_1$ such that $\frac{q^{k-1}}{aq+b} \frac{\tau_1'}q  \leq \tau_1 q^{k-3}$ for $q$ sufficiently large.
This proves the claim.

\bigskip

\noindent {\it Claim 2: There exists a constant $\tau_2$ (independent of $q$) such that for $q$ sufficiently large,
\begin{equation} \label{Eq:Stab 2}
 x(|\mf|-x) \leq q^{2k-3} + \tau_2 q^{2k-4}.
\end{equation}}
We apply \Cref{Thm:Few or many}.
Let $Q \in \ms_q$ be the point corresponding to the star $\mf_{i,\alpha}$, and for each line $\ell$ through $Q$, define
\[
 \lambda_\ell = q \sum_{P \in \ell \setminus \set Q} \left( |\sett{\Pi \in \mm_q}{P \in \Pi} - \frac{|\mm_q|}q \right)^2.
\]
Then we have that $\lambda_\ell \leq q \cdot q \left(\delta q^{k-3}\right)^2$ if $\ell$ contains no second point of $\ms_q$, and 
\[
 \lambda_\ell \leq q \cdot \left( \left( \frac{|\mm_q|}q \right)^2 + (q-1)\left(\delta q^{k-3}\right)^2 \right)
\]
if $\ell$ does contain a second point of $\ms_q$.
For $q$ sufficiently large, the second upper bound dominates.
By \Cref{Thm:Few or many},
\[
 x (|\mf|-x) \leq q \cdot \left( \left( \frac{|\mm_q|}q \right)^2 + (q-1)\left(\delta q^{k-3}\right)^2 \right) \left( \frac{q^{k-1}}{|\mm_q|} \right)^2 = q^{2k-3} + (q-1) \delta^2 q^{2k-5} \left( \frac{q^{k-1}}{|\mm_q|} \right)^2.
\]
Since $|\mm_q| \geq \mu q^{k-1} - \tau q^{k-2}$, and $\mu > 0$, there exists a constant $\tau_2'$ such that $\frac{q^{k-1}}{|\mm_q|} \leq \tau_2'$ for $q$ sufficiently large.
Then the claim follows by taking $\tau_2 = \tau_2'^2 \delta^2$.

\bigskip

We can now finish the proof.
By \Cref{Lm:Many is all}, it suffices to prove that $x > q^{k-1} - |\mm|$, hence it is enough to prove that $x > x_2 = (1-\mu) q^{k-1} + \tau q^{k-2}$.
Define $x_1 = \frac{1+a^{-1}}q |\mf| - \tau_1 q^{k-3}$.
If we can prove that the inequality (\ref{Eq:Stab 2}) does not hold for $x_1$ and $x_2$, then by the theory of quadratic polynomials over the reals, it does not hold on the interval $[x_1,x_2]$.
Since $x \geq x_1$ by Claim 1, this then implies the desired inequality $x > x_2$.

First consider $x = x_1$.
Plugging in $|\mf| \geq \frac1{\sqrt{1+a^{-1}}} q^{k-1} + \sigma q^{k-2}$, we find
\begin{align*}
 x_1 (|\mf|-x_1) \geq & \left( \sqrt{1+a^{-1}} q^{k-2} + (1+a^{-1})(\sigma - \tau_1) q^{k-3} \right) \\
 & \cdot \left( \frac1{\sqrt{1+a^{-1}}} q^{k-1} + \left(\sigma -  \sqrt{1+a^{-1}} \right) q^{k-2} - (1+a^{-1})(\sigma - \tau_1) q^{k-3} \right)
\end{align*}
The leading term on the right hand side is $q^{2k-3}$, and the coefficient of $q^{2k-4}$ can be made arbitrarily large by enlarging $\sigma$.
Hence, for some choice of the constant $\sigma$, this will ensure that inequality (\ref{Eq:Stab 2}) fails for $x_1$ if $q$ is sufficiently large.

Next, consider $x = x_2$.
Then using that $|\mf| \geq (1-\mu) q^{k-1} + \sigma q^{k-2}$, we find that
\[
 x_2 (|\mf|-x_2) \geq \left( (1-\mu) q^{k-1} + \tau q^{k-2} \right) (\sigma - \tau) q^{k-2} > (1-\mu) (\sigma - \tau) q^{2k-3},
\]
where the last inequality holds under the assumption $\sigma \geq \tau$, which we may indeed assume.
We can choose $\sigma$ sufficiently large such that $(1-\mu) (\sigma - \tau) > 1$.
Then it immediately follows that $x_2 (|\mf|-x_2) > q^{2k-3} + \tau_2 q^{2k-4}$ for $q$ sufficiently large.
This finishes the proof.
\end{proof}

\begin{rmk}
 \label{Rmk:HM:2}
 Note that if $\max\left\{ \frac1{\sqrt{1+a^{-1}}}, \, 1-\mu \right\}  = 1-\mu$, then this constant is optimal in the above theorem due to the existence of Hilton-Milner type families, whose size equals $q^{k-1} - |\mm| + 1 = (1-\mu)q^{k-1} + \mo_q(q^{k-2})$ by \Cref{Rmk:HM:1}.
It is not clear to the author whether there exists a family of linear codes for which $1-\mu > \frac1{\sqrt{1+a^{-1}}}$.
\end{rmk}

\section{Intersection problems for polynomials}
 \label{Sec:Stab pol}

 In this section, we prove the theorems concerning ($t$-)intersecting families of polynomials stated in the introduction.
 We translate the problem to the setting of linear codes, so that we can apply the previously developed tools.
 This makes the following observation crucial, despite its trivial nature.

 \begin{lm}
  \label{Lm:Trivial but crucial}
  Suppose that $q$ is a prime power, and $k < q$.
  Recall the map $Ev$ from \Cref{Df:RS}.
  Consider a family $\mf \subseteq \hom k$.
  Define $\me = \sett{Ev(f(X,1))}{f \in \mf}$.
  \begin{enumerate}
   \item $\mf$ is a $t$-intersecting in $\hom k$ if and only if $\me$ is a $t$-intersecting family in $ERS(q,k)$.
   \item $\mf$ is a $t$-star in $\hom k$ if and only if $\me$ is a $t$-star in $ERS(q,k)$. 
  \end{enumerate}
 \end{lm}

 \begin{proof}
  This follows immediately from the definition of $t$-intersecting families in $\hom k$ (\Cref{Df:Pol int}), $t$-intersecting families in linear codes (\Cref{Df:Code int}), the implied definition of $t$-stars in both settings, and the definition of $Ev$ and $ERS(q,k)$ (\Cref{Df:RS}).
 \end{proof}

\subsection{A general bound for \texorpdfstring{$t$}{t}-intersecting families}

\noindent \textbf{\Cref{Prop:t int}.}
\textit{Suppose that $t \leq k < q$, and $k \geq 2$.  Let $\mf$ be a $t$-intersecting family in $\hom k$. \begin{enumerate}  \item If $t < k$, then $|\mf| \leq q^{k+1-t}$.  \item If $t = k$, then $|\mf| < \frac{q^2-1}k + 1$. \end{enumerate}}

\begin{proof}
 (1) We will prove (1) for $t$-intersecting families $\mf$ in $ERS(q,k)$, which is equivalent by \Cref{Lm:Trivial but crucial}.
 Using \Cref{Prop:Weak EKR:t}, it suffices to show that if $t < k$, then $ERS(q,k)$ has a linear $[q+1,t]_q$ MDS subcode $C'$.
 It is well known that $\fq[X]$ contains irreducible polynomials of every degree (this follows e.g.\ from the fact that $\fq$ has a field extension of every finite order).
 Take an irreducible polynomial $f \in \fq[X]$ of degree $k+1-t \geq 2$.
 Define $C'$ to be the image of $\sett{f\cdot g}{g \in \fq[X]_{\leq t-1}}$ under $Ev$.
 Then
 \[
  Ev(f \cdot g) = (f(x_1) g(x_1), \dots, f(x_q) g(x_q), f(\infty_{k+1-t}) g(\infty_{t-1})).
 \]
 Since $f$ has no roots on $\fq \cup \{\infty_{k+1-t}\}$, $C'$ is equivalent to $ERS(q,t-1)$, hence it is a linear $[q+1,t]_q$ MDS code.

 \bigskip
 
 (2) The notation will be easier if we represent the polynomials of $\hom k$ as the polynomials $\fq[X]_{\leq k}$ evaluated on $\fqi$.
 Recall that this transforms a polynomial $F \in \hom k$ into the polynomial $f(X) = F(X,1)$, and we define $f(\infty) = F(1,0)$.

 Take a polynomial $f(X) \in \mf$, and consider the set
 \[
  V = \sett{(x,g) \in (\fqi) \times \mf}{g(x) = f(x)}.
 \]
 We perform a double count on $V$.
 Given a polynomial $g \in \mf$, the number of values $x \in \fqi$ with $f(x) = g(x)$ equals $q+1$ if $f=g$ and $k$ otherwise.
 Therefore,
 \begin{align}
  \label{Eq:t int:1}
  |V| = (q+1) + k(|\mf|-1).
 \end{align}
 On the other hand, we can take a point $x \in \fqi$, and let $y$ be $f(x)$.
 Without loss of generality, we may assume that $x = \infty$ and $y=0$.
 Then $\sett{g \in \mf}{g(\infty) = 0}$  constitutes a $(k-1)$-intersecting family in $\fq[X]_{\leq k-1}$ (ignoring the evaluation at $\infty$).
 Therefore, by \Cref{Res:Weak EKR}, this set contains at most $q$ polynomials.
 We conclude that
 \begin{align}
  \label{Eq:t int:2}
  |V| \leq (q+1) q.
 \end{align}
 If we combine (\ref{Eq:t int:1}) and (\ref{Eq:t int:2}), we find that
 \[
  |\mf| = \frac{|V| - (q+1)}k + 1 \leq \frac{q^2-1}k + 1.
 \]
 If equality holds, then every point $(x,y) \in (\fqi)\times\fq$ is incident with either $0$ or $q$ polynomials of $\mf$.
 Since each polynomial of $\mf$ is incident with exactly $q+1$ points, $q$ must then divide $(q+1)\left( \frac{q^2-1}k + 1 \right)$.
 This implies that $k \equiv 1 \pmod q$.
 Since $2 \leq k < q$, this yields a contradiction.
\end{proof}

\begin{rmk}
 If we take $t=k=1$, then $\mf = \hom 1$ is an intersecting family, and the equality
 \[
  |\mf| = q^2 = \frac{q^2-1}k + 1
 \]
 holds.
\end{rmk}

\subsection{A stability result for intersecting families}

We are ready to prove \Cref{Thm:Main}.
We prove it by applying \Cref{Thm:Stability} to the extended Reed-Solomon codes.
In order to do so, we need some properties concerning their projective systems, the normal rational curves.
The reader should recall the function $\nu$ defined in \Cref{Df:nu}.

We start by reviewing some properties of normal rational curves.
For starters, every projectivity of $\pg(1,q)$ can be lifted to a projectivity of $\pg(k,q)$ that stabilises $\mn$.

\begin{res}[{\cite[Theorem 6.30 (vi)]{Hirschfeld:Thas}}]
 \label{Res:Lift}
 For every map $\phi \in \PGL(2,q)$, there exists a map $\Phi \in \PGL(k+1,q)$ such that for all points $P$ of $\pg(1,q)$, $\nu_k(\phi(P)) = \Phi(\nu_k(P))$.
 As a corollary, the stabiliser in $\PGL(k+1,q)$ of a normal rational curve $\mn$ in $\pg(k,q)$ acts triply transitively on the points of $\mn$.
\end{res}

We want to use the tools developed in the previous section to characterise large intersecting families in extended Reed-Solomon codes.
In order to do so, we need to prove that if $\mn$ is a normal rational curve in $\pg(k,q)$ and $P$ is a point outside of $\mn$, then the number of hyperplanes through $P$ that don't intersect $\mn$ cannot deviate too much from a proportion $1/q$ of these hyperplanes.
We start by establishing the number of hyperplanes intersecting $\mn$ in a given number of points.

\begin{df}
 \label{Df:mu}
 For integers $t$ and $k$ with $0 \leq t \leq k$, define the constants
 \[
  \mu_{k,t} = \frac1{t!}\sum_{j=0}^{k-t} \frac{(-1)^j}{j!}.
 \]
\end{df}

\begin{lm}
 \label{Crl:Weight dist}
 Consider a normal rational curve $\mn$ in $\pg(k,q)$, $k < q$.
 Suppose that $t \leq k$.
 Then there are
\[
 \binom{q+1}{t} \sum_{j=0}^{k-t} (-1)^j \binom{q+2-t}j q^{k-t-j} = \mu_{k,t} q^k + \mo_q(q^{k-1})
\]
 hyperplanes that intersect $\mn$ in exactly $t$ points.
\end{lm}

\begin{proof}
 Recall that every non-zero codeword $c$ of $ERS(q,k)$ of weight $n-t$ corresponds to a unique hyperplane of $\pg(k,q)$ intersecting $\mn$ in $t$ points, and that vice versa, each hyperplane corresponds to $q-1$ codewords.
 The statement of the lemma then directly follows from \Cref{Res:Wt Dist MDS}.
\end{proof}

Next, we check that the constants $\mu_{k,t}$ sum to one.
This can be checked directly by computation, but we can also avoid having to do any arithmetic.

\begin{lm}
 \label{Lm:Sum mu}
 For every positive integer $k$,
 \[
  \sum_{t=0}^k \mu_{k,t} = 1.
 \]
\end{lm}

\begin{proof}
 Fix $k$.
 Consider a normal rational curve $\mn$ in $\pg(k,q)$ and $q > k$.
 If we fix $t$, the number of hyperplanes that intersect $\mn$ in exactly $t$ points is a polynomial $F_{k,t}(q)$ in $q$ of degree at most $k$ by \Cref{Crl:Weight dist}, and its $q^k$ coefficient equals $\mu_{k,t}$.
 The total number of hyperplanes equals $G_k(q) = q^k + q^{k-1} + \ldots + 1$.
 We find that $\sum_{t=0}^k F_{k,t}(q) = G_k(q)$ for all prime powers $q > k$, hence the polynomials must coincide.
 Looking at the coefficient of $q^k$, this proves exactly the statement of the lemma.
\end{proof}

Consider a normal rational curve $\mn$ in $\pg(k,q)$, and a point $Q \in \mn$.
We call $\mn^\circ = \mn \setminus \{Q\}$ a \emph{punctured normal rational curve}, and $Q$ the \emph{punctured point} of $\mn^\circ$.

\begin{res}[{\cite[Lemma 6.31 (i)]{Hirschfeld:Thas}}]
 Suppose that $\mn$ is a normal rational curve in $\pg(k,q)$ and take a point $Q \in \mn$.
 Then $Q$ projects the remaining points of $\mn$ onto a punctured normal rational curve in $\pg(k-1,q)$.
\end{res}


Given a point $Q$ on the normal rational curve $\mn$, let $\mn^\circ$ denote the punctured normal rational curve equal to the projection of $\mn \setminus \{Q\}$ from $Q$, and let $Q'$ be the punctured point.
Then $\proj_Q^{-1}(Q')$ is a line $\ell$ through $Q$ in $\pg(k,q)$.
This line is called the \emph{tangent line} of $Q$ to $\mn$.
Call a line intersecting $\mn$ in two points \emph{bisecant} to $\mn$.

\begin{res}[{\cite[Lemma 6.31]{Hirschfeld:Thas}}]
 \label{Lm:One special line}
 Suppose that $\mn$ is a normal rational curve in $\pg(k,q)$ with $k \geq 3$ and let $P$ be a point not on $\mn$.
 Then there is at most one line through $P$ that is either a tangent or bisecant line to $\mn$.
\end{res}


For the next proposition, we will use the constants $\mu_{k,t}$ defined in \Cref{Df:mu}.

\begin{prop}
 \label{Prop:Bounded deviation}
 For each integer $k \geq 3$, there exists a constant $\delta_k$ (independent of $q$) such that the following holds for every $q > k$.
 Let $\mn$ denote a normal rational curve in $\pg(k,q)$, and let $P$ be a point not on $\mn$.
 For $0 \leq t \leq k$, let $s_t(P)$ denote the number of hyperplanes incident with $P$ that intersect $\mn$ in exactly $t$ points.
 Then
 \[
  \left| s_t(P) - \mu_{k,t} q^{k-1} \right| \leq \delta_k q^{k-2}.
 \]
\end{prop}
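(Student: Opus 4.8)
The idea is to dualise, compute the binomial moments of the sequence $\big(s_t(P)\big)_{0\le t\le k}$, and invert. Write $P=(a_0:\cdots:a_k)$ and associate to it the linear functional $\Lambda\colon\fq[X]_{\le k}\to\fq$ with $\Lambda(X^m)=a_m$. Hyperplanes of $\pg(k,q)$ correspond to nonzero polynomials $\beta$ of degree $\le k$ up to scalars, the one attached to $\beta$ meeting $\mn$ in $\#\{\text{distinct roots of }\beta\text{ in }\fq\}+[\deg\beta<k]$ points and passing through $P$ exactly when $\Lambda(\beta)=0$; the hypothesis $P\notin\mn$ says precisely that $\Lambda$ is not a scalar multiple of an evaluation $\beta\mapsto\beta(c)$ with $c\in\fq$, nor of $\beta\mapsto$ (coefficient of $X^k$). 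For $0\le i\le k$ put $T_i=\sum_t\binom ti s_t(P)$; this counts the pairs $(I,H)$ with $I$ an $i$-subset of $\mn$ and $H$ a hyperplane containing $P$ and all of $I$. As $\mn$ is an arc (\Cref{Res:Arc}), any $i$-subset $I$ with $i\le k$ spans an $(i-1)$-dimensional subspace $\langle I\rangle$, so $\langle P,I\rangle$ has dimension $i$ or $i-1$ according as $P\notin\langle I\rangle$ or $P\in\langle I\rangle$; since a subspace of dimension $m$ lies on $\tfrac{q^{k-m}-1}{q-1}$ hyperplanes, setting $b_i:=\#\{i\text{-subsets }I\text{ of }\mn:P\in\langle I\rangle\}$ gives
\[
 T_i=\binom{q+1}{i}\frac{q^{k-i}-1}{q-1}+b_i\,q^{k-i},\qquad 0\le i\le k.
\]
Here $T_0=\tfrac{q^k-1}{q-1}$, $b_0=b_1=0$, and $\binom{q+1}{i}\tfrac{q^{k-i}-1}{q-1}=\tfrac{q^{k-1}}{i!}+\mo(q^{k-2})$ for $i\le k-1$, so the whole proposition reduces to two statements (with constants depending only on $k$): (i) $b_i(P)=\mo(q^{i-2})$ for $0\le i\le k-1$, and (ii) $T_k(P)=b_k(P)=s_k(P)=\tfrac{q^{k-1}}{k!}+\mo(q^{k-2})$. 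Granting them, $T_i=\tfrac{q^{k-1}}{i!}+\mo(q^{k-2})$ for all $i$, and binomial inversion $s_t(P)=\sum_{i\ge t}(-1)^{i-t}\binom it T_i$ yields $s_t(P)=q^{k-1}\sum_{i=t}^k\tfrac{(-1)^{i-t}}{t!\,(i-t)!}+\mo(q^{k-2})=\mu_{k,t}q^{k-1}+\mo(q^{k-2})$, which is what we want.

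For (i) we turn ``$P\in\langle I\rangle$'' into linear algebra. If $I=\{\nu_k(x):x\in B\}$ with $B\subseteq\fq$ of size $i$, then $\langle I\rangle$ is cut out by the polynomials $p_B\cdot\fq[X]_{\le k-i}$ where $p_B=\prod_{x\in B}(X-x)$, so $P\in\langle I\rangle$ iff the coefficient vector $(c_0,\dots,c_i)$ of $p_B$ (with $c_i=1$) lies in the kernel of the Hankel matrix $H_i:=(a_{j+m})_{0\le j\le k-i,\ 0\le m\le i}$; the subsets $I$ containing $\nu_k(\infty)$ are treated identically with $H_i'$, the matrix obtained by deleting the last column of $H_i$. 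Since a given subspace of $\fq[X]_{\le i}$ contains at most $q^{\dim-1}$ monic polynomials of top degree, $b_i(P)\le q^{\dim\ker H_i-1}+q^{\dim\ker H_i'-1}$. Now $H_i'\ne 0$ (otherwise $a_0=\cdots=a_{k-1}=0$, i.e. $P=\nu_k(\infty)\in\mn$), so $\dim\ker H_i'\le i-1$; and for $2\le i\le k-1$ the matrix $H_i$ has at least two rows and two columns and its entries exhaust $a_0,\dots,a_k$, so by the classical fact that the normal rational curve is the common zero locus of the $2\times2$ minors of such a Hankel matrix, $P\notin\mn$ forces $\rk H_i\ge 2$ and hence $\dim\ker H_i\le i-1$. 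This gives $b_i(P)=\mo(q^{i-2})$ for $2\le i\le k-1$ (the cases $i=0,1$ are trivial; for $i=2$ one may instead quote \Cref{Lm:One special line}).

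For (ii), $s_k(P)$ counts the $k$-subsets of $\mn$ whose span (a hyperplane) contains $P$; those not containing $\nu_k(\infty)$ contribute $\tfrac1{k!}$ times the number of ordered tuples $(x_1,\dots,x_k)$ of distinct elements of $\fq$ with $\Lambda\big(\prod_j(X-x_j)\big)=0$, and those containing $\nu_k(\infty)$ contribute at most $b_{k-1}(P)=\mo(q^{k-3})$ by (i). For the tuple count, fix $x_2,\dots,x_k$ and set $g=\prod_{j\ge2}(X-x_j)$: then $\Lambda\big((X-x_1)g\big)=\Lambda(Xg)-x_1\Lambda(g)$ is affine in $x_1$, so there is a unique admissible $x_1$ whenever $\Lambda(g)\ne0$; the set where $\Lambda(g)=0$ has size $\mo(q^{k-3})$ (again by reduction to the $b_{\le k-1}$ bounds of (i); $\Lambda(g)$ is not the zero polynomial since $\Lambda$ does not annihilate $\fq[X]_{\le k-1}$), and the tuples for which the unique $x_1$ coincides with some $x_j$ number $\mo(q^{k-2})$ by the same ``solve for one root'' device. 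Hence there are $q^{k-1}+\mo(q^{k-2})$ such tuples, so $s_k(P)=\tfrac{q^{k-1}}{k!}+\mo(q^{k-2})$; since $\mu_{k,k}=\tfrac1{k!}$ this is exactly (ii).

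I expect the rank bound in step (i) to be the main obstacle: one must show that for $P\notin\mn$ the Hankel matrix $H_i$ has rank at least $2$, equivalently that any length-$(k+1)$ vector all of whose $2\times2$ Hankel minors vanish is a scalar multiple of $(1,\gamma,\dots,\gamma^k)$ for some $\gamma$ or of $(0,\dots,0,1)$ --- this is where the geometry of the normal rational curve genuinely intervenes, and it will require some care in small characteristic (note $q>k$ but $q$ need not be prime) and in the degenerate cases where several of the $a_\ell$ vanish. A secondary, purely bookkeeping, point is to arrange the error terms in (ii) so that the factor $q$ produced by ``$x_1$ arbitrary when $\Lambda(g)=0$'' multiplies an $\mo(q^{k-3})$ quantity rather than an $\mo(q^{k-2})$ one; this is exactly why (i) is needed as an input to (ii). Finally, for the finitely many small $q$ (given $k$) the asserted inequality becomes vacuous once $\delta_k$ is taken large, so throughout one may assume $q$ large in terms of $k$.
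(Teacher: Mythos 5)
Your proof is correct in substance but takes a genuinely different route from the paper's. The paper argues by induction on $k$: the base case $k=3$ is imported from the Bartoli--Davydov--Marcugini--Pambianco tables for the twisted cubic, and the inductive step projects $\mn$ from each of its points $Q$ onto a punctured normal rational curve in $\pg(k-1,q)$, using \Cref{Lm:One special line} to control the at most two exceptional projections and the identity $\mu_{k,t}=\tfrac1t\mu_{k-1,t-1}$ to drive the recursion for $\delta_k$. You instead compute the binomial moments $T_i=\sum_t\binom ti s_t(P)$ directly by counting flags $(I,H)$ with $I$ an $i$-subset of the arc $\mn$ (\Cref{Res:Arc}), reduce everything to the bound $b_i=\mo(q^{i-2})$ on the number of $i$-subsets whose span contains $P$, obtain that bound from the rank of the Hankel matrices $H_i$, $H_i'$, and recover $\mu_{k,t}$ by binomial inversion. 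Your route is self-contained (no external base case, no induction) and makes the appearance of the partial exponential sums $\mu_{k,t}$ transparent; the paper's route avoids the determinantal description of the curve and the degenerate-case analysis in your step (ii). The rank fact you flag as the main obstacle is genuinely classical and, reassuringly, characteristic-free: if all $2\times2$ minors of $H_i$ vanish and $\ell_0$ is the least index with $a_{\ell_0}\neq0$, then for $0<\ell_0<k$ one can pick positions $(j_1,m_2)$ and $(j_2,m_1)$ with $j_1+m_2=j_2+m_1=\ell_0$ and $j_1+m_1<\ell_0$ (possible precisely because $H_i$ has at least two rows and two columns when $2\le i\le k-1$), producing the nonzero minor $-a_{\ell_0}^2$; for $\ell_0=0$ proportionality of the rows forces $a_\ell=a_0\gamma^\ell$ for all $\ell$, i.e.\ $P\in\mn$; and $\ell_0=k$ is the point at infinity. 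So rank $\le1$ occurs only on $\mn$, in every characteristic.

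Two slips in step (ii), neither fatal. First, the $k$-subsets of $\mn$ containing $\nu_k(\infty)$ are not controlled by $b_{k-1}$: their span is already a hyperplane, so the relevant count is the number of monic square-free split polynomials of degree $k-1$ in $\ker\Lambda$, which is at most $q^{k-2}$ (not $\mo(q^{k-3})$) --- still $\mo(q^{k-2})$, which is all that (ii) requires. Second, the set of $(x_2,\dots,x_k)$ with $\Lambda(g)=0$ has size $\mo(q^{k-2})$, not $\mo(q^{k-3})$; the set that must be (and is, via $b_{k-1}$) of size $\mo(q^{k-3})$ is where the affine equation in $x_1$ degenerates entirely, i.e.\ $\Lambda(g)=\Lambda(Xg)=0$, which is exactly the condition $P\in\langle\nu_k(x_2),\dots,\nu_k(x_k)\rangle$. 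Your closing paragraph shows you know this; just make sure the write-up says it. Finally, for the coincidence count $x_1=x_j$ in characteristic $2$, the quadratic in $x_2$ is identically zero as soon as $\Lambda(h)=\Lambda(X^2h)=0$, but these two functionals cannot both vanish on $\fq[X]_{\le k-2}$ (their common vanishing would force $\Lambda=0$ since $k\ge3$), so the degenerate tuples still number $\mo(q^{k-3})$ and contribute $\mo(q^{k-2})$ after the factor $q$. With these points tidied, the argument goes through.
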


\begin{proof}
 For $k = 3$, this follows from \cite[Tables 1 and 2]{Bartoli:DMP:20}.
 We prove that it holds for bigger values of $k$ by induction.
 Take a point $P$ of $\pg(k,q)$, not on $\mn$.
 For each point $Q \in \mn$, let $s_t(P,Q)$ denote the number of hyperplanes incident with both $P$ and $Q$ that intersect $\mn$ in exactly $t$ points.
 Then $\sum_{Q \in \mn} s_t(P,Q) = t s_t(P)$.

 First consider the case where $t > 0$, and take some point $Q \in \mn$.
 Consider the projection from $Q$, let $P'$ be $\proj_Q(P)$, and let $\mn^\circ$ be the image of $\mn \setminus \{Q\}$ under $\proj_Q$.
 Let $Q'$ be the punctured point of $\mn^\circ$.
 It follows from \Cref{Lm:One special line} that at most two points $Q \in \mn$ project $P$ onto a point of $\mn^\circ \cup \{Q'\}$.
 In the other cases, $s_t(P,Q)$ equals the number of hyperplanes in $\pg(k-1,q)$ through $\proj_Q(P)$ that intersect $\mn^\circ \cup \{Q'\}$ in exactly $t-1$ points.
 By the induction hypothesis,
 \[
 \left|  s_t(P,Q) - \mu_{k-1,t-1} q^{k-2} \right| \leq \delta_{k-1} q^{k-3}.
 \]
 Therefore,
 \begin{align*}
  \sum_{Q \in \mn} s_t(P,Q)  
 & \geq (q-1) \left(\mu_{k-1,t-1} q^{k-2} - \delta_{k-1} q^{k-3} \right), \text{ and}  \\
 \sum_{Q \in \mn} s_t(P,Q) 
 &\leq (q-1) \left( \mu_{k-1,t-1} q^{k-2} + \delta_{k-1} q^{k-3} \right) + 2 (q^{k-2} + \ldots + q + 1).
 \end{align*}
 Write $\delta' = \delta_{k-1} + 4$.
 Note that $\frac 1 t \mu_{k-1,t-1} = \mu_{k,t}$.
 Then using $s_t(P) = \frac 1t \sum_{Q \in \mn} s_t(P,Q)$, we find that
 \[
  \left| s_t(P) - \mu_{k,t} q^{k-1} \right| \leq \frac{\delta'}{t} q^{k-2}.
 \]

 Lastly, consider the case $t=0$.
 We have that
 \[
  s_0(P) = q^{k-1} + \ldots + q + 1 - \sum_{t=1}^k s_t(P).
 \]
 Using \Cref{Lm:Sum mu}, it easily follows that
 \[
  \mu_{k,0} q^{k-1} - \delta' \sum_{t=1}^k \frac 1 t q^{k-2} \leq s_0(P) \leq \mu_{k,0} q^{k-1} + \left( 2 + \delta' \sum_{t=1}^k \frac 1 t \right) q^{k-2}.
 \]
Therefore, we can take
\[
 \delta_k = 2 + (\delta_{k-1} + 4) \sum_{t=1}^k \frac 1 t.
\]
This finishes the proof.
\end{proof}

We are now ready to prove \Cref{Thm:Main}.
We repeat the theorem statement for the reader's convenience.

\bigskip

\noindent \textbf{\Cref{Thm:Main}.}
\textit{For every integer $k \geq 3$, there exists a constant $\sigma_k$,  such that for all prime powers $q$, every intersecting family $\mf \subseteq \hom k$ with $|\mf| \geq \frac1{\sqrt 2} q^k + \sigma_k q^{k-1}$ is contained in a star.}

\begin{proof}
 By \Cref{Lm:Trivial but crucial}, it suffices to prove the theorem for intersecting families $\mf$ in $ERS(q,k)$.
 Let $\mq$ be the set of prime powers.
 We check that if we fix $k \geq 3$, the family of codes $(ERS(k,q))_{q \in \mq}$ satisfies the hypotheses of \Cref{Thm:Stability}.
Let us check this using the notation $\mm_q$, $\ms_q$, $a,b,\delta,\mu,\tau$ from \Cref{Thm:Stability}.
We do point out that the code $ERS(q,k)$ has dimension $k+1$, so we should replace $k$ by $k+1$ in the statement of \Cref{Thm:Stability}.
\begin{enumerate}
 \item Since $\ms_q$ is a normal rational curve in $\pg(k,q)$, no 3 points of $\ms_q$ are collinear by \cref{Res:Arc}.
 \item The length of the code is given by $n_q = q+1$, hence we can take $a = b = 1$.
 \item Write $\mu = \mu_{k,0}$ as defined in \Cref{Df:mu}. Then $|\mm_q|$ is a polynomial in $q$ with leading term $\mu q^k$, hence $\big| |\mm_q| - \mu q^k \big| \leq \tau q^{k-1}$ for some constant $\tau$.
 \item Let $\delta_k$ be as in \Cref{Prop:Bounded deviation}. Then by \Cref{Prop:Bounded deviation} and the previous point, for each point $P \notin \ms_q$,
 \begin{align*}
   \left| \sett{\Pi \in \mm_q}{P \in \Pi} - \frac{|\mm_q|}q \right| 
   & \leq \left| \sett{\Pi \in \mm_q}{P \in \Pi} - \mu_{k,0} q^{k-1} \right| + \left| \mu_{k,0} q^{k-1} - \frac{|\mm_q|}q \right| \\
   & \leq \delta_k q^{k-2} + \tau q^{k-2}.
 \end{align*}
 Hence, we can take $\delta = \delta_k + \tau$.
\end{enumerate}
Consider $\mu = \mu_{k,0} = \sum_{j=0}^k \frac{(-1)^j}{j!}$.
This is a partial sum of $\sum_{j=0}^\infty \frac{(-1)^j}{j!} = e^{-1}$.
Since it is an alternating series whose terms have decreasing absolute value, 
 \begin{align}
 \label{Eq:Mu bound}
  \frac 13 = \mu_{3,0} \leq \mu_{k,0} \leq \mu_{4,0} = \frac 38
 \end{align}
 for all $k \geq 3$.
This proves that $0 < \mu < 1$.
Moreover,
\[
 \frac{1}{\sqrt{1 + a^{-1}}} = \frac 1{\sqrt 2} > \frac 2 3 \geq 1 - \mu.
\]
Thus, \Cref{Thm:Stability} implies that the theorem statement holds.
\end{proof}

\begin{rmk}
 \label{Rmk:HM:3}
 One naturally wonders how close the value $\frac 1 {\sqrt 2}$ in \Cref{Thm:Stability} is to being optimal.
 As discussed before, an intersecting family of Hilton-Milner type would have size $(1-\mu_{k,0})q^k + \mo_q(q^{k-1})$, so it is not unreasonable to expect $1-\mu_{k,0}$ to be the optimal constant.
 For comparison, note that $\frac1{\sqrt 2} \approx 0.707$, and we have that $0.625 \leq 1-\mu_{k,0} \leq 0.666\dots$ by (\ref{Eq:Mu bound}).
 Moreover, $1-\mu_{k,0} \to 1 - e^{-1} \approx 0.632$ if $k \to \infty$.
\end{rmk}

\section{Conclusion}
 \label{Sec:Conclusion}

In this paper, we established a general framework for investigating intersection problems in linear codes.
We used it to prove a stability result concerning intersecting families of polynomials.
Some questions remain.

\begin{prob} 
 We proved that an intersecting family in $\hom k$ with $k \geq 3$ of size $\frac 1{\sqrt 2} q^k + \mo(q^{k-1})$ must be contained in a star?
 Can we improve the constant to $\sum_{j=1}^k \frac{(-1)^{j+1}}{j!}$?
 Can we prove that $\hom k$ satisfies the HM property?
\end{prob}

The constant $\sum_{j=1}^k \frac{(-1)^{j+1}}{j!}$ is minimal for $k = 2$, and a Hilton-Milner type family in $\hom 2$ would have size $\frac 12 (q^2+q)$.
It could hence be the case the establishing the HM property is hardest for $\hom 2$.
Moreover, there is an even more concrete difficulty.
If $\mf$ is a star or a Hilton-Milner type family, then there exists a special point incident with all or all but one block of $\mf$.
However, there are intersecting families of size $\ceil{\frac {q^2}2}$, with a vastly different structure, see \cite[Example 4.2]{Aguglia:Csajbok:Weiner} and \cite[Construction 4.4.3]{thesis}, where each point is incident with at most $q$ blocks of such a family.
An interesting open problem is to find similar constructions of higher degree.

\begin{prob}
 Can we construct large intersecting families $\mf$ in $\hom k$ where every point is incident with a small number of blocks of $\mf$?
\end{prob}

The characterisation of maximum $t$-intersecting families in $\hom k$ (and even in $\fq[X]_{\leq k}$) is still open for $t > 1$.

\begin{prob}
 We proved that if $t < k < q$, a $t$-intersecting family in $\hom k$ contains at most $q^{k+1-t}$ elements.
 Can we prove that only stars achieve equality?
 For certain ranges of the parameters $t$, $k$, $q$, this might be feasible using the \emph{spread approximations method} \cite{Kupavskii:Zakharov}.
 Moreover, can we improve the bound $\frac{q^2-1}k + 1$ on the size of $k$-intersecting families to a linear bound?
\end{prob}

Finally, it would be interesting to find other settings than linear codes where the techniques of this paper can yield stability results of intersecting families.
In \cite{A3-2022-Stability}, the author used the technique to prove stability results for intersecting families in \emph{circle geometries}.
As a special case, this yields a stability result of intersecting families in $\pgl(2,q)$.

\begin{prob}
 Given a permutation group $G$ acting on a set $X$, call $g, h \in G$ \emph{intersecting} if there exists an $x \in X$ with $x^g = x^h$.
 Use the techniques of the paper to prove stability results of intersecting families in certain classes of permutation groups.
\end{prob}

Of course, other settings than permutation groups would also be interesting.

\subsection*{Acknowledgements}

The author is thankful to Nathan Lindzey and Ferdinand Ihringer for interesting discussions concerning the contents of the paper.

\newcommand{\etalchar}[1]{$^{#1}$}

\appendix

\section{Tactical decompositions and translation schemes}
 \label{Sec:Assoc Sch}

In this section, we review the theory of association schemes, and in particular translation schemes.
We refer the reader to \cite[Chapter 2]{bcn} or \cite[Chapter 6]{Godsil} for more background.
Ultimately, we apply the theory determine the eigenvalues of certain graphs defined on the vector space of homogeneous polynomials over finite fields.

\subsection{Association schemes}

An association scheme admits several definitions, and can be seen as a partition $\ma = \{R_0, \dots, R_d\}$ of the cartesian product $X \times X$ of some finite set $X$ into some symmetric relations satisfying certain regularity conditions.
Each such relation induces a graph on $X$.
Let $A_i$ denote the adjacency matrix of $R_i$.

\begin{df}
 Given a finite set $X$, a partition $\ma = \{R_0, \dots, R_d\}$ of $X \times X$ is called a \emph{$d$-class (symmetric) association scheme} if $R_0$ is the identity relation (i.e.\ $A_0$ equals the identity matrix $I$), every adjacency matrix $A_i$ is symmetric, and the linear subspace of $\RR^{X \times X}$ generated by $A_0, \dots, A_d$ is a commutative algebra under the ordinary matrix product.
\end{df}


We denote by $\RR[\ma]$ the linear subspace of $\RR^{X \times X}$ spanned by $\ma$, called the \emph{Bose-Mesner algebra} of $\ma$.
A crucial property of association schemes is that $\RR[\ma]$ admits another important basis $E_0 = \frac1{|X|} J, E_1, \dots, E_d$, where each $E_j$ is an orthogonal projection, and the column spaces of the $E_j$'s constitute an orthogonal decomposition of $\RR^X$.
This basis is unique up to ordering.
We call it the \emph{dual basis} of $\RR[\ma]$.

The transition matrices between the two aforementioned bases of $\RR[\ma]$ capture crucial information about the association scheme.
Define the matrix $\BP \in \RR^{\{0, \dots, d\} \times \{0, \dots, d\}}$ by $A_j = \sum_{i=0}^d \BP(i,j) E_i$, and similarly the matrix $\BQ$ by $E_j = \frac 1{|X|} \sum_{i=0}^d \BQ(i,j) A_i$.
We call $\BP$ and $\BQ$ the \emph{matrices of eigenvalues} or \emph{character tables} of the association scheme.




\begin{df}
 Suppose that $(G,+)$ is a finite abelian group with identity element $0$, and that $S_0 = \set 0, S_1, \dots, S_d$ is a partition of $G$ into inverse-closed subsets.
 For each $S_i$, let $R_i$ be the relation corresponding to the Cayley graph $\Cay(G,S_i)$.
 If $\{R_0, \dots, R_d\}$ is an association scheme, it is called a \emph{translation scheme}.
\end{df}

Each translation scheme $\ma$ has a dual association scheme.
This works as follows.
The group $\hat G$ of characters of $G$ is isomorphic to $G$.
The characters of $G$ constitute an orthogonal basis of $\CC^G$ that diagonalises all the adjacency matrices $A_i$ of $\ma$.
Each matrix $E_j$ of the dual basis, when viewed as a complex matrix, is the orthogonal projection onto a subspace of $\CC^G$ that is spanned by some subset $T_j \subset \hat G$ of characters of $G$.
This yields a partition $T_0 = \set 0, T_1, \dots, T_d$ of the elements of $\hat G$, and this partition again gives rise to a translation scheme $\ma^*$ on $\hat G$.
Since $G \cong \hat G$, $\ma^*$ is isomorphic to a translation scheme on $G$.
If $\BP$ and $\BQ$ are the matrices of eigenvalues of $\ma$, then $\BQ$ and $\BP$ (in that order) are the matrices of eigenvalues of $\ma^*$.
Moreover, the dual translation scheme $\ma^{**}$ of $\ma^*$ is isomorphic to $\ma$.
If $\BP = \BQ$, then $\ma$ is said to be \emph{formally self-dual}.
If in addition $\ma$ is isomorphic to $\ma^*$, then $\ma$ is called \emph{self-dual}.

\subsection{Tactical decompositions}

Consider the projective space $\pg(k-1,q)$.
Let $\mathfrak S = \{S_1, \dots, S_d\}$ be a partition of the hyperplanes and $\mathfrak T = \{T_1, \dots, T_d\}$ a partition of the points.
Then $(\fs, \ft)$ is called a \emph{tactical decomposition} of $\pg(k-1,q)$ if there exist integers $s_{ij}$ and $t_{ij}$ such that
\begin{itemize}
  \item every hyperplane in $S_i$ is incident with exactly $s_{ij}$ points of $T_j$,
  \item every point in $T_i$ is incident with exactly $t_{ij}$ hyperplanes of $S_j$.
\end{itemize}
We note that the condition that $\fs$ and $\ft$ have the same number of parts (in our case $d$) is necessary for a tactical decomposition to exist, as proven by Block \cite[Corollary 2.1]{Block:67}.
For every group $G \leq \PGamL(k,q)$ of collineations, we can define $\fs$ and $\ft$ to be the sets of orbits of $G$ on the hyperplanes and points respectively.
Then $(\fs,\ft)$ is always a tactical decomposition.

Given a tactical decomposition $(\fs,\ft)$, define $S^*_0 = T^*_0 = \{\zero\}$, and for each $i=1,\dots,d$ define
\begin{align*}
 S^*_i &= \sett{ v \in \FF_q^k \setminus \set \zero }{ v^\perp \in S_i }, \\
 T^*_i &= \sett{ w \in \FF_q^k \setminus \set \zero }{ \vspan w\in T_i }.
\end{align*}
Then the Cayley graphs given by $\Cay(\FF_q^k, S_i)$ determine a translation scheme $\ma$ on $(\FF_q^k,+)$.
Likewise, the Cayley graphs given by $\Cay(\FF_q^k, T_i)$ determine a translation scheme $\ma^*$ on $(\FF_q^k,+)$, and these schemes are (isomorphic to) each others duals.

Using the the discussion of \Cref{Sec:Eigval Cayley} (or see \cite[Theorem 3.1.3]{thesis}), it is straightforward to determine that the matrices of eigenvalues of $\ma$ are given by
{\allowdisplaybreaks \begin{align*}
 \BP = \begin{pmatrix}
 1 & (q-1)|S_1| & \dots & (q-1)|S_d| \\
  1 & q t_{11} - |S_1| & \dots & q t_{1d} - |S_d| \\
  \vdots & & & \vdots \\
  1 & q t_{d1} - |S_1| & \dots & q t_{dd} - |S_d|
 \end{pmatrix},
  &&
 \BQ = \begin{pmatrix}
  1 & (q-1)|T_1| & \dots & (q-1)|T_d| \\
  1 & q s_{11} - |T_1| & \dots & q s_{1d} - |T_d| \\
  \vdots & & & \vdots \\
  1 & q s_{d1} - |T_1| & \dots & q s_{dd} - |T_d|
 \end{pmatrix}.
\end{align*}}
A \emph{polarity} of $\pg(k-1,q)$ is an involution of the subspaces of $\pg(k-1,q)$ that reverses inclusion.
A polarity maps points to hyperplanes and vice versa.
If $\zeta$ is a polarity that maps the set of hyperplanes $S_i$ to the set of points $T_i$ (and vice versa since $\zeta$ is a polarity) for each $i$, then $\zeta$ induces an isomorphism between $\ma$ and $\ma^*$.
Hence, in this case, $\ma$ is a self-dual translation scheme, and $\BP = \BQ$.

\subsection{Bivariate homogeneous polynomials of small degree}

Recall the map $\nu_k$ defined in \Cref{Df:nu}, and let $\mn$ denote its image, i.e.\ $\mn$ is the canonical normal rational curve in $\pg(k,q)$.
As discussed in \Cref{Res:Lift}, there is a subgroup $G$ of $\PGL(k+1,q)$ isomorphic to $\PGL(2,q)$ that stabilises $\mn$.
Note that the order of $\PGL(2,q)$ equals $q(q^2-1)$.
Since $\pg(k,q)$ contains more than $q^k$ points, the number of orbits of $G$ on the set of points is more than $q^{k-3}$.
In particular, if $k > 3$, then the number of orbits grows with $q$.
However, if $k \leq 3$, then it is known that the number of orbits is constant.
This yields translation schemes defined on $(\FF_q^{k+1},+)$, and in fact, these schemes can naturally be described as translation schemes on $(\hom k,+)$.

\subsubsection{Degree 2}

We define the following relations on $\hom 2$, where $f,g \in \hom 2$ are in relation
\begin{itemize}
 \item $R_0$ if $f = g$,
 \item $R_1$ if $f-g$ has a double linear factor,
 \item $R_2$ if $f-g$ has two distinct linear factors,
 \item $R_3$ if $f-g$ is irreducible.
\end{itemize}
This is a translation scheme.
The tactical decomposition consists of the point- and hyperplane orbits of the stabiliser of a normal rational $\mn$ curve in $\pg(2,q)$, see e.g.\ \cite[Chapter 8]{Hirschfeld:79}.
Let $S_1, S_2, S_3$ be the sets of lines that intersect $\mn$ in respectively 1, 2, or 0 points.
Let $T_1 = \mn$.

If $q$ is odd, we can partition the remaining points into two sets $T_2$ and $T_3$ depending on whether they are incident with 2 or 0 lines of $S_1$ respectively.
There exists a polarity that maps each $S_i$ to $T_i$, and vice versa, hence the translation scheme is self-dual.
We find that
\[
 \BP = \BQ = \begin{pmatrix}
  1 & q^2-1 & \frac 12 q(q^2-1) & \frac 12 q(q-1)^2 \\
  1 & -1 & \frac 12 q(q-1) & -\frac 12 q(q-1) \\
  1 & q-1 & -q & 0 \\
  1 & -q-1 & 0 & q 
 \end{pmatrix}.
\]

If $q$ is even, as discussed in \Cref{Ex:Nucleus}, there is a unique point $N$ such that $S_1$ is the set of lines through $N$.
We take $T_2 = \set N$, and $T_3$ the set of the remaining points.
Then
\begin{align*}
 \BP = \begin{pmatrix}
  1 & q^2-1 & \frac 12 q(q^2-1) & \frac 12 q(q-1)^2 \\
  1 & -1 & \frac 12 q(q-1) & -\frac 12 q(q-1) \\
  1 & q^2-1 & - \frac 12 q(q+1) & - \frac 12 q(q-1) \\
  1 & -1 & - \frac 12 q & \frac 12 q
 \end{pmatrix},
 &&
 \BQ = \begin{pmatrix}
  1 & q^2-1 & q-1 & (q+1)(q-1)^2 \\
  1 & -1 & q - 1 & -q+1 \\
  1 & q-1 & - 1 & -q+1 \\
  1 & -q-1 & -1 & q+1
 \end{pmatrix}.
\end{align*}

\subsubsection{Degree 3}

We define the relations on $\hom 3$.
We say that $f,g \in \hom 3$ are in relation
\begin{itemize}
 \item $R_0$ if $f=g$, 
 \item $R_1$ if $f-g$ has a triple linear factor,
 \item $R_2$ if $f-g$ has a double linear factor, and a distinct linear factor,
 \item $R_3$ if $f-g$ has three distinct linear factors,
 \item $R_4$ if $f-g$ is the product of a linear factor and an irreducible quadratic factor,
 \item $R_5$ if $f-g$ is irreducible over $\fq$.
\end{itemize}
This yields a translation scheme, corresponding to the point- and hyperplane orbits of the stabiliser of a normal rational curve in $\pg(3,q)$.
The numbers $s_{ij}$ and $t_{ij}$ can be found in \cite[Tables 1 and 2]{Bartoli:DMP:20}.

In case $5 \leq q \equiv \eps = \pm 1 \pmod 3$, there is a polarity that swaps the point- and hyperplane-orbits, and the scheme is self-dual, see \cite[Theorem 21.1.2 (iv)]{Hirschfeld:85}.
This yields the following matrices of eigenvalues:
\[
 \BP = \BQ = \begin{pmatrix}
  1 & q^2-1  & q(q^2-1) & \frac 16 (q-1)^2q(q+1) &  \frac 12 (q-1)^2q(q+1) &  \frac 13 (q-1)^2q(q+1) \\
  1 & -1 & q(q-1) & \frac 16 q(q-1)(2q-1) & -\frac 12 q(q-1) & -\frac 13 q (q-1)(q+1) \\
  1 & q-1 & q(q-2) & - \frac 12 q(q-1) & -\frac 12 q(q-1) & 0 \\
  1 & 2q-1 & -3q & \frac 16 q (\eps q + 5) & \frac 12 q(-\eps q + 1) & \frac 13 q (\eps q - 1) \\
  1 & -1 & -q & \frac 16 q (-\eps q + 1) & \frac 12 q(\eps q + 1) & \frac 13 q (-\eps q + 1) \\
  1 & -(q+1) & 0 & \frac 16 q (\eps q - 1) & \frac 12 q(-\eps q + 1) & \frac 13 q (\eps q + 2) \\
 \end{pmatrix}
\]

If $q > 3$ is a power of $3$, the scheme is not (formally) self-dual.
The matrices of eigenvalues are given by
\begin{align*}
 \BP &= \begin{pmatrix}
 1 & q^2-1  & q(q^2-1) & \frac 16 (q-1)^2q(q+1) &  \frac 12 (q-1)^2q(q+1) &  \frac 13 (q-1)^2q(q+1) \\
 1 & -1 & q(q-1) & \frac 16 q (q-1) (2q-1) & -\frac 12 q (q-1) & -\frac 13 (q-1)q(q+1) \\
 1 & q^2-1 & -q & -\frac 16 q (q-1) & -\frac 12 q (q-1) & -\frac 13 q (q-1) \\
 1 & -1 & q(q-1) & -\frac 16 q (3q-1) & -\frac 12 q (q-1) & \frac 13 q \\
 1 & -1 & -q & \frac 16 q (q+1) & -\frac 12 q (q-1) & \frac 13 q (q+1) \\
 1 & -1 & -q & -\frac 16 q (q-1) & \frac 12 q (q+1) & -\frac 13 q(q-1) \\
 \end{pmatrix} \\
 \BQ &= \begin{pmatrix}
 1 & q^2-1 & q^2-1 & (q-1)^2(q+1) & \frac 12 (q-1)^2 q (q+1) & \frac 12 (q-1)^2 q (q+1) \\
 1 & -1 & q^2-1 & -(q-1) & -\frac 12 q (q-1) & -\frac 12 q (q-1) \\
 1 & q-1 & -1 & (q-1)^2 & -\frac 12 q (q-1) & -\frac 12 q (q-1) \\
 1 & 2q-1 & -1 & -(3q-1) & \frac 12 q (q+1) & -\frac 12 q (q-1) \\
 1 & -1 & -1 & -(q-1) & -\frac 12 q (q-1) & \frac 12 q (q+1) \\
 1 & -(q+1) & -1 & 1 & \frac 12 q (q+1) & -\frac 12 q (q-1) \\
 \end{pmatrix}
\end{align*}

One application of the eigenvalues is that we can slightly improve the upper bound of \Cref{Prop:t int} (2) for 3-intersecting families in $\hom 3$ if $q > 3$ is a power of $3$.

\begin{prop}
 \label{Prop:3 int}
 Suppose that $\mf$ is a 3-intersecting family in $\hom 3$ with $q \geq 9$ a power of $3$.
 Then $|\mf| \leq \frac 13 q^2 - \frac 29 q$.
\end{prop}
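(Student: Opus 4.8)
The plan is to recognise a $3$-intersecting family $\mf\subseteq\hom 3$ as a clique in a single relation graph of the translation scheme on $\hom 3$, and then to extract the bound from the positive semidefiniteness of the matching primitive idempotent; the special shape of $\BQ$ when $q$ is a power of $3$ is precisely what produces the leading coefficient $\tfrac13$.

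First I would observe that if $F,G\in\hom 3$ are $3$-intersecting with $F\neq G$, then the nonzero cubic form $F-G$ vanishes at at least three distinct points of $\pg(1,q)$, so, being of degree $3$, it splits into three distinct linear factors over $\fq$, i.e.\ $(F,G)\in R_3$. Thus $\mf$ is a clique in $\Gamma_{R_3}$, and consequently $\chi_{\mf}^{\top}A_i\chi_{\mf}$ equals $|\mf|$ for $i=0$, equals $|\mf|^2-|\mf|$ for $i=3$, and is zero for every other $i$. Since $E_3$ is an orthogonal projection we have $\chi_{\mf}^{\top}E_3\chi_{\mf}\geq 0$; substituting $E_3=q^{-4}\sum_{i=0}^{5}\BQ(i,3)A_i$ and reading off $\BQ(0,3)=(q-1)^2(q+1)$ and $\BQ(3,3)=-(3q-1)$ from the character table $\BQ$ for $q$ a power of $3$ displayed above, this inequality rearranges to
\[
 |\mf|\ \leq\ 1+\frac{(q-1)^2(q+1)}{3q-1}.
\]
(This is just the $j=3$ constraint of the Delsarte linear program for cliques; the other nontrivial constraints, coming from $j=2$ and $j=5$, only give $|\mf|\le q^2$, so nothing is gained there.)

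It then remains to compare the right-hand side with $\tfrac{q^2}{3}-\tfrac{2q}{9}+1=1+\tfrac{q(3q-2)}{9}$: clearing denominators, $9(q-1)^2(q+1)-q(3q-2)(3q-1)=9-11q<0$, so $|\mf|<\tfrac{q^2}{3}-\tfrac{2q}{9}+1$. Writing $q=3^m$ with $m\geq 2$, both $\tfrac{q^2}{3}=3^{2m-1}$ and $\tfrac{2q}{9}=2\cdot 3^{m-2}$ are integers, hence $\tfrac{q^2}{3}-\tfrac{2q}{9}\in\mathbb{Z}$, and since $|\mf|$ is an integer strictly below this value plus one we conclude $|\mf|\leq\tfrac{q^2}{3}-\tfrac{2q}{9}$. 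The only genuinely delicate points are choosing the idempotent $E_3$ — every other $E_j$ yields a weaker bound — and using the integrality of $\tfrac{q^2}{3}-\tfrac{2q}{9}$ to upgrade a strict inequality by a unit; everything else is bookkeeping. The restriction to $q$ a power of $3$ is essential: for $q\equiv\pm1\pmod 3$ the corresponding entry of $\BQ$ is nonnegative (when $\eps=1$) or far smaller in absolute value, and the argument no longer yields the constant $\tfrac13$.
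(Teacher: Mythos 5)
Your proposal is correct and is essentially the paper's proof: the inequality $|\mf|\leq 1+\frac{(q-1)^2(q+1)}{3q-1}$ you obtain from $\chi_{\mf}^{\top}E_3\chi_{\mf}\geq 0$ is exactly the Delsarte clique bound $1+k_3/|\tau|$ that the paper invokes (your derivation via the idempotent is the standard proof of that bound, and $\BQ(0,3)/|\BQ(3,3)|$ coincides with $k_3/|\BP(3,3)|$ by duality). Your identification of $3$-intersecting pairs with the relation $R_3$, the numerical comparison, and the integrality step recovering the floor are all sound.
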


\begin{proof}
 The Delsarte clique bound (see e.g.\ \cite[Corollary 3.7.2]{Godsil:Meagher}) says that if $R$ is a single relation in an association scheme, then a clique has it size bounded by $\frac{k}{|\tau|}+1$, if $R$ is $k$-regular, and its adjacency matrix has smallest eigenvalue $\tau$.
 For $3$-intersecting families in $\hom 3$ with $q > 3$ a power of 3, this yields the bound
 \[
  |\mf| \leq \floor{\frac{\frac 16 (q-1)^2 q(q+1)}{\frac 16 q(3q-1)} + 1}
  = \frac 13 q^2 - \frac 29 q. \qedhere
 \]
\end{proof}

\begin{rmk}
 If $q \geq 5$ is not a power of $3$, we can recover the bound $|\mf| \leq \frac{q^2-1}3 + 1$ from \Cref{Prop:t int} (2) (with only the weak inequality) using the linear programming approach of Delsarte \cite{Delsarte}.
\end{rmk}

\subsection{Trivariate homogeneous polynomials of degree 2}

We can also define a translation scheme on $\FF_q[X,Y,Z]_2$.
Say that $f,g \in \fq[X,Y,Z]_2$ are in relation
\begin{itemize}
 \item $R_0$ if $f=g$,
 \item $R_1$ if $f-g$ has a double linear factor,
 \item $R_2$ if $f-g$ has two distinct linear factors,
 \item $R_3$ if $f-g$ splits into two (conjugate) linear factors over $\FF_{q^2}$ (but not over $\fq$),
 \item $R_4$ if $f-g$ is irreducible over $\FF_{q^2}$.
\end{itemize}

The corresponding tactical decomposition is given by the point- and hyperplane-orbits of the stabiliser of the \emph{quadric Veronesean} in $\pg(5,q)$.
We refer the reader to \cite[Chapter 4]{Hirschfeld:Thas} for more details.
The parameters $s_{ij}$ and $t_{ij}$ were determined in \cite[Theorems 3.2 and 3.3]{AlnajjarineLavrauw}.

For $q$ odd, the translation scheme is self-dual because of the existence of a polarity that swaps the point- and hyperplane-orbits constituting the tactical decomposition, see e.g.\ \cite[Theorem 4.25]{Hirschfeld:Thas}.
We find that the matrices of eigenvalues $\BP = \BQ$ are given by:
\[
 \begin{pmatrix}
1 & q^3 - 1 & \frac12 q(q^2-1)(q^2+q+1) & \frac 12 q (q-1)^2 (q^2+q+1) & q^2 (q-1)^2 (q^2+q+1) \\
1 & -1 & \frac{1}{2} q (q+1)^2(q-1) & -\frac{1}{2} q(q-1)(q^2+1) & -q^2(q-1) \\
1 & q^2 - 1 & \frac{1}{2}q(q^2-2q-1) & \frac{1}{2}q (q-1)^2& -q^2(q-1) \\
1 & -(q^2 +1) & \frac{1}{2}q (q^2-1) & \frac{1}{2}q(q^2+1) & -q^2(q-1) \\
1 & -1 & -\frac{1}{2}q(q+1) & -\frac{1}{2}q(q-1) & q^2
 \end{pmatrix}.
\]

In case that $q$ is even, the scheme is no longer self-dual, and we find the following matrices of eigenvalues.

\begin{align*}
 \allowdisplaybreaks
\BP &= \begin{pmatrix}
1 & q^3 - 1 & \frac{1}{2} q(q^2-1)(q^2+q+1) &
\frac{1}{2} q(q-1)^2(q^2+q+1) & q^2(q-1)^2(q^2+q+1) \\
1 & -1 & \frac{1}{2} q(q+1)^2(q-1) &
-\frac{1}{2} q (q-1)(q^2+1) &
-q^2(q-1)\\
1 & q^3 - 1 & -\frac{1}{2}q(q+1) &
-\frac{1}{2}q(q-1) &
-q^2(q-1) \\
1 & -1 & \frac{1}{2}q(q^2-q-1) &
\frac{1}{2}q(q^2-q+1) &
-q^2(q-1) \\
1 & -1 & -\frac{1}{2}q(q+1) &
-\frac{1}{2}q(q-1) &
q^2
\end{pmatrix}, \\
\BQ &= \begin{pmatrix}
1 & q^3 - 1 & q^3 - 1 & (q+1)(q-1)^2(q^2+q+1) & q^2(q-1)^2(q^2+q+1) \\
1 & -1 & q^3 - 1 & -(q^2 - 1) & -q^2(q-1) \\
1 & q^2 - 1 & -1 & (q-1)(q^2-q-1) & -q^2(q-1) \\
1 & -(q^2 + 1) & -1 & q^3 + 1 & -q^2(q-1) \\
1 & -1 & -1 & -(q^2 - 1) & q^2
\end{pmatrix}.
\end{align*}

\begin{rmk}
 As explained in \cite{AlnajjarineLavrauw}, we can naturally represent the vector space underlying $\pg(5,q)$ as the space of symmetric $3 \times 3$ matrices over $\FF_q$.
 The point orbits consist of the projective points corresponding to rank 1 matrices, 2 orbits corresponding to rank 2 matrices, and an orbit corresponding to rank 3 matrices.
 As such, we can fuse relations 2 and 3 in the above dual translation schemes (both for $q$ odd and $q$ even) to obtain the \emph{symmetric bilinear forms} scheme over $\fq$, see e.g.\ \cite[\S 9.5 D]{bcn}.
\end{rmk}

\end{document}